\documentclass[11pt,a4paper]{article}

\usepackage{mathrsfs} 

\usepackage{graphicx}        

\usepackage[a4paper, hmargin={2.5cm,2.5cm},vmargin={3.3cm,3.3cm}]{geometry}
\usepackage{amsmath, amssymb, amsfonts, amsbsy, latexsym, color,dsfont}
\usepackage{amscd,amsxtra}
\usepackage{amsthm}
\usepackage[normalem]{ulem}

\usepackage[T1]{fontenc}
\usepackage[latin1]{inputenc}
\usepackage[english]{babel}
\usepackage{cite}
\usepackage{paralist}
\setlength{\pltopsep}{.4em}
\setlength{\plpartopsep}{.2em}
\setlength{\plitemsep}{.1em}
\setlength{\plparsep}{.05em}
\usepackage{url}
\usepackage[bf,compact,pagestyles,small]{titlesec}
\titlespacing*{\section}{0pt}{14pt}{4pt}
\titlespacing*{\subsection}{0pt}{8pt}{3pt}
\usepackage{mathdots}
\usepackage{tikz-cd}
\usetikzlibrary{matrix,arrows,decorations.pathmorphing}
\usetikzlibrary{decorations.markings}
\tikzset{
  notarrow/.style = {decoration = {markings, mark = at position 0.5 with { \node[transform shape, xscale = 0.9, yscale=0.9,font={\sf}] {X}; } }, postaction = {decorate} },
}

\usepackage{fancyhdr,lastpage}
\def\maketimestamp{\count255=\time
\divide\count255 by 60\relax
\edef\thetime{\the\count255:}%
\multiply\count255 by-60\relax
\advance\count255 by\time
\edef\thetime{\thetime\ifnum\count255<10 0\fi\the\count255}
\edef\thedate{\number\day-\ifcase\month\or Jan\or Feb\or Mar\or
             Apr\or May\or Jun\or Jul\or Aug\or Sep\or Oct\or
             Nov\or Dec\fi-\number\year}
\def\timstamp{\hbox to\hsize{\tt\hfil\thedate\hfil\thetime\hfil}}}
\maketimestamp
\pagestyle{fancy}
\fancyhf[L]{\small\textsl{Jakobsen, Lemvig}} 
\fancyhf[R]{\small\textsl{Reproducing formulas for GTI systems on LCA groups}}

 \fancyfoot[L]{}
 \fancyfoot[C]{\small  \thepage{} of \pageref{LastPage}}
 \fancyfoot[R]{\scriptsize \tt date/time:\,\thedate/\thetime}
\fancypagestyle{plain}{%
\fancyhf{} 
\fancyfoot[C]{\small \thepage{} of \pageref{LastPage}}}

\numberwithin{equation}{section}  
\allowdisplaybreaks[4]

\newtheorem{theorem}{Theorem}[section]

\newtheorem{lemma}[theorem]{Lemma}
\newtheorem{proposition}[theorem]{Proposition}
\newtheorem{corollary}[theorem]{Corollary}
\theoremstyle{definition}
\newtheorem{definition}[theorem]{Definition} 
\newtheorem{example}{Example}
\theoremstyle{remark}
\newtheorem{remark}{Remark}



 %
\DeclareMathOperator{\supp}{supp} %
\DeclareMathOperator{\diagonal}{diag} %
\DeclareMathOperator*{\esssup}{ess\,sup} %
\DeclareMathOperator*{\essinf}{ess\,inf} %

\DeclareMathOperator{\epickop}{Epick}
\newcommand{\epick}[1]{\epickop{(#1)}}
\newcommand{\epickdila}{a}
\newcommand{\gti}[1][g]{\ensuremath{\cup_{j \in J}\set{T_\gamma {#1}_p}_{\gamma \in \LG_{\! j}, p \in P_j}}} 
\newcommand{\ti}[1][g]{\ensuremath{\cup_{j \in J}\set{T_\gamma {#1}_p}_{\gamma \in \LG, p \in P_j}}} 
\newcommand{\gtiE}[1][g]{\ensuremath{\cup_{j \in J}\set{T_\gamma {#1}_{j,p}}_{\gamma \in \LG_{\! j}, p \in P_j}}} 
\newcommand{\gsi}[1][g]{\ensuremath{\cup_{j \in J}\set{T_\gamma {#1}_j}_{\gamma \in \LG_{\! j}}}} 

\newcommand{\diag}[1]{\diagonal{(#1)}}






\newcommand*{\numbersys}[1]{\ensuremath{\mathbb{#1}}}
\newcommand*{\C}{\numbersys{C}}
\newcommand*{\R}{\numbersys{R}}

\newcommand*{\Q}{\numbersys{Q}}

\newcommand*{\Z}{\numbersys{Z}}

\newcommand*{\N}{\numbersys{N}}
\newcommand*{\T}{\numbersys{T}}

\newcommand*{\cH}{\mathcal{H}}
\newcommand*{\cP}{\mathcal{P}}
\newcommand{\itvoc}[2]{\ensuremath{\left({#1},{#2}\right]}} 
\newcommand{\itvco}[2]{\ensuremath{\left[{#1},{#2}\right)}} %
 %
 %
 %
\newcommand{\abs}[1]{\ensuremath{\left\lvert#1\right\rvert}}
\newcommand{\abssmall}[1]{\ensuremath{\lvert#1\rvert}}
\newcommand{\absbig}[1]{\ensuremath{\bigl\lvert#1\bigr\rvert}}

\newcommand{\norm}[2][]{\ensuremath{\left\lVert#2\right\rVert_{#1}}}


\newcommand{\normsmall}[2][]{\ensuremath{\lVert#2\rVert_{#1}}}
\newcommand{\innerprod}[3][]{\ensuremath{\left\langle #2,#3\right\rangle_{\! #1}}}

\newcommand{\set}[1]{\ensuremath{\left\lbrace{#1}\right\rbrace}}

\newcommand{\setprop}[2]{\ensuremath{\left\lbrace{#1} : {#2}\right\rbrace}}

\newcommand{\setsmall}[1]{\ensuremath{\lbrace{#1}\rbrace}}
\newcommand{\seqsmall}[1]{\ensuremath{\lbrace{#1}\rbrace}}
\newcommand{\setpropbig}[2]{\ensuremath{\bigl\lbrace{#1} :
      {#2}\bigr\rbrace}}
\newcommand{\setpropsmall}[2]{\ensuremath{\lbrace{#1} : {#2}\rbrace}}

\newcommand{\lat}[1]{\ensuremath {#1}} 
\newcommand{\LG}{\ensuremath\lat{\Gamma}}
\newcommand{\LL}{\ensuremath\lat{\Lambda}}
\newcommand\cD{\mathcal{D}} 
\newcommand\sumstyle[2]{#1: #2} 

\usepackage{xspace}
\newcommand{\ie}{i.e.,\xspace} 
\newcommand{\eg}{e.g.,\xspace} 

\newcommand{\etc}{etc.\@\xspace}

\newcommand{\almoste}{\text{a.e.}}

\newcommand{\cfm}{\mu_M} 
\newlength{\dhatheight}
\newcommand{\doublehat}[1]{%
	\settoheight{\dhatheight}{\ensuremath{\widehat{#1}}}%
	\addtolength{\dhatheight}{-0.35ex}%
    	\widehat{\vphantom{\rule{1pt}{\dhatheight}}%
    	\smash{\hspace{-2pt} \widehat{#1}}}}
\newcommand{\ghat}{\widehat{G}}
\newcommand{\ghhat}{\doublehat{G}}
\newcommand*\oline[1]{%
  \vbox{%
    \hrule height 0.5pt
    \kern0.25ex
    \hbox{%
      \kern-0.1em
      \ifmmode#1\else\ensuremath{#1}\fi
      \kern-0.1em
    }
  }
}

{\nopagebreak\par\noindent\hbox to \hsize{\hrulefill}\par\noindent}

\usepackage{hyperref}
\hypersetup{
 pdfview={FitH},
 pdfstartview={FitH},
 pdfauthor = {Mads Sielemann Jakobsen, Jakob Lemvig},
 pdftitle = {Reproducing formulas for generalized translation invariant systems on locally compact abelian groups}, 
 pdfsubject = {harmonic analysis},
 pdfkeywords = {generalized shift invariant systems, characterizing equations, dual frames},
 pdfcreator = {LaTeX with hyperref package},
 pdfproducer = PDFlatex}

\makeatletter
\def\blfootnote{\xdef\@thefnmark{}\@footnotetext} 
\def\subjclass{\xdef\@thefnmark{}\@footnotetext}
\long\def\symbolfootnote[#1]#2{\begingroup%
\def\thefootnote{\fnsymbol{footnote}}\footnote[#1]{#2}\endgroup} 
\if@titlepage
  \renewenvironment{abstract}{%
      \titlepage
      \null\vfil
      \@beginparpenalty\@lowpenalty
      \begin{center}%
        \bfseries \abstractname
        \@endparpenalty\@M
      \end{center}}%
     {\par\vfil\null\endtitlepage}
\else
  \renewenvironment{abstract}{%
      \if@twocolumn
        \section*{\abstractname}%
      \else
        \small
        \list{}{%
          \settowidth{\labelwidth}{\textbf{\abstractname:}}
          \setlength{\leftmargin}{50pt}
          \setlength{\rightmargin}{50pt}
          \setlength{\itemindent}{\labelwidth}
          \addtolength{\itemindent}{\labelsep}
        }
        \item[\textbf{\abstractname:}]

      \fi}
      {\if@twocolumn\else\endlist\fi}
\fi
\makeatother

\begin{document}

\title{Reproducing formulas for generalized translation invariant systems on locally compact abelian groups}

\date{February 27, 2015}

 \author{Mads Sielemann Jakobsen\footnote{Technical University of Denmark, Department of Applied Mathematics and Computer Science, Matematiktorvet 303B, 2800 Kgs.\ Lyngby, Denmark, E-mail: \protect\url{msja@dtu.dk}}
, Jakob Lemvig\footnote{Technical University of Denmark, Department of Applied Mathematics and Computer Science, Matematiktorvet 303B, 2800 Kgs.\ Lyngby, Denmark, E-mail: \protect\url{jakle@dtu.dk}}}

 \blfootnote{2010 {\it Mathematics Subject Classification.} Primary: 42C15, 43A32, 43A70, Secondary: 43A60, 46C05.} \blfootnote{{\it Key
     words and phrases.} continuous frame, dual frames, dual generators, g-frame, Gabor frame, generalized shift invariant system, generalized translation invariant system, LCA group, Parseval frame, wavelet frame}

\maketitle 

\thispagestyle{plain}
\begin{abstract} 
  In this paper we connect the well established discrete frame theory of generalized shift invariant systems to a continuous frame theory. To do so, we let $\LG_{\!j}$, $j \in J$, be a countable family of closed, co-compact subgroups of a second countable locally compact abelian group $G$ and study systems of the form $\cup_{j \in J}\{ g_{j,p}(\cdot - \gamma)\}_{\gamma\in \LG_{\!j}, p \in P_j}$ with generators $g_{j,p}$ in $L^2(G)$ and with each $P_j$ being a countable or an uncountable index set. We refer to systems of this form as generalized translation invariant (GTI) systems. Many of the familiar transforms, \eg the wavelet, shearlet and Gabor transform, both their discrete and continuous variants, are GTI systems. Under a technical $\alpha$ local integrability condition ($\alpha$-LIC) we characterize when GTI systems constitute tight and dual frames that yield reproducing formulas for $L^2(G)$. This generalizes results on generalized shift invariant systems, where each $P_j$ is assumed to be countable and each $\LG_{\!j}$ is a uniform lattice in $G$, to the case of uncountably many generators and (not necessarily discrete) closed, co-compact subgroups. Furthermore, even in the case of uniform lattices $\LG_{\! j}$, our characterizations improve known results since the class of GTI systems satisfying the $\alpha$-LIC is strictly larger than the class of GTI systems satisfying the previously used local integrability condition. As an application of our characterization results, we obtain new characterizations of translation invariant continuous frames and Gabor frames for $L^2(G)$.  In addition, we will see that the admissibility conditions for the continuous and discrete wavelet and Gabor transform in $L^2(\R^n)$ are special cases of the same general characterizing equations.
\end{abstract}

\section{Introduction}
\label{sec:introduction}
In harmonic analysis one is often interested in determining conditions on generators of function
systems, \eg Gabor and wavelet systems, that allow for reconstruction of any function in a given
class of functions from its associated transform via a reproducing formula. The work of Hern\'andez,
Labate, and Weiss~\cite{MR1916862} and of Ron and Shen~\cite{MR2132766} on generalized shift
invariant systems in $L^2(\R^n)$ presented a unified theory for many of the familiar \emph{discrete}
transforms, most notably the Gabor and the wavelet transform. The generalized shift invariant
systems are collections of functions of the form $\gsi$, where $J$ is a countable index set,
$T_\gamma$ denotes translation by $\gamma$, $\LG_{\!j}$ a full-rank lattice in $\R^n$, and
$\{g_j\}_{j \in J}$ a subset of $L^2(\R^n)$. Here, the word ``shift'' is used since the translations
are discrete and the word ``generalized'' since the shift lattices $\LG_{\!j}$ are allowed to change
with the parameter $j\in J$. The main result of Hern\'andez, Labate, and Weiss~\cite{MR1916862} is a
characterization, by so-called $t_\alpha$-equations, of all functions $g_j$ that give rise to
isometric transforms, called Parseval frames in frame theory.

The goal of this work is to connect the discrete transform theory of generalized shift invariant
systems to a continuous/integral transform theory. In doing so, the scope of the ``unified
approach'' started in \cite{MR1916862,MR2132766} will be vastly extended.  What more is, this new
theory will cover ``intermediate'' steps, the semi-continuous transforms, and we will do so in a
very general setting of square integrable functions on locally compact abelian groups. In
particular, we recover the usual characterization results for discrete and continuous Gabor and
wavelet systems as special cases. For discrete wavelets in $L^2(\R)$ with dyadic dilation, this
result was obtained in 1995, independently by Gripenberg~\cite{MR1338828} and Wang~\cite{MR2692675},
and it can be stated as follows. Define the translation operator $T_bf(x)=f(x-b)$ and dilation
operator $D_af(x)=\abs{a}^{-1/2}f(x/a)$ for $b\in \R, a\neq 0$. The discrete wavelet system
$\seqsmall{T_{2^{j} k}D_{2^{j}}\psi}_{j,k\in \Z}$ generated by $\psi \in L^2(\R)$ is indeed a
generalized shift invariant system with $J=\Z$, $\LG_{\!j}=2^{j}\Z$, and $g_j=D_{2^{j}} \psi$. Now,
the linear operator $W_d$ defined by
\[ 
W_d : L^2(\R) \to \ell^2(\Z^2), \quad W_d f(j,k) = \innerprod{f}{T_{2^{j} k}D_{2^{j}}\psi} 
\] 
is isometric if, and only if, for all $\alpha \in \bigcup_{j\in \Z} 2^{-j}\Z $, the following
$t_{\alpha}$-equations 
hold:
\begin{equation}
t_\alpha:= \sum_{j\in \Z \, : \, \alpha \in 2^{-j}\Z} \widehat \psi(2^j \xi)
\overline{\widehat\psi(2^j(\xi+\alpha))} = \delta_{\alpha,0} \quad \text{for a.e. } \xi \in \widehat{\R},
\label{eq:t-alpha-discrete-wavelet-intro}
\end{equation}
where $\widehat{\R}$ denotes the Fourier domain. In the language of frame theory, we say that
generators $\psi \in L^2(\R)$ of discrete Parseval wavelet frames have been characterized by
$t_\alpha$-equations.

Calder\'on~\cite{MR0167830} discovered in 1964 that any function $\psi \in L^2(\R)$ satisfying the
Calder\'on admissibility condition
\begin{equation}
\int_{\R \setminus \set{0}} \frac{\abssmall{\widehat\psi(a\xi)}^2}{\abs{a}} \, da =1  \quad \text{for a.e. } \xi \in \widehat{\R}
\label{eq:t-alpha-cont-wavelet-intro}
\end{equation}
leads to reproducing formulas for the \emph{continuous} wavelet transform. To be precise, the
linear operator $W_c$ defined by
\[ 
W_c : L^2(\R) \to L^2\big(\R\!\setminus\!\set{0}\times \R,\tfrac{dadb}{a^2}\big), \quad W_c f(a,b) =
\innerprod{f}{T_{b}D_{a}\psi}
\] 
is isometric if, and only if, the Calder\'on admissibility condition holds.  We will see
that the Calder\'on admissibility condition is nothing but the $t_\alpha$-equation (there is only
one!) for the continuous wavelet system. Similar results hold for the Gabor case; here the
continuous transform is usually called the short-time Fourier transform. Actually, the theory is not
only applicable to the Gabor and wavelet setting, but to a very large class of systems of functions
including shearlet and wave packet systems, which we shall call \emph{generalized translation
  invariant} systems. We refer the reader to the classical texts
\cite{gr01,Daubechies:1992:TLW:130655,MR507783} and the recent book \cite{MR2896273} for
introductions to the specific cases of Gabor, wavelet, shearlet and wave packet analysis.

In~\cite{MR2283810}, Kutyniok and Labate generalized the results of Hern\'andez, Labate, and Weiss
to generalized shift invariant systems $\gsi$ in $L^2(G)$, where $G$ is a second countable locally
compact abelian group and $\LG_{\!j}$ is a family of uniform lattices (\ie $\LG_{\!j}$ is a discrete
subgroup and the quotient group $G/\LG_{\!j}$ is compact) indexed by a countable set $J$. The main
goal of the present paper is to develop the corresponding theory for \emph{semi-continuous} and
\emph{continuous} frames in $L^2(G)$. In order to achieve this, we will allow non-discrete
translation groups $\LG_{\!j}$, and we will allow for each translation group to have uncountable
many generators, indexed by some index set $P_j$, $j\in J$.  We say that the corresponding family
$\gtiE$ in $L^2(G)$ is a \emph{generalized translation invariant system}. To be precise, we will,
for each $j \in J$, take $P_j$ to be a $\sigma$-finite measure space with measure $\mu_{P_j}$ and
$\LG_{\!j}$ to be closed, co-compact (\ie the quotient group $G/\LG_{\!j}$ is compact) subgroups. We
mention that \emph{any} locally compact abelian group has a co-compact subgroup, namely the group
itself. On the other hand, there exist groups that do not contain uniform lattices, \eg the $p$-adic
numbers. Thus, the theory of generalized translation invariant systems is applicable to a larger
class of locally compact abelian groups than the theory of generalized shift invariant systems.
 
The two wavelet cases described above fit our framework. The discrete wavelet system can be written
as $\cup_{j \in \Z}\{T_\gamma (D_{2^{j}}\psi)\}_{\gamma \in 2^{j}\Z}$, so we see that $P_j$ is a
singleton and $\mu_{P_j}$ a weighted counting measure for each $j \in J = \Z$, and that there are
countably many different (discrete) $\LG_{\!j}$. For the continuous wavelet system on the form
$\set{T_\gamma(D_p \psi)}_{\gamma \in \R,p \in \R\setminus\{0\}}$, we have that $J$ is a singleton,
\eg $\set{{j_0}}$ since there is only one translation subgroup $\LG_{\!{j_0}}=\R$. On the other
hand, here ${P_{j_0}}$ is uncountable and $\mu_{P_{j_0}}$ a weighted Lebesgue measure.  We stress
that our setup can handle countable many (distinct) $\LG_{\!j}$ and countable many ${P_j}$, each
being uncountable.

The characterization results in \cite{MR1916862,MR2283810} rely on a technical condition on the
generators and the translation lattices, the so-called \emph{local integrability condition}. This
condition is straightforward to formulate for generalized translation invariant systems, however, we
will replace it by a strictly weaker condition, termed \emph{$\alpha$ local integrability
  condition}. Therefore, even for generalized \emph{shift} invariant systems in the euclidean
setting, our work extends the characterization results by Hern\'andez, Labate, and
Weiss~\cite{MR1916862}. Under the $\alpha$ local integrability condition, we show in
Theorem~\ref{th:parseval-char-LCA} that $\gtiE$ is a Parseval frame for $L^2(G)$, that is, the
associated transform is isometric if, and only if,
\begin{equation*}
t_{\alpha} := \sum_{j \in J \, : \, \alpha \in \Gamma_{\!j}^{\perp}} \int_{P_j}
\overline{\hat{g}_{j,p}(\omega)} \hat{g}_{j,p}(\omega+\alpha ) \, d\mu_{{P_j}}(p) =
\delta_{\alpha,0} \quad \text{a.e. } \omega \in \ghat
\end{equation*}
for every $\alpha \in \cup_{j\in J} \LG_{\!j}^{\perp}$, where $\LG_{\!j}^{\perp}=\big\{\omega \in
\widehat{G} \, : \, \omega(x) = 0 \text{ for all } x\in \LG_{\!j}\big\}$ denotes the annihilator of
$\LG_{\!j}$. Now, returning to the two main examples of this introduction, the discrete and
continuous wavelet transform, we see why the number of the $t_\alpha$-equations in
\eqref{eq:t-alpha-discrete-wavelet-intro} and \eqref{eq:t-alpha-cont-wavelet-intro} are so
different.  In the discrete case the corresponding union of the annihilators of the translation
groups is $\cup_{j\in \Z} 2^{-j}\Z$, while in the continuous case the annihilator of $\R$ is simply
$\set{0}$, which corresponds to only one $t_\alpha$-equation ($\alpha=0$).

Finally, as Kutyniok and Labate~\cite{MR2283810} restrict their attention to Parseval frames, there
are currently no characterization results available for \emph{dual} (discrete) frames in the setting
of locally compact abelian groups. Hence, one additional objective of this paper is to prove
characterizing equations for \emph{dual} generalized translation invariant frames to remedy this
situation.

For a related study of reproducing formulas from a purely group representation theoretical point of
view, we refer to the work of F\"uhr~\cite{MR2652610}, and De Mari, De Vito \cite{MR3008561}, and
the references therein.

\medskip

\noindent The paper is organized as follows. We recall some basic theory about locally compact
abelian groups and introduce the generalized translation invariant systems in Section~\ref{sec:LCA}
and~\ref{sec:gener-transl-invar}, respectively. Additionally, in Section \ref{sec:frame-theory} we
give a short introduction to the theory of continuous frames and g-frames. In Section \ref{sec:gsi}
we present our main characterization result for dual generalized translation invariant frames
(Theorem~\ref{th:2001e}) and, as corollary, then for Parseval frames
(Theorem~\ref{th:parseval-char-LCA}). In Section~\ref{sec:suff-cond-local} and
\ref{sec:exampl-local-integr} we relate several conditions used in our main results. Finally, we
consider the special case of translation invariant systems and apply our characterization results on
concrete groups and to concrete examples in Sections \ref{sec:special-syst} and
\ref{sec:applications}. Specifically, we consider discrete and continuous wavelet systems in
$L^2(\R^n)$, shearlets in $L^2(\R^2)$, discrete, semi-continuous and continuous Gabor frames on LCA
groups and GTI systems over the $p$-adic integers and numbers.

During the final stages of this project, we realized that Bownik and Ross~\cite{BowRos2014} have
completed a related investigation. As they consider and characterize the structure of translation
invariant subspaces on locally compact abelian groups, their results do not overlap with our results
in any way. However, they do consider translations along a closed, co-compact subgroup. We adopt
their terminology of \emph{translation} invariance, in place of shift invariance, to emphasize the
fact that $\LG_{\!j}$ need not be discrete.

\section{Preliminaries} \label{sec:preliminaries}

In the following sections we set up notation and recall some useful results from Fourier analysis on
locally compact abelian (LCA) groups and continuous frame theory. Furthermore, we will prove two
important lemmas, Lemma~\ref{le:1902a} and \ref{thm:unitary-equi-duals}.

\subsection{Fourier analysis on locally compact abelian groups} \label{sec:LCA}

Throughout this paper $G$ will denote a second countable locally compact abelian group.  We note
that the following statements are equivalent: (i) $G$ is second countable, (ii) $L^2(G)$ is
separable, (iii) $G$ is metrizable and $\sigma$-compact. Note that the metric on $G$ can be chosen
to be translation invariant.

To $G$ we associate its dual group $\ghat$ consisting of all characters, \ie all continuous
homomorphisms from $G$ into the torus $\T \cong \setprop{z\in\C}{ \abs{z} =1}$. Under pointwise
multiplication $\ghat$ is also a locally compact abelian group. We will use addition and
multiplication as group operation in $G$ and $\ghat$, respectively. Note that in the introduction we
used addition as group operation in $\ghat$.  By the Pontryagin duality theorem, the dual group of
$\ghat$ is isomorphic to $G$ as a topological group, \ie $\ghhat \cong G$.  We recall the well-known
facts that if $G$ is discrete, then $\ghat$ is compact, and vice versa.

We denote the Haar measure on $G$ by $\mu_G$. The (left) Haar measure on any locally compact group
is unique up to a positive constant. From $\mu_G$ we define $L^1(G)$ and the Hilbert space $L^2(G)$
over the complex field in the usual way.

For functions $f\in L^1(G)$ we define the Fourier transform
\[ 
\mathcal Ff(\omega) = \hat f(\omega) = \int_{G} f(x)  \overline{\omega(x)} \, d\mu_G(x), \quad
\omega \in \ghat.
\] 
If $f\in L^1(G), \hat{f} \in L^1(\ghat)$, and the measure on $G$ and $\ghat$ are normalized so that
the Plancherel theorem holds (see \cite[(31.1)]{MR0262773}), the function $f$ can be recovered from
$\hat{f}$ by the inverse Fourier transform
\[
f(x) = \mathcal F^{-1}\hat{f}(x) = \int_{\ghat} \hat{f}(\omega) \omega(x) \, d\mu_{\ghat}(\omega),
\quad x\in G.
\]
From now on we always assume that the measure on a group $\mu_G$ and its dual group $\mu_{\ghat}$
are normalized this way, and we refer to them as \emph{dual measures}.  As in the classical Fourier
analysis $\mathcal F$ can be extended from $L^1(G) \cap L^2(G)$ to an isometric isomorphism between
$L^2(G)$ and $L^2(\ghat)$.

On any locally compact abelian group $G$, we define the following two linear operators.  For $a \in G$, the
operator $T_{a}$, called \emph{translation} by $a$, is defined by
\[
 T_{a}:L^2(G)\to L^2(G), \ (T_{a}f)(x) = f(x-a), \quad x\in G.
\]
For $\chi\in \ghat$, the
operator $E_{\chi}$, called \emph{modulation} by $\chi$, is defined by
\[ 
E_{\chi}:L^2(G)\to L^2(G), \ (E_{\chi}f)(x) = \chi(x) f(x), \quad x\in G.
\]
Together with the Fourier transform $\mathcal F$, the two operators $E_{\chi}$ and $T_a$ share the
following commutator relations: $T_aE_{\chi} = \overline{\chi(a)} E_{\chi}T_a , \ \mathcal F T_a =
E_{a^{-1}} \mathcal F$, and $ \mathcal F E_{\chi} = T_{\chi} \mathcal F.$

For a subgroup $H$ of an LCA group $G$, we define its annihilator as
\[ 
H^{\perp} = \{ \omega \in \ghat \, : \, \omega(x) = 1 \ \text{for all} \ x\in H \}.
\] 
The annihilator $H^{\perp}$ is a closed subgroup in $\ghat$, and if $H$ is closed, then $\widehat H
\cong \ghat / H^{\perp}$ and $\widehat{G/H} \cong H^{\perp}$.

We will repeatedly use Weil's formula; it relates integrable functions over $G$ with integrable
functions on the quotient space $G/H$ when $H$ is a closed  subgroup of $G$. We mention the
following results concerning Weil's formula \cite{MR1802924}.

\begin{theorem} \label{th:1802f} Let $H$ be a closed subgroup of $G$. Let $\pi_H:G\to G/H, \
  \pi_H(x) = x+H$ be the \textit{canonical map} from $G$ onto $G/H$. If $f\in L^1(G)$, then the
  following holds:
\begin{enumerate}[(i)]
\item The function $\dot x \mapsto \int_H f(x+h) \, d\mu_{H}(h)$, $\dot x = \pi_H(x)$ defined almost everywhere on $G/H$, is integrable.
\item (Weil's formula) Let two of the Haar measures on $G, H$ and $G/H$ be given, then the third can be normalized such that 
\begin{equation}
\label{eq:1802a}  \int_G f(x) \, d\mu_{G}(x) = \int_{G/H} \int_H f(x+h) \, d\mu_{H}(h) \, d\mu_{G/H}(\dot x) .
\end{equation} 
\item If \eqref{eq:1802a} holds, then the respective dual measures on $\widehat G, H^{\perp}\cong
  \widehat{G/H}$, $\ghat / H^{\perp} \cong \widehat H$ satisfy
\begin{equation}
  \label{eq:1802b} 
  \int_{\widehat G} \hat{f}(\omega) \, d\mu_{\ghat}(\omega) = \int_{\ghat/H^{\perp}} \int_{H^{\perp}} \hat{f}(\omega \gamma) \, d\mu_{H^{\perp}}(\gamma )\, d\mu_{\ghat/H^{\perp}}(\dot \omega).
\end{equation}
\end{enumerate}
\end{theorem}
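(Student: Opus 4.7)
The plan is to prove parts (i) and (ii) together by constructing the Haar measure $\mu_{G/H}$ via the Riesz--Markov--Kakutani representation theorem applied to a suitable functional on $C_c(G/H)$, and then to deduce (iii) by applying (ii) to the Pontryagin-dual pair $(\ghat, H^{\perp})$ and matching normalizations through the Plancherel theorem.

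For (i) and (ii), I would first restrict to $f \in C_c(G)$ and define the partial-average operator $T_H f(\dot x) \defined \int_H f(x+h)\, d\mu_H(h)$; translation invariance of $\mu_H$ makes this independent of the coset representative, and uniform continuity of $f$ together with compactness of $\supp f$ yields $T_H f \in C_c(G/H)$. The central geometric step is surjectivity of $T_H\colon C_c(G)\to C_c(G/H)$: for any $g\in C_c(G/H)$ one constructs $\phi\in C_c(G)$ with $T_H\phi$ strictly positive on $\supp g$ (via an Urysohn-type argument in the locally compact space $G$ together with the openness of $\pi_H$), and then $x\mapsto \phi(x)\,g(\pi_H(x))/(T_H\phi)(\pi_H(x))$ is a $C_c(G)$-preimage of $g$. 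The functional $\Lambda(T_H f) \defined \int_G f\, d\mu_G$ on $C_c(G/H)$ is then well-defined (the vanishing of $\int_G f$ on the kernel of $T_H$ being a standard consequence of Fubini and translation invariance of $\mu_G$), positive, and translation invariant, so by Riesz--Markov and uniqueness of Haar measure it equals integration against a Haar measure $\mu_{G/H}$, proving \eqref{eq:1802a} for $f\in C_c(G)$. Applying this first to $\abs{f}$ gives the integrability claim (i) for general $f\in L^1(G)$, and a standard density/dominated convergence argument then extends \eqref{eq:1802a} to all of $L^1(G)$.

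For (iii), I would apply part (ii) to the LCA group $\ghat$ with closed subgroup $H^{\perp}$, producing a Haar measure $\nu$ on $\ghat/H^{\perp}$ paired with $\mu_{\ghat}$ and $\mu_{H^{\perp}}$ by Weil's formula. Pontryagin duality supplies canonical topological isomorphisms $\widehat{G/H}\cong H^{\perp}$ and $\widehat H\cong \ghat/H^{\perp}$, under which $\mu_{H^{\perp}}$ and $\nu$ are transported to candidate Haar measures on $\widehat{G/H}$ and $\widehat H$; it remains to identify these with the Plancherel duals of $\mu_{G/H}$ and $\mu_H$. This is pinned down by applying the Plancherel theorem on $G$ to convolutions $f=g\ast \tilde g$ with $g\in C_c(G)\cap L^2(G)$ and expanding both $\int_G \abs{g}^2\, d\mu_G$ and $\int_{\ghat} \abs{\hat g}^2\, d\mu_{\ghat}$ via the Weil formulas already established for $H$ and $H^{\perp}$; uniqueness of Haar measure then forces the transported measures to coincide with the intended duals. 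The main obstacle is exactly this normalization compatibility in (iii): none of the four measures on $H$, $G/H$, $H^{\perp}$, $\ghat/H^{\perp}$ is intrinsically normalized, and the theorem's content is that the constraints imposed by Weil on both sides of the Fourier transform, together with the dual-measure normalization on $G$ and $\ghat$, are mutually consistent and rigidly determine the remaining measure.
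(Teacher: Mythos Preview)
Your proposal is a correct outline of the standard proof of Weil's formula and its compatibility with Pontryagin duality, following the classical route via the Riesz--Markov representation of the induced functional on $C_c(G/H)$ and a Plancherel-based normalization argument for the dual side. However, the paper does not prove this theorem at all: it is stated as a quoted result from Reiter and Stegeman's book (reference \cite{MR1802924} in the paper), introduced by the sentence ``We mention the following results concerning Weil's formula.'' There is therefore nothing to compare your argument against.

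That said, your sketch is sound and essentially the argument found in standard references such as Folland or Reiter--Stegeman. One small point worth tightening if you were to write this out in full: in part (iii), the step where you ``expand both $\int_G |g|^2\, d\mu_G$ and $\int_{\ghat} |\hat g|^2\, d\mu_{\ghat}$ via the Weil formulas'' and then invoke uniqueness of Haar measure to pin down the normalization is the right idea but is slightly compressed; the cleanest way to carry it through is to compute the Fourier transform of $T_H f$ on $G/H$ directly in terms of $\hat f$ restricted to $H^\perp$, which makes the Plancherel identities on $G$, $H$, and $G/H$ lock together without ambiguity. But this is a matter of exposition, not a gap.
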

\begin{remark} \label{rem:1802f} 
  Since a Haar measure and its dual are chosen so that the Plancherel theorem holds we have the
  following uniqueness result: If two of the measures on $G,H,G/H,\ghat, H^{\perp}$ and $\widehat G
  / H^{\perp}$ are given, and these two are not dual measures, by requiring Weil's formulas
  \eqref{eq:1802a} and \eqref{eq:1802b}, all other measures are uniquely determined.
\end{remark}
For more information on harmonic analysis on locally compact abelian groups, we refer the reader to
the classical books \cite{MR1397028,MR0156915,MR0262773,MR1802924}. 

\smallskip

For a Borel set $E \subset \ghat$ with $\mu_{\ghat}{(\oline{E})}=0$, we define:
\begin{equation}
  \label{eq:def-D}
  \cD = \big\{ f \in L^2 (G) \, : \, \hat{f} \in L^\infty(\ghat) \text{ and }
    \supp \hat{f} \text{ is compact in } \ghat \setminus E \big\}.
\end{equation}
It is not difficult to show that $\cD$ is dense in $L^2(G)$ exactly when
$\mu_{\ghat}{(\oline{E})}=0$. We will frequently prove our results on $\cD$ and extend by a density
argument. The role of the set $E$ is to allow for ``blind spots'' of transforms -- a term coined by
F\"{u}hr~\cite{Fuhr2013}.  We will let $E$ be an unspecified set satisfying
$\mu_{\ghat}{(\oline{E})}=0$; the specific choice of $E$ depends on the application, \eg in the
Gabor and wavelet case~\cite{MR1916862} one would usually take $E=\emptyset$ and $E=\set{0}$,
respectively.

The following result relies on Weil's formula and will play an important part of the proofs in
Section~\ref{sec:gsi}.
\begin{lemma} \label{le:1902a} Let $H$ be a closed subgroup of an LCA group $G$ with Haar measure
  $\mu_H$. Suppose that $f_1,f_2 \in \mathcal D$ and $\varphi,\psi\in L^2(G)$. Then
  \[ 
\int_H \langle f_1, T_h\varphi\rangle \langle T_h\psi,f_2\rangle \, d\mu_H(h) = \int_{\ghat}
  \int_{H^{\perp}} \hat{f_1}(\omega) \overline{\hat{f}_2(\omega\alpha)}
  \overline{\hat{\varphi}(\omega)} \hat{\psi} (\omega\alpha) \, d\mu_{H^{\perp}}(\alpha) \,
  d\mu_{\ghat}(\omega).
  \]
\end{lemma}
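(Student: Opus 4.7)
The plan is to reduce both sides to a single Plancherel-type identity on $H$ by passing through the canonical isomorphism $\widehat{G}/H^{\perp} \cong \widehat{H}$.

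\textbf{Step 1 (Fourier rewrite).} Using Plancherel on $G$ and $\widehat{T_h \phi}(\omega) = \overline{\omega(h)}\,\hat\phi(\omega)$, set $F_i(x) := \langle f_i, T_x \phi_i \rangle$ with $\phi_1 = \varphi$, $\phi_2 = \psi$. Then formally $\hat F_i = \hat f_i\, \overline{\hat \phi_i}$, and since $f_i \in \cD$ makes $\hat f_i$ bounded with compact support, $\hat F_i \in L^1(\widehat G) \cap L^2(\widehat G)$, so $F_i \in C_0(G) \cap L^2(G)$. The LHS is then $\int_H F_1(h)\overline{F_2(h)}\,d\mu_H(h)$.

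\textbf{Step 2 (Peel the RHS via Weil's formula).} After the substitution $\omega_2 = \omega\alpha$ for $\alpha \in H^{\perp}$, the RHS reads $\int_{\widehat G}\int_{H^\perp} \hat F_1(\omega)\overline{\hat F_2(\omega\alpha)}\,d\mu_{H^\perp}(\alpha)\,d\mu_{\widehat G}(\omega)$. I would apply Weil's formula (Theorem~\ref{th:1802f}) to the outer integral with the closed subgroup $H^\perp \subset \widehat G$, and then substitute $\beta = \gamma\alpha$ in the resulting double $H^\perp$-integral so that the variables decouple, arriving at
\[
\text{RHS} \;=\; \int_{\widehat G / H^\perp} A_1(\dot\omega)\, \overline{A_2(\dot\omega)}\, d\mu_{\widehat G/H^\perp}(\dot\omega),\qquad A_i(\dot\omega) := \int_{H^\perp} \hat F_i(\omega\gamma)\,d\mu_{H^\perp}(\gamma).
\]

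\textbf{Step 3 (Identify $A_i$, then Plancherel on $H$).} Applying Weil's formula on $\widehat G$ inside the inverse Fourier representation of $F_i$, and using that $\gamma(h)=1$ for $\gamma\in H^\perp$, $h\in H$, one gets $F_i|_H(h) = \int_{\widehat G/H^\perp} A_i(\dot\omega)\dot\omega(h)\,d\mu_{\widehat G/H^\perp}(\dot\omega)$. Via the isomorphism $\widehat G / H^\perp \cong \widehat H$ with matching dual measures, this identifies $A_i$ as the $\widehat H$-Fourier-inversion density of $F_i|_H$. Plancherel on $H$ then converts the last integral of Step~2 into $\int_H F_1(h)\overline{F_2(h)}\,d\mu_H(h)$, which is the LHS.

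\textbf{Main obstacle.} The only delicate point is integrability: I need absolute convergence of the double integrals to justify Fubini and the substitution, and $A_i \in L^2(\widehat G/H^\perp)$ to invoke Plancherel on $H$. Both follow from a Cauchy--Schwarz estimate $|A_i(\dot\omega)|^2 \le C\int_{H^\perp}|\hat F_i(\omega\gamma)|^2\,d\mu_{H^\perp}(\gamma)$, where $C$ is a uniform bound on $\mu_{H^\perp}(\{\gamma \in H^\perp : \omega\gamma \in \supp \hat f_i\})$; this bound is finite and independent of the coset $\dot\omega$ because $\supp \hat f_i$ is compact and $H^\perp$ is closed. Integrating in $\dot\omega$ yields $\|A_i\|_2^2 \le C\|\hat F_i\|_2^2 < \infty$. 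This is precisely where the membership $f_i \in \cD$ is essential, and it is the technical heart of the argument.
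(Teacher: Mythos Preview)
Your proposal is correct and follows essentially the same route as the paper: both arguments pass to the Fourier side via Plancherel on $G$, apply Weil's formula on $\widehat{G}$ with the subgroup $H^{\perp}$ to introduce the periodization $A_i(\dot\omega)=\int_{H^\perp}\hat F_i(\omega\gamma)\,d\mu_{H^\perp}(\gamma)$ (which the paper writes as a bracket $[\hat f_i,\hat\phi_i](\omega,H^\perp)$), recognize this as the inverse Fourier transform on $\widehat H\cong\widehat G/H^\perp$ of $F_i|_H$, and then invoke Plancherel on $H$. The only cosmetic difference is that the paper proceeds linearly from the left-hand side to the right, whereas you meet in the middle; your explicit Cauchy--Schwarz justification that $A_i\in L^2(\widehat G/H^\perp)$ using the compact support of $\hat f_i$ is a detail the paper leaves implicit.
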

\begin{proof}
  Let $h\in H$. An application of the Plancherel theorem together with Weil's formula yields
\begin{align*}
  \langle f_1, T_h \varphi\rangle & = \langle \hat{f_1}, \widehat{T_h\varphi} \rangle = \langle \hat{f}_1, E_{-h} \hat{\varphi}\rangle = \int_{\ghat} \hat{f_1}(\omega) \overline{\hat{\varphi}(\omega)} \omega(h) \, d\mu_{\ghat} (\omega) \\
  & = \int_{\ghat / H^{\perp}} \int_{H^{\perp}} \hat{f}_1(\omega \gamma)
  \overline{\hat{\varphi}(\omega \gamma)} \omega(h) \gamma(h) \, d\mu_{H^{\perp}} (\gamma) \,
  d\mu_{\ghat/H^{\perp}} (\dot{\omega}) \\ & = \int_{\widehat H} \Big( \omega (h) \int_{H^{\perp}}
  \hat{f}_1(\omega \gamma) \overline{\hat{\varphi}(\omega \gamma)} \, d\mu_{H^{\perp}} (\gamma)
  \Big) \, d\mu_{\widehat H} (\omega),
\end{align*}
where we tacitly used that $\ghat/ H^{\perp}\cong \widehat{ H}$.  A similar calculation can be done
for $\langle T_h\psi,f_2\rangle$. To ease notation, we define $[\hat{f},
\hat{\varphi}](\omega,H^{\perp})=\int_{H^{\perp}} \hat{f}(\omega \gamma)
\overline{\hat{\varphi}(\omega \gamma)} \, d\mu_{H^{\perp}} (\gamma)$ for $f\in\mathcal D$.  Again,
by the Plancherel theorem and Weil's formula we have
\begin{align*}
  & \quad \ \int_H \langle f_1, T_h\varphi\rangle \langle T_h\psi,f_2\rangle \, d\mu_H(h) \\ & = \int_H \bigg( \int_{ \widehat H} \omega (h)  [\hat{f_1}, \hat{\varphi}](\omega,H^{\perp}) \ d\mu_{\widehat H}(\omega) \bigg) \overline{\bigg( \int_{ \widehat H} \omega (h) [\hat{f_2}, \hat{\psi}](\omega,H^{\perp}) \ d\mu_{\widehat H}(\omega) \bigg)}  \, d\mu_H(h)\\
  & = \Big\langle \mathcal F^{-1} [\hat{f}_1, \hat{\varphi}](\,\cdot\, ,H^{\perp})  ,  \mathcal F^{-1} [\hat{f_2}, \hat{\psi}](\,\cdot\, ,H^{\perp}) \Big\rangle_{L^2(H)} \\
  & = \Big\langle [\hat{f_1}, \hat{\varphi}](\,\cdot\,,H^{\perp})  ,   [\hat{f_2}, \hat{\psi}](\,\cdot\,,H^{\perp}) \Big\rangle_{L^2(\widehat H)} \\
  & = \int_{\ghat / H^{\perp}} \Big(\int_{H^{\perp}} \hat{f_1}( \omega \gamma)
  \overline{\hat{\varphi}(\omega \gamma)} \, d\mu_{H^{\perp}}(\gamma) \Big) \overline{\Big(
    \int_{H^{\perp}} \hat{f_2}( \omega \gamma)
    \overline{\hat{\psi}(\omega \gamma)} \, d\mu_{H^{\perp}}(\gamma) \Big)} \, d\mu_{\ghat/H^{\perp}}(\dot{\omega}) \\
  & = \int_{\ghat / H^{\perp}} \Big[ \int_{H^{\perp}} \hat{f_1}( \omega \gamma)
  \overline{\hat{\varphi}(\omega \gamma)} \Big( \int_{H^{\perp}} \overline{\hat{f_2}( \omega \beta)}
  \hat{\psi}(\omega \beta)
  \, d\mu_{H^{\perp}}(\beta) \Big) \, d\mu_{H^{\perp}}(\gamma) \Big] \, d\mu_{\ghat/H^{\perp}}(\dot{\omega}) \\
  & = \int_{\ghat / H^{\perp}} \Big[ \int_{H^{\perp}} \hat{f_1}( \omega \gamma)
  \overline{\hat{\varphi}(\omega \gamma)} \Big( \int_{H^{\perp}} \overline{\hat{f_2}( \omega \gamma
    \alpha)} \hat{\psi}(\omega \gamma \alpha)
  \, d\mu_{H^{\perp}} (\alpha) \Big) \, d\mu_{H^{\perp}} (\gamma) \Big] \, d\mu_{\ghat/H^{\perp}}(\dot{\omega}) \\
  & = \int_{\ghat} \hat{f_1}( \omega ) \overline{\hat{\varphi}(\omega )} \Big( \int_{H^{\perp}}
  \overline{\hat{f_2}( \omega \alpha)}
  \hat{\psi}(\omega \alpha) \, d\mu_{H^{\perp}}(\alpha) \Big) \, d\mu_{\ghat}(\omega) \\
  & = \int_{\ghat} \int_{H^{\perp}} \hat{f_1}( \omega ) \overline{\hat{f_2}( \omega \alpha)}
  \overline{\hat{\varphi}(\omega )} \hat{\psi}(\omega \alpha) \, d\mu_{H^{\perp}}(\alpha) \,
  d\mu_{\ghat}(\omega).
\end{align*}
Here $\mathcal F$ denotes the Fourier transform on $H$.
\end{proof}

\subsection{Definition of generalized translation invariant systems}
\label{sec:gener-transl-invar}
 
Let $J \subset \Z$ be a countable index set. For each $j \in J$, let $P_j$ be a countable or an
uncountable index set, let $g_{j,p} \in L^2(G)$ for $p \in P_j$, and let $\LG_{\!j}$ be a closed,
co-compact subgroup in $G$. Recall that co-compact subgroups are subgroups of $G$ for which
$G/\Gamma_{\!j}$ is compact. For a compact abelian group, the group is metrizable if, and only if,
the character group is countable \cite[(24.15)]{MR0156915}. Hence, since $G/\Gamma_{\! j}$ is
compact and metrizable, the group $\widehat{G/\Gamma}_{\! j} \cong \Gamma_{\! j}^{\perp}$ is
discrete and countable. Unless stated otherwise we equip $\Gamma_{\! j}^{\perp}$ with the counting
measure and assume a fixed Haar measure $\mu_G$ on $G$. By Remark \ref{rem:1802f} this uniquely
determines the measures on $\Gamma_{\! j}, G/\Gamma_{\! j}, \ghat$, and $\ghat/\Gamma_{\!
  j}^{\perp}$.

The \emph{generalized translation invariant} (GTI) system generated by $\{g_{j,p}\}_{p\in P_{j},j\in
  J}$ with translation along closed, co-compact subgroups $\{\LG_{\!j}\}_{j \in J}$ is the family of
functions $\gtiE$. To ease notation, we will suppress the dependence of $j$ in $g_{j,p}$ and write
the GTI system as $\gti$.

If we take $\Gamma=\LG_{\!j}$ for each $j\in J$, we obtain a translation invariant (TI) system in
the sense that $f \in\ti$ implies $T_\gamma f \in\ti$ for all $\gamma \in \LG$. However, generalized
translation invariant systems are more general than translation invariant systems since we allow for
a different subgroup for each set of generators $\set{g_{j,p}}_{p \in P_j}$.

When each $P_j$ is countable and each $\LG_{\!j}$ is a uniform lattice, i.e., a \emph{discrete},
co-compact subgroup, we recover the generalized shift invariant (GSI) systems considered in
\cite{MR2283810}.  However, we note that there exist locally compact abelian groups that do not
contain any uniform lattices. As an example we mention the $p$-adic numbers, whose only discrete
subgroup is the neutral element which is not a uniform lattice. In other cases, such as the $p$-adic
integers, the LCA group will have only trivial examples of uniform lattices, \eg the neutral
element, but have plenty non-trivial co-compact subgroups, see Example~\ref{ex:p-adic} in
Section~\ref{sec:applications}.

Finally, as an alternative generalization of uniform lattices, we mention the idea of so-called
quasi-lattices, see \cite{MR2442085,MR2377132}. In contrast to closed, co-compact subgroups,
quasi-lattices are discrete subsets in $G$ that are not necessarily groups.

\subsection{Frame theory}
\label{sec:frame-theory}
The central concept of this section is that of a continuous frame. The definition is as follows.
\begin{definition} \label{def:cont-frames} Let $\cH$ be a complex Hilbert space, and
  let $(M,\Sigma_M,\cfm)$ be a measure space, where $\Sigma_M$ denotes the $\sigma$-algebra and
  $\cfm$ the non-negative measure. A family of vectors $\set{f_k}_{k \in M}$ is called a
  \emph{continuous frame} for $\cH$ with respect to $(M,\Sigma_M,\cfm)$ if
  \begin{enumerate}[(a)]
  \item $k\mapsto f_k$ is weakly measurable, \ie for all $f \in \cH$,
    the mapping $M \to \C, k \mapsto \innerprod{f}{f_k}$ is
    measurable, and
  \item there exist constants $A,B>0$ such that
    \begin{equation}
      \label{eq:cont-frame-inequality} 
      A \norm{f}^2 \le \int_M \abs{\innerprod{f}{f_k}}^2 d\cfm(k) 
      \le B \norm{f}^2 \quad \text{for all } f \in \cH. 
    \end{equation}
  \end{enumerate}
The constants $A$ and $B$ are called \emph{frame bounds}.
\end{definition}
\begin{remark}
  As we will only consider separable Hilbert spaces in this paper, we can replace weak measurability
  of $k\mapsto f_k$ with (strong) measurability with respect to the Borel algebra in $\cH$ by
  Pettis' theorem.
\end{remark}
In cases where it will cause no confusion, we will simply say that $\set{f_k}_{k \in M}$ is a frame
for $\cH$.  If $\set{f_k}_{k\in M}$ is weakly measurable and the upper bound in the above
inequality~(\ref{eq:cont-frame-inequality}) holds, then $\set{f_k}_{k\in M}$ is said to be a
\emph{Bessel family} with constant $B$. A frame $\set{f_k}_{k\in M}$ is said to be \emph{tight} if
we can choose $A = B$; if, furthermore, $A = B = 1$, then $\set{f_k}_{k\in M}$ is said to be a
\emph{Parseval frame}. 

Two Bessel families $\set{f_k}_{k\in M}$  and $\set{g_k}_{k\in M}$ are said to be \emph{dual frames} if
\begin{equation}
  \label{eq:cont-dual-weak}
  \innerprod{f}{g} = \int_M \innerprod{f}{g_k} \innerprod{f_k}{g} d\cfm (k) \quad \text{for all } f,g \in \cH.
\end{equation}
In this case we say that the following assignment
\begin{equation}
 f = \int_M \innerprod{f}{g_k}f_k \, d\cfm (k) \quad \text{for $f \in \cH$}, \label{eq:frame-rep-weak-sense}
\end{equation}
holds in the weak sense. Equation~(\ref{eq:frame-rep-weak-sense}) is often called a
\emph{reproducing formula} for $f \in \cH$. The following argument shows that two such dual frames
indeed are frames, and we shall say that the frame $\set{f_k}_{k\in M}$ is dual to $\set{g_k}_{k\in
  M}$, and vice versa. We need to show that both Bessel families $\set{f_k}_{k\in M}$ and
$\set{g_k}_{k\in M}$ satisfy the lower frame bound. By taking $f=g$ in~(\ref{eq:cont-dual-weak}) and
using the Cauchy-Schwarz inequality, we have
 \begin{align*}
  \Vert f \Vert^2 & = \int_M \langle f, f_k\rangle\langle g_k,f\rangle \,
  d\cfm(k) \le \Big( \int_M |\langle f, f_k\rangle|^2 \, d\cfm(k) \Big)^{1/2}\Big( \int_M |\langle
  f, g_k\rangle|^2 \, d\cfm(k) \Big)^{1/2} \\ & \le \Big( \int_M |\langle f, f_k\rangle|^2 \,
  d\cfm(k) \Big)^{1/2} \sqrt{B_g} \ \Vert f \Vert .
\end{align*}
In the last step we used that $\set{g_k}_{k\in M}$ has an upper frame bound $B_g$. Rearranging the
terms in the above inequality gives
\[ 
\frac{1}{B_g} \, \Vert f \Vert^2 \le \int_M |\langle f, f_k\rangle|^2 \, d\cfm(k) .
\] 
Hence, the Bessel family $\set{f_k}_{k\in M}$ satisfies the lower frame condition and is a
frame. 
A similar argument
shows that $\set{g_k}_{k\in M}$ satisfies the lower frame condition. 
This completes the argument. Moreover, by a polarization argument, it follows that two Bessel
families $\{f_k\}_{k\in M}$ and $\{g_k\}_{k\in M}$ are dual frames if, and only if,
\begin{equation*} 
\langle f,f\rangle = \int_M \langle f, g_k\rangle
   \langle f_k, f\rangle \, d\mu_M(k) \quad \text{for all } f \in \mathcal H. 
\end{equation*}
We mention that to a given frame for $\cH$ one can always find at least one dual frame. For more
information on (continuous) frames, we refer to
\cite{MR1206084,MR2428338,MR2167169,MR1735075,MR1287849,MR1968116}.

To a frame $\set{f_k}_{k\in M}$ for $\cH$, we associate the \emph{frame transform} given by
\[ 
\mathcal{H} \to L^2(M,\cfm), \quad f \mapsto (k \mapsto \innerprod{f}{f_k}).
\] 
As mentioned in the introduction, this transform is isometric if, and only if,
the family $\set{f_k}_{k\in M}$ is a Parseval frame. A similar conclusion holds for a pair of dual frames.

Let $(M_1,\Sigma_1,\mu_1)$ and $(M_2,\Sigma_2,\mu_2)$ be measure spaces.  We say that a family
$\set{f_k}_{k\in M_1}$ in the Hilbert space $\mathcal{H}$ is \emph{unitarily equivalent} to a family
$\set{g_k}_{k\in M_2}$ in the Hilbert space $\mathcal{K}$ if there is a point isomorphism $\iota\colon
M_1 \to M_2$, \ie $\iota$ is a (measurable) bijection such that $\iota(\Sigma_1)=\Sigma_2$ and $\mu_1 \circ
\iota^{-1}=\mu_2$, a unitary mapping $U\colon\mathcal{K}\to \mathcal{H}$, and measurable mapping $M_1
\to \C, k \mapsto c_k$ with $\abs{c_k}=1$ such that $f_k = c_k U g_{\iota(k)}$ for all $k \in
M_1$. This notion of unitarily equivalence generalizes a similar concept from \cite{MR1206084}.
Unitarily equivalence is important to us since it preserves many of the properties we are
interested in, \eg the frame property, including the frame bounds. The following lemma tells us that
``pairwise'' unitarily equivalence preserves the property of being dual frames.

\begin{lemma}
\label{thm:unitary-equi-duals}
Let $\set{f_k}_{k\in M_1}$ and $\setsmall{\tilde f_k}_{k\in M_1}$ be families in $\mathcal{H}$, and
let $\set{g_k}_{k\in M_2}$ and $\set{\tilde g_k}_{k\in M_2}$ be families in $\mathcal{K}$. Suppose
that
\[ 
f_k = c_k U g_{\iota(k)} \quad \text{and} \quad \tilde{f}_k = c_k U \tilde{g}_{\iota(k)}
\] 
for some point isomorphism $\iota\colon M_1 \to M_2$, a unitary mapping $U\colon\mathcal{K}\to
\mathcal{H}$, and a measurable mapping $M_1 \to \C, k \mapsto c_k$ with $\abs{c_k}=1$ for $k \in
M_1$. Then $\set{f_k}_{k \in M_1}$ and $\setsmall{\tilde{f}_k}_{k \in M_1}$ are dual frames with
respect to $(M_1,\Sigma_1,\mu_1)$ if, and only if, $\set{g_k}_{k \in M_2}$ and $\set{\tilde{g}_k}_{k
  \in M_2}$ are dual frames with respect to $(M_2,\Sigma_2,\mu_2)$.
\end{lemma}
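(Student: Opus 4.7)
The plan is to prove the two directions simultaneously by using the weak characterization of dual frames: $\{f_k\}$ and $\{\tilde f_k\}$ are dual frames if, and only if, both are Bessel and $\innerprod{f}{h} = \int_{M_1}\innerprod{f}{\tilde f_k}\innerprod{f_k}{h}\,d\mu_1(k)$ for all $f,h \in \cH$ (and symmetrically for $\{g_k\},\{\tilde g_k\}$ in $\mathcal K$). Since the whole setup is symmetric in the two sides (just invert $\iota$ and $U$), one direction suffices.

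First I would verify that the Bessel property transfers. For $f = Ug \in \cH$, we have $\innerprod{f}{f_k}_{\cH} = \innerprod{Ug}{c_k U g_{\iota(k)}}_{\cH} = \overline{c_k}\innerprod{g}{g_{\iota(k)}}_{\mathcal K}$, and since $\abs{c_k}=1$, $\abs{\innerprod{f}{f_k}_{\cH}}^2 = \abs{\innerprod{g}{g_{\iota(k)}}_{\mathcal K}}^2$. Applying the change-of-variables formula for the point isomorphism $\iota$, namely $\int_{M_1} \varphi\circ\iota\, d\mu_1 = \int_{M_2}\varphi\, d\mu_2$, together with the fact that $U$ is unitary and so $\norm{f}_{\cH}=\norm{g}_{\mathcal K}$, one sees that the upper frame inequality for $\{f_k\}$ at $f$ is identical to that for $\{g_k\}$ at $g = U^{-1}f$, and similarly for $\{\tilde f_k\}$ versus $\{\tilde g_k\}$. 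Hence one side has the Bessel property precisely when the other side does, with the same bound.

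Next I would transfer the weak reproducing formula. Assume $\{f_k\}$ and $\{\tilde f_k\}$ are dual frames in $\cH$, pick arbitrary $g,h \in \mathcal K$, and set $f = Ug$, $\tilde h = Uh$ in $\cH$. Unitarity gives $\innerprod{f}{\tilde h}_{\cH}=\innerprod{g}{h}_{\mathcal K}$. On the right-hand side of the duality equation in $\cH$, compute
\[
\innerprod{f}{\tilde f_k}_{\cH} \innerprod{f_k}{\tilde h}_{\cH}
= \overline{c_k}\innerprod{g}{\tilde g_{\iota(k)}}_{\mathcal K} \cdot c_k \innerprod{g_{\iota(k)}}{h}_{\mathcal K}
= \innerprod{g}{\tilde g_{\iota(k)}}_{\mathcal K}\innerprod{g_{\iota(k)}}{h}_{\mathcal K},
\]
using $\abs{c_k}^2=1$ and unitarity of $U$. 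Integrating over $M_1$ and applying the change of variables $k\mapsto\iota(k)$ (with $\mu_1\circ\iota^{-1}=\mu_2$) yields
\[
\innerprod{g}{h}_{\mathcal K}=\int_{M_2}\innerprod{g}{\tilde g_{k}}_{\mathcal K}\innerprod{g_{k}}{h}_{\mathcal K}\,d\mu_2(k),
\]
which is exactly the duality identity on $\mathcal K$. Combined with the Bessel transfer, this proves that $\{g_k\}$ and $\{\tilde g_k\}$ are dual. The converse direction is obtained by applying the same argument to $\iota^{-1}$, $U^{-1}$, and the unimodular factors $\overline{c_{\iota^{-1}(\ell)}}$.

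The only minor obstacle is handling the measurability and change-of-variables rigorously: one must use that $\iota$ is a measurable bijection with measurable inverse (built into the definition of point isomorphism) so that the integrand is measurable and the pushforward identity applies to the non-negative integrand $\abs{\innerprod{g}{g_{\iota(\cdot)}}}^2$ as well as to the complex-valued integrand in the weak duality formula. Once this bookkeeping is in place, the proof is a one-line application of unitarity plus the $\abs{c_k}=1$ cancellation.
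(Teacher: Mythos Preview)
Your proof is correct and follows essentially the same approach as the paper: both arguments exploit unitarity of $U$, the cancellation $\abs{c_k}^2=1$, and the change-of-variables identity for the point isomorphism $\iota$ to transfer the weak duality equation from one side to the other, then appeal to symmetry for the converse. Your version is slightly more explicit in verifying the Bessel transfer, whereas the paper simply remarks that it is obvious.
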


\begin{proof}
  Assume that $\set{f_k}_{k\in M_1}$ and $\setsmall{\tilde f_k}_{k\in M_1}$ are a pair of dual
  frames. Since the composition of measurable functions is again measurable, then by our
  assumptions it follows that $\set{g_k}_{k\in M_2}$ and $\set{\tilde g_k}_{k\in M_2}$ are weakly
  measurable. They are obviously Bessel families. For $f \in \mathcal{K}$ and $g \in \cH$ we compute: 
  \begin{align*} 
    \innerprod{f}{U^\ast g} &= \innerprod{Uf}{g}=\int_{M_1} \langle Uf,\tilde{f}_k\rangle \innerprod{f_k}{g}
    d\mu_1(k) = \int_{M_1} \innerprod{Uf}{ c_k U \tilde g_{\iota(k)}}\innerprod{c_k U g_{\iota(k)}}{g}
    d\mu_1 (k) \\ & = \int_{M_1}  \innerprod{f}{\tilde g_{\iota(k)}}\innerprod{ g_{\iota(k)}}{U^\ast g}
    d\mu_1 (k) = \int_{M_2}  \innerprod{f}{\tilde g_{k}}\innerprod{ g_{k}}{U^\ast g}
    d\mu_2 (k),
  \end{align*}
  where the last equality follows from the properties of the point isomorphism.  Since $U^\ast$ is
  invertible on all of $\mathcal{K}$, this implies that $\set{g_k}_{k \in M_2}$ and $\set{\tilde{g}_k}_{k \in
    M_2}$ are dual frames. The opposite implication follows by symmetry.
\end{proof}

If $\cfm$ is the counting measure and $\Sigma_M=2^M$ the discrete $\sigma$-algebra, we say
that $\set{f_k}_{k\in M}$ is a \emph{discrete frame} whenever
(\ref{eq:cont-frame-inequality}) is satisfied; for this measure space, any family of vectors is
obviously weakly measurable. For discrete frames, equation~(\ref{eq:frame-rep-weak-sense}) holds in the
usual strong sense, \ie with (unconditional) convergence in the $\cH$ norm.

Lastly, we combine the notion of continuous frames with that of generalized frames, also known as
g-frames. Let $(M_j,\Sigma_j,\mu_j)$ be a measure space for each $j \in J$, where $J \subset \Z$ is
a countable index set. We will say that a union $\cup_{j \in J}\set{f_{j,k}}_{{k} \in M_j}$ is a
g-frame for $\cH$, or simply a frame, with respect to $\setpropsmall{L^2(M_j,\mu_j)}{j \in J}$ if
  \begin{enumerate}[(a)]
  \item $k \mapsto f_{j,k}, M_j \to \cH$ is measurable for each $j \in J$,  and
  \item there exist constants $A,B>0$ such that
    \begin{equation}
      \label{eq:cont-g-frame-inequality} 
       A \norm{f}^2 \le \sum_{j \in J}\int_{M_j} \abs{\innerprod{f}{f_{j,k}}}^2 d\mu_{M_j}({k}) 
      \le B \norm{f}^2 \quad \text{for all } f \in \cH. 
    \end{equation}
  \end{enumerate}
  The above definition and statements about continuous frames carry over to continuous g-frames; we
  refer to the original paper by Sun~\cite{MR2239250} for a detailed account of
  g-frames. Lemma~\ref{thm:unitary-equi-duals} is also easily transferred to this new setup. We will
  repeatedly use that it is sufficient to verify the various frame properties on a dense subset of
  $\cH$. The precise statement is as follows.
  \begin{lemma}
    \label{lem:dual-frames-on-dense-subset}
    Let $\cD$ be a dense subset of $\cH$, and let $(M_j,\mu_j)$ be a measure space for each $j \in J$.
    \begin{enumerate}[(i)]
    \item Suppose that $\cup_{j \in J}\set{f_{j,k}}_{k \in M_j}$ and $\cup_{j \in J}\set{g_{j,k}}_{k \in M_j}$ are Bessel families in $\cH$. If, for  $f \in \cD$,
   \begin{equation}
      \label{eq:cont-g-dual-frames-cond} 
       \innerprod{f}{f} = \sum_{j \in J}\int_{M_j} \innerprod{f}{f_{j,k}} \innerprod{g_{j,k}}{f} \, d\mu_{M_j}({k}), 
    \end{equation}
    then equation~(\ref{eq:cont-g-dual-frames-cond}) holds for all $f \in \cH$, \ie $\cup_{j \in
      J}\set{f_{j,k}}_{k \in M_j}$ and $\cup_{j \in J}\set{g_{j,k}}_{k \in M_j}$ are dual frames. \label{enum:dense-frame-dual}
  \item Suppose that $(M_j,\mu_{M_j})$ are $\sigma$-finite and $\cup_{j \in J}\set{f_{j,k}}_{k \in M_j}$ weakly measurable. If, for  $f \in \cD$,
   \begin{equation}
      \label{eq:cont-g-Parseval-frame-cond} 
       \innerprod{f}{f} = \sum_{j \in J}\int_{M_j} \innerprod{f}{f_{j,k}} \innerprod{f_{j,k}}{f} \, d\mu_{M_j}({k}), 
    \end{equation}
    then equation~(\ref{eq:cont-g-Parseval-frame-cond}) holds for all $f \in \cH$, \ie $\cup_{j \in
      J}\set{f_{j,k}}_{k \in M_j}$ is a Parseval frame. \label{enum:dense-frame-Parseval}
    \end{enumerate}
  \end{lemma}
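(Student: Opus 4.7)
The plan is to push the identities hypothesized on $\cD$ out to all of $\cH$ by two rather different arguments. In (i) the two Bessel assumptions already make the mixed sesquilinear form continuous, and only continuity and density of $\cD$ are needed. In (ii) no Bessel bound is at hand, so the Parseval equation on $\cD$ itself must be recognized as an isometric analysis operator and extended by density; here I implicitly use that $\cD$ is a dense \emph{subspace}, which is the case in the intended application~\eqref{eq:def-D}.

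For (i), I would introduce the sesquilinear form
$$
B(f,h) = \sum_{j \in J}\int_{M_j} \innerprod{f}{f_{j,k}}\innerprod{g_{j,k}}{h} \, d\mu_{M_j}(k), \qquad f,h \in \cH,
$$
and apply the Cauchy--Schwarz inequality, first in each integral and then in the outer sum over $j\in J$, together with the Bessel bounds $B_f$ and $B_g$ of the two families, to get $|B(f,h)| \le \sqrt{B_f B_g}\,\norm{f}\,\norm{h}$. Combined with the sesquilinear identity $B(f,f) - B(g,g) = B(f-g,f) + B(g, f-g)$, this shows that the scalar map $f \mapsto B(f,f) - \innerprod{f}{f}$ is continuous on $\cH$. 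By hypothesis it vanishes on the dense set $\cD$, hence on all of $\cH$. The polarization argument already recorded in Section~\ref{sec:frame-theory} (stated there for continuous frames and carrying over verbatim to g-frames) then upgrades this diagonal identity to the full dual-frame identity~\eqref{eq:cont-g-dual-frames-cond}, so $\cup_{j}\{f_{j,k}\}$ and $\cup_{j}\{g_{j,k}\}$ are dual frames.

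For (ii), I would realize the Parseval equation on $\cD$ as an isometry between Hilbert spaces. Form the Hilbert direct sum $H = \bigoplus_{j \in J} L^2(M_j,\mu_{M_j})$, which makes sense because $J$ is countable, and for $f \in \cD$ set
$$
(\Theta f)_j(k) = \innerprod{f}{f_{j,k}}.
$$
Weak measurability makes each component measurable, and the hypothesis says exactly $\norm{\Theta f}_H = \norm{f}$; since $\cD$ is a dense subspace, $\Theta$ extends uniquely to a linear isometry $\overline{\Theta} \colon \cH \to H$. For arbitrary $f \in \cH$ and a sequence $f_n \in \cD$ with $f_n \to f$, the convergence $\Theta f_n \to \overline{\Theta} f$ in $H$ means componentwise $L^2$-convergence in each $L^2(M_j,\mu_{M_j})$; $\sigma$-finiteness of each $(M_j,\mu_{M_j})$ together with countability of $J$ allows a diagonal subsequence along which the convergence holds $\mu_{M_j}$-almost everywhere on every $M_j$ simultaneously. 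Since at the same time $\innerprod{f_n}{f_{j,k}} \to \innerprod{f}{f_{j,k}}$ pointwise by continuity of the inner product, the two a.e.\ limits coincide, giving $(\overline{\Theta} f)_j(k) = \innerprod{f}{f_{j,k}}$ off a null set for every $j$. Taking $H$-norms then yields~\eqref{eq:cont-g-Parseval-frame-cond} for every $f \in \cH$.

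The only real obstacle sits in (ii): the abstract extension $\overline{\Theta}f$ is a priori just a vector in $H$, whereas the conclusion is phrased in terms of the concrete pointwise values $\innerprod{f}{f_{j,k}}$. The $\sigma$-finiteness assumption enters precisely here, to justify the ``$L^2$ convergence implies a.e.\ convergence of a subsequence'' step in each factor $M_j$, and countability of $J$ is what makes a common diagonal subsequence across all $j$ possible. Part (i), by contrast, is essentially formal once the two Bessel bounds are combined with Cauchy--Schwarz to turn $B$ into a bounded sesquilinear form.
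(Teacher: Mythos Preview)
Your treatment of (i) is correct and coincides with the paper's: both use Cauchy--Schwarz together with the two Bessel bounds to make the mixed form $B(f,h)$ continuous, then invoke density and polarization. The paper merely cites \cite[Lemma~7]{MR1600215} rather than writing this out.

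For (ii) the two routes genuinely diverge. The paper does not extend an isometry; instead it first reduces, via $\sigma$-finiteness, to bounded measure spaces and then argues, citing \cite[Proposition~2.5]{MR2238038}, that the Parseval identity on $\cD$ already forces the family to be Bessel on all of $\cH$, after which (i) applies directly. Your isometric extension followed by the a.e.\ subsequential identification is carried out correctly, but, as you yourself flag, it relies on $\cD$ being a linear subspace, which the lemma as stated does not assume; the paper's reduction to (i) goes through for any dense subset. Incidentally, both arguments work harder than necessary here: for arbitrary $f \in \cH$ and any sequence $f_n \in \cD$ with $f_n \to f$, Fatou's lemma applied to the nonnegative functions $(j,k) \mapsto \abs{\innerprod{f_n}{f_{j,k}}}^2$ gives
\[
\sum_{j\in J} \int_{M_j} \abs{\innerprod{f}{f_{j,k}}}^2 \, d\mu_{M_j}(k) \le \liminf_{n} \norm{f_n}^2 = \norm{f}^2,
\]
i.e.\ a Bessel bound with constant $1$, and then (i) finishes. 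This requires neither $\sigma$-finiteness nor that $\cD$ be a subspace.
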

  \begin{proof}
    (i): The first statement follows by a straightforward generalization of the proof of the same result for discrete frames~\cite[Lemma 7]{MR1600215}. The duality of $\cup_{j \in J}\set{f_{j,k}}_{k \in M_j}$ and $\cup_{j \in J}\set{g_{j,k}}_{k \in M_j}$ follows then by polarization. \\
    \noindent (ii): Without loss of generality we can assume that the measure space
    $(M_j,\mu_{M_j})$ is bounded for each $j \in J$. By use of Lebesgue's bounded convergence
    theorem, equation~(\ref{eq:cont-g-Parseval-frame-cond}) for $f \in \cD$ implies that $\cup_{j
      \in J}\set{f_{j,k}}_{k \in M_j}$ is a Bessel family on all of $\cH$; a similar argument can be
    found in the proof of \cite[Proposition 2.5]{MR2238038}. The result now follows from (i).
  \end{proof}

\section{Generalized translation invariant systems}
\label{sec:gsi}

In this section we will work with generalized translation invariant systems $\gti$, introduced in
Section~\ref{sec:gener-transl-invar}, in the setting of continuous g-frames. In order to do this, we
let $(P_j,\Sigma_{P_j},\mu_{P_j})$ be a $\sigma$-finite measure space for each $j \in J$, where $J
\subset \Z$ is a countable index set. For a topological space $T$, we let $B_T$ denote the Borel
algebra of $T$. We now consider $M_j:=P_j \times \LG_{\! j}$, and let $\Sigma_{M_j}:=\Sigma_{P_j}
\otimes B_{\LG_{\! j}}$ and $\mu_{M_j}:=\mu_{P_j} \otimes \mu_{\LG_{\! j}}$ denote the product
algebra and the product measure on $P_j \times \LG_{\! j}$, respectively.

 We will work under the following \textbf{standing hypotheses} on the generalized
translation invariant system $\gti$. For each $j \in J$:
\begin{enumerate}[(I)]
\item $(P_j,\Sigma_{P_j},\mu_{P_j})$ is a $\sigma$-finite measure space, \label{eq:Hypo1}
\item the mapping $p \mapsto g_p, (P_j,\Sigma_{P_j}) \to (L^2(G),B_{L^2(G)})$ is
  measurable, \label{eq:Hypo2} 
\item the mapping $(p,x)\mapsto g_p(x), (P_j \times G, \Sigma_{P_j} \otimes B_G) \to (\C, B_\C)$ is
  measurable. \label{eq:Hypo3} 
\end{enumerate}

Consider $T_{\gamma} g_p$ as a function of $(p,\gamma)\in P_j \times \LG_{\! j}$ into $L^2(G)$. This
function is continuous in $\gamma$ and measurable in $p$. Such functions are sometimes called
Carath\'eodory functions, and since $\LG_{\! j} \subset G$ is a second countable metric space, it
follows that any Carath\'eodory function, in particular $T_{\gamma} g_p$, is jointly measurable on
$(M_j,\Sigma_{M_j})=(P_j \times \LG_{\! j}, \Sigma_{P_j} \otimes B_{\LG_{\! j}})$. Thus, the family
of functions $\gti$ is automatically weakly measurable. A generalized translation invariant system
is therefore a frame for $L^2(G)$ if (\ref{eq:cont-g-frame-inequality}) is satisfied with respect to
the measure spaces $(M_j,\Sigma_{M_j},\mu_{M_j})$. Similar conclusions are valid with respect to
generalized translation invariant systems being Bessel families, Parseval frames, \etc Let us here
just observe that for dual frames $\gti$ and $\gti[h]$, we have the reproducing formula
\begin{equation*}
   f = \sum_{j\in J} \int_{P_j}\int_{\LG_{\!j}} \innerprod{f}{T_\gamma g_p}T_\gamma h_p \, d\mu_{\LG_{\!j}} (\gamma) \, d\mu_{P_j}(p) \quad  \text{for $f \in L^2(G)$},
\end{equation*}
where the measure on $\LG_{\!j}$ is chosen so that the measure on $\LG_{\!j}^\perp$ is the counting measure.

\begin{remark}
\label{rem:stand-hypo-special-cases}
  In Section~\ref{sec:gsi} we always assume the three standing hypotheses. However, in many special
  cases these assumptions are automatically satisfied: 
\begin{enumerate}[(a)]
\item When $P_j$ is
countable for all $j\in J$, we will equip it with a scaled counting measure $k \mu_c$, $k>0$, and the discrete
$\sigma$-algebra $2^{P_j}$. If all $P_j$, $j \in J$, are
countable, all three standing hypotheses therefore trivially hold. 
\label{rem:P-countable}
\item If $P_j$ is a second countable metric space for all $j\in J$ and if $p \mapsto g_p$ is
  continuous, then the standing hypotheses \eqref{eq:Hypo2} and \eqref{eq:Hypo3} are
  satisfied. Hence, if $P_j$ is also a subset of $G$ or $\ghat$ equipped with their respective Haar
  measure, then all three standing hypotheses hold.  \label{rem:count-wrt-p}
\end{enumerate}
\end{remark}

The main characterization results are stated in Theorem~\ref{th:2001e}
and~\ref{th:parseval-char-LCA}. These results rely on the following technical assumption.

\begin{definition} \label{def:asc} We say that two generalized translation invariant systems \gti[g]
  and \gti[h] satisfy the \emph{dual $\alpha$ local integrability condition} (dual $\alpha$-LIC) if,
  for all $f\in\mathcal D$,
  \begin{equation} \label{eq:asc} \sum_{j\in J} \int_{P_j} \sum_{\alpha\in \LG_{\! j}^{\perp}}
    \int_{\ghat} \absbig{\hat{f}(\omega) {\hat{f}(\omega\alpha)} {\hat
        g_p(\omega)}\hat{h}_p(\omega\alpha)} \, d\mu_{\ghat} (\omega) \, d\mu_{P_j}(p) < \infty.
\end{equation}
In case $g_p=h_p$ we refer to \eqref{eq:asc} as the \emph{$\alpha$ local integrability condition}
($\alpha$-LIC) for the generalized translation invariant system \gti.
\end{definition}

The $\alpha$-LIC should be compared to the local integrability condition for generalized shift
invariant systems introduced in \cite{MR1916862} for $L^2(\R^n)$ and in \cite{MR2283810} for
$L^2(G)$. For generalized translation invariant systems \gti the \emph{local integrability
  conditions} (LIC) becomes 
\begin{equation}
\label{eq:LIC} 
\sum_{j\in J} \int_{P_j} \sum_{\alpha\in \LG_{\! j}^{\perp}} \int_{\text{supp}\, \hat{f}} \absbig{ \hat{f}(\omega\alpha) \hat{g}_p(\omega) }^2  d\mu_{\ghat}(\omega) \, d\mu_{P_j}(p) < \infty\quad \text{for all $f\in\mathcal D$}. 
\end{equation}
Since the integrands in (\ref{eq:asc}) and (\ref{eq:LIC}) are measurable on $\cP_j \times \ghat$, we
are allowed to reorder sums and integrals in the local integrability conditions.

We will see (Lemma~\ref{le:LIC} and Example~\ref{ex:0402e}) that the LIC implies the $\alpha$-LIC,
but not vice versa.  Moreover, we mention that \emph{dual} local integrability conditions have not
been considered in the literature before. The following simple observation will often be used.

\begin{lemma} \label{le:ASC-compact-sets-and-functions} 
The following assertions are equivalent:
\begin{enumerate}[(i)]
\item The systems \gti and \gti[h] satisfy the dual $\alpha$-LIC, 
\item for each compact subset $K \subseteq \ghat \setminus E$ 
\[ \displaystyle \sum_{j\in J} \int_{P_j} \sum_{\alpha\in \LG_{\! j}^{\perp}}
  \int_{K\cap\alpha^{-1}K} \absbig{\hat{g}_p(\omega) \hat{h}_p(\omega\alpha)} \,
  d\mu_{\ghat} (\omega) \, d\mu_{P_j}(p) < \infty. \]
\end{enumerate}
\end{lemma}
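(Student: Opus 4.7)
The proof is a clean two-way bounding argument between the quantity in the dual $\alpha$-LIC (which involves the extra factors $|\hat f(\omega) \hat f(\omega \alpha)|$) and the quantity in (ii) (which replaces these factors by the restriction of the integration region to $K \cap \alpha^{-1}K$). My plan is to show the two directions separately by making appropriate choices of, and appropriate estimates on, the testing function $f \in \mathcal{D}$.

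For $(ii)\Rightarrow (i)$, I start from an arbitrary $f \in \mathcal{D}$. By the defining property of $\mathcal{D}$ in \eqref{eq:def-D}, $\hat f \in L^\infty(\ghat)$ and $K := \supp \hat f$ is a compact subset of $\ghat \setminus E$. Setting $M = \|\hat f\|_{\infty}$ gives the pointwise estimate
\[
\bigl|\hat{f}(\omega)\hat{f}(\omega\alpha)\bigr|
 \le M^{2}\,\charfct{K}(\omega)\,\charfct{K}(\omega\alpha)
 = M^{2}\,\charfct{K\cap\alpha^{-1}K}(\omega).
\]
Inserting this into \eqref{eq:asc} yields
\[
\sum_{j\in J} \int_{P_j} \sum_{\alpha\in \LG_{\! j}^{\perp}} \int_{\ghat} \bigl|\hat{f}(\omega)\hat{f}(\omega\alpha)\hat{g}_p(\omega)\hat{h}_p(\omega\alpha)\bigr|\,d\mu_{\ghat}(\omega)\,d\mu_{P_j}(p)
\;\le\; M^{2}\sum_{j\in J}\int_{P_j}\sum_{\alpha\in \LG_{\!j}^{\perp}}\int_{K\cap\alpha^{-1}K}\bigl|\hat g_p(\omega)\hat h_p(\omega\alpha)\bigr|\,d\mu_{\ghat}(\omega)\,d\mu_{P_j}(p),
\]
which is finite by (ii). Hence the dual $\alpha$-LIC holds.

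For $(i)\Rightarrow (ii)$, given a compact $K \subseteq \ghat \setminus E$, I want to choose $f \in \mathcal{D}$ such that $|\hat f(\omega)\hat f(\omega\alpha)| \ge \charfct{K\cap \alpha^{-1}K}(\omega)$. The canonical choice is $\hat f = \charfct{K}$, which is bounded, compactly supported in $\ghat \setminus E$, and hence $f \in \mathcal{D}$. Then $|\hat f(\omega)\hat f(\omega\alpha)| = \charfct{K}(\omega)\charfct{K}(\omega\alpha) = \charfct{K\cap\alpha^{-1}K}(\omega)$ exactly, and the dual $\alpha$-LIC bound for this particular $f$ immediately yields the estimate in (ii).

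There is no real obstacle here; the only thing to verify is that the region $K \cap \alpha^{-1}K$ really is captured by the product $\charfct{K}(\omega)\charfct{K}(\omega\alpha)$, which follows from the definition of the annihilator action $\omega \mapsto \omega\alpha$ (using multiplicative notation for $\ghat$, this is the same shift appearing in the statement). Measurability of the integrand on $P_j \times \ghat$, which allows all the Fubini/Tonelli reorderings, was already noted in the paragraph following \eqref{eq:LIC}.
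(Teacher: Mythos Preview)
Your proof is correct and follows essentially the same approach as the paper: for $(i)\Rightarrow(ii)$ you take $\hat f=\charfct{K}$, and for $(ii)\Rightarrow(i)$ you set $K=\supp\hat f$ and bound by $\|\hat f\|_\infty^2$. If anything, you are slightly more careful than the paper in noting that $K\subseteq\ghat\setminus E$ is needed so that $\hat f=\charfct{K}$ actually gives $f\in\mathcal D$.
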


\begin{proof} 
To show that (i) implies (ii), let $K$ be any compact subset in $\ghat$ and define
$\hat{f} = \mathds{1}_{K}$. Then, by assumption,
\begin{multline*}
\sum_{j\in J} \int_{P_j} \sum_{\alpha\in \LG_{\! j}^{\perp}} \int_{\ghat} \absbig{\hat{f}(\omega)
\hat{f}(\omega\alpha)\hat{g}_p(\omega)\hat{h}_p(\omega\alpha)}\,
d\mu_{\ghat} (\omega) \, d\mu_{P_j}(p) \\
 = \sum_{j\in J} \int_{P_j} \sum_{\alpha\in \LG_{\! j}^{\perp}}
\int_{K\cap\alpha^{-1}K} \absbig{\hat{g}_p(\omega)\hat{h}_p(\omega\alpha)} \, d\mu_{\ghat}
(\omega) \, d\mu_{P_j}(p)< \infty.
\end{multline*}
 To show that (ii) implies (i), take $f\in\mathcal D$ and denote $\text{supp}\,\hat{f}$ by
 $K$. Note that $\hat{f} \in L^{\infty}(\ghat)$.  Hence, we find that
  \begin{multline*}
\sum_{j\in J} \int_{P_j} \sum_{\alpha\in \LG_{\! j}^{\perp}} \int_{\ghat} \absbig{ \hat{f}(\omega)
\hat{f}(\omega\alpha) \hat{g}_p(\omega)\hat{h}_p(\omega\alpha)}\,
d\mu_{\ghat} (\omega) \, d\mu_{P_j}(p) \\ 
 \le \Vert \hat{f} \Vert_{\infty}^2 \ \sum_{j\in J} \int_{P_j} \sum_{\alpha\in
  \LG_{\! j}^{\perp}} \int_{K\cap\alpha^{-1}K} \absbig{ \hat{g}_p(\omega) \hat{h}_p(\omega\alpha) } \,
d\mu_{\ghat} (\omega) \, d\mu_{P_j}(p) < \infty.
\end{multline*}
\end{proof}

In a similar way, we see that \gti satisfies the local integrability condition if, and only if, for
each compact subset $K\subseteq \ghat\setminus E$
\begin{equation}
\label{eq:LIC-with-K}
\sum_{j\in J} \int_{P_j} \sum_{\alpha\in \LG_{\! j}^{\perp}}
\int_{K\cap\alpha^{-1}K} \absbig{ \hat{g}_p(\omega) }^2 \, d\mu_{\ghat}
(\omega) \, d\mu_{P_j}(p) < \infty.
\end{equation}

Inspired by the definition of the Calder\'on sum in wavelet theory, we will say that the term
$\sum_{j\in J} \int_{P_j} \abs{\hat{g}_p(\omega)}^2 d\mu_{P_j}(p)$ is the \emph{Calder\'on
  integral}.  The next result shows that the Calder\'on integral is bounded if the generalized
translation invariant system is a Bessel family. From this it follows that the
$t_{\alpha}$-equations~(\ref{eq:1702d}) are well-defined.  We remark that
Proposition~\ref{pr:bessel-implies-calderon-bound} generalizes \cite[Proposition~3.6]{MR2283810} and
\cite[Proposition 4.1]{MR1916862} from the uniform lattice setting where each $P_j$ is countable to
the setting of generalized translation invariant systems.

\begin{proposition} \label{pr:bessel-implies-calderon-bound} If the generalized translation
  invariant system \gti is a Bessel
  family with bound $B$, then 
  \begin{equation}
    \label{eq:calderon-bounded-by-bessel}
    \sum_{j\in J} \int_{P_j} \abs{\hat{g}_p(\omega)}^2 d\mu_{P_j}(p) \le B \quad \text{for a.e. }  \omega \in \ghat.
  \end{equation}
\end{proposition}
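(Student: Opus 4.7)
The plan is to apply the Bessel inequality to test functions $f \in \cD$ whose Fourier transforms are indicators of small neighborhoods, use Lemma~\ref{le:1902a} to collapse the resulting Bessel sum to the Calder\'on integral, and conclude via Lebesgue's differentiation theorem.

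I would enumerate $J = \{j_1, j_2, \ldots\}$ and set $J_n := \{j_1, \ldots, j_n\}$. Since each $\LG_{\!j}^{\perp}$ is a discrete subgroup of $\ghat$, for every finite $J_n$ the identity $1 \in \ghat$ is isolated in $\bigcup_{j \in J_n}\LG_{\!j}^{\perp}$, so there is a neighborhood $V_n$ of $1$ with $V_n \cap \LG_{\!j}^{\perp} = \{1\}$ for all $j \in J_n$. For a fixed $\omega_0 \in \ghat \setminus \overline E$, I would pick a relatively compact open neighborhood $U$ of $\omega_0$ with $\overline U \cap \overline E = \emptyset$ and $U^{-1}U \subset V_n$; then $f := \mathcal F^{-1}\mathds{1}_U$ belongs to $\cD$ and $\|f\|^2 = \mu_{\ghat}(U)$.

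By Lemma~\ref{le:1902a} applied with $f_1 = f_2 = f$, $\varphi = \psi = g_p$, and $H = \LG_{\!j}$,
\[
\int_{\LG_{\!j}} \abs{\innerprod{f}{T_\gamma g_p}}^2 \, d\mu_{\LG_{\!j}}(\gamma) = \sum_{\alpha \in \LG_{\!j}^{\perp}} \int_{\ghat} \mathds{1}_U(\omega)\,\mathds{1}_U(\omega\alpha)\,\overline{\hat g_p(\omega)}\,\hat g_p(\omega\alpha) \, d\mu_{\ghat}(\omega).
\]
For $j \in J_n$, the choice $U^{-1}U \subset V_n$ forces $\mathds{1}_U(\omega)\mathds{1}_U(\omega\alpha) = 0$ whenever $\alpha \in \LG_{\!j}^{\perp} \setminus \{1\}$, so only the $\alpha = 1$ term survives and the right-hand side equals $\int_U \abs{\hat g_p(\omega)}^2 \, d\mu_{\ghat}(\omega)$. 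Since each quantity $\int_{P_j}\int_{\LG_{\!j}} \abs{\innerprod{f}{T_\gamma g_p}}^2 \, d\mu_{\LG_{\!j}}(\gamma)\,d\mu_{P_j}(p)$ is non-negative, truncating the Bessel estimate to $j \in J_n$ yields
\[
\sum_{j \in J_n}\int_{P_j}\int_U \abs{\hat g_p(\omega)}^2 \, d\mu_{\ghat}(\omega) \, d\mu_{P_j}(p) \le B\,\mu_{\ghat}(U).
\]

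Dividing by $\mu_{\ghat}(U)$ and shrinking $U$ onto $\omega_0$ along a nested family of neighborhoods, Lebesgue's differentiation theorem (available on the metrizable, $\sigma$-compact group $\ghat$) produces $\sum_{j \in J_n}\int_{P_j}\abs{\hat g_p(\omega_0)}^2 \, d\mu_{P_j}(p) \le B$ for $\mu_{\ghat}$-a.e.\ $\omega_0$. Sending $n \to \infty$ and invoking monotone convergence then delivers the asserted Calder\'on bound off a countable union of null sets. The principal obstacle is that $\bigcup_{j \in J}\LG_{\!j}^{\perp}$ may be dense in $\ghat$ (as happens already in the dyadic wavelet setting, where $\bigcup_{j \in \Z} 2^{-j}\Z$ is dense in $\R$), so no single $U$ can isolate all off-diagonal terms simultaneously; the finite truncation, combined with the pointwise non-negativity of the $j$-summands in the Bessel sum, is precisely what sidesteps this difficulty.
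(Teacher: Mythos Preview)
Your argument shares the paper's core strategy: truncate $J$ to a finite set, exploit that the identity is isolated in the finite union $\bigcup_{j\in J_n}\Gamma_{\!j}^{\perp}$ to choose test functions with small Fourier support, and use Lemma~\ref{le:1902a} to kill the off-diagonal terms, arriving at the averaged bound $\int_U F_n \le B\,\mu_{\ghat}(U)$ for the truncated Calder\'on integral $F_n$. The difference is only in the last step. The paper argues by contradiction: assuming $F>B$ on a Borel set $N$ of positive measure, it locates (via $\sigma$-compactness and a finite open cover of small diameter) a set $B\cap K\cap N$ of positive measure and small diameter, takes $\hat f=\mathds{1}_{B\cap K\cap N}$, and reads off the contradiction $\int F_n > B\,\|f\|^2$ directly. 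You instead shrink $U$ and invoke Lebesgue differentiation.

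The paper's route is more self-contained here. Your appeal to ``Lebesgue's differentiation theorem (available on the metrizable, $\sigma$-compact group $\ghat$)'' is not quite free: on a general second countable LCA group the theorem is not automatic for arbitrary nested neighborhoods and typically requires a carefully chosen differentiation basis (a $D'$-sequence or a doubling-type condition), so that line needs either a precise reference or a workaround. A painless fix, closer to the paper's argument, is to observe that once you have $\int_U F_n \le B\,\mu_{\ghat}(U)$ for every open $U$ with $U^{-1}U\subset V_n$, the same inequality holds for every Borel subset of any such $U$ by outer regularity of Haar measure; covering $\ghat$ by countably many such $U$'s then gives $F_n\le B$ a.e.\ without any differentiation.
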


\begin{proof}
  We begin by noting that the Calder\'on integral in~(\ref{eq:calderon-bounded-by-bessel}) is
  well-defined by our standing hypothesis~\eqref{eq:Hypo3}. We assume without loss of generality
  that $J=\mathbb Z$. From the Bessel assumption on \gti, we have
  \[ \sum_{\abs{j} \le M} \int_{P_j} \int_{\LG_{\! j}} \abs{\innerprod{f}{T_{\gamma} g_p}}^2 \,
  d\mu_{\Gamma_{\!  j}}(\gamma) \, d\mu_{P_j}(p) \le B \norm{f}^2
  \]
  for every $M \in \N$ and all $f \in L^2(G)$. By Lemma \ref{le:1902a} we then get
  \begin{equation}
\sum_{\abs{j}\le M} \int_{P_j} \sum_{\alpha \in \LG_{\!  j}^\perp} \int_{\ghat} \hat{f}
  (\omega) \overline{\hat{f} (\omega \alpha)} \overline{\hat{g}_p (\omega)} \hat{g}_p (\omega
   \alpha) \,  d\mu_{\ghat} (\omega) \, d\mu_{P_j}(p) \le B \norm{f}^2 \label{eq:bessel-cond-implies}
 \end{equation}
  for every $M \in \N$ and all $f \in \mathcal D$. Assume towards a contradiction that there exists
  a Borel subset $N \subset \ghat$ of positive measure $\mu_{\ghat}(N)>0$ for which
  \begin{equation*}
  \sum_{j\in J} \int_{P_j} \abs{\hat{g}_p(\omega)}^2 \, d\mu_{P_j}(p) > B \quad \text{for a.e. } \omega \in N.
  \end{equation*} 
  In \cite{MR2283810} it is assumed that $N$ contains an open ball, but this needs not be the
  case. However, since $\ghat $ is $\sigma$-compact, there exists a compact set $K$ so that
  $\mu_{\ghat}(K \cap N)>0$. Set $\delta_M:=\inf\setpropsmall{d(\alpha,1)}{\alpha \in \LG_{\!
      j}^\perp \setminus \{1\}, \abs{ j} \le M}$. For any discrete subgroup $\LG$ there exists a
  $\delta>0$ such that $B(x,\delta)\cap \LG=\set{x}$ for $x \in \LG$, where $B(x,\delta)$ denotes
  the open ball of radius $\delta$ and center $x$. It follows that $\delta_M>0$ since $\delta_M$ is
  the smallest of such radii about $x=1$ from a \emph{finite} union of discrete subgroups $\LG_{\!
    j}^{\perp}$. Let $\mathcal{O}$ be an open covering of $K$ of sets with diameter
  strictly less than $\delta_M/2$. 

  Since a finite subset of $\mathcal{O}$ covers $K$, there is an open set $B \in \mathcal{O}$ so
  that $\mu_{\ghat}(B \cap K \cap N)>0$. Define $f \in L^2(G)$ by
  \[
  \hat{f} = \mathds{1}_{B \cap K \cap N}.
  \] 
  By Remark~\ref{rem:E-can-be-empty} below, we can assume that $\oline{E}$ does not intersect the
  closure of $B \cap K \cap N$. Therefore, $f \in \mathcal D$ and by our
  assumption we have
  \begin{multline*}
   \sum_{\abs{j}\le M} \int_{P_j} \sum_{\alpha \in \LG_{\!  j}^\perp}
    \int_{\ghat} \hat{f} (\omega) \overline{\hat{f} (\omega \alpha)} \overline{\hat{g}_p (\omega) }
    \hat{g}_p (\omega
    \alpha) \, d\mu_{\ghat} (\omega) \, d\mu_{P_j}(p) \\
    = \int_{\ghat} \vert \hat{f}(\omega)\vert^2 \sum_{\abs{j}\le M}\int_{P_j}
    \abs{\hat{g}_p(\omega)}^2 \, d\mu_{P_j}(p) \, d\mu_{\ghat}(\omega),
  \end{multline*}
  where the change of the order of integration above is justified by an application of the
  Fubini-Tonelli theorem together with the Bessel assumption (\ref{eq:calderon-bounded-by-bessel})
  and our standing hypotheses \eqref{eq:Hypo1} and \eqref{eq:Hypo3}. By letting $M$ tend to infinity,
  we see that
  \[
  \sum_{j \in J} \int_{P_j} \sum_{\alpha \in \LG_{\!  j}^\perp} \int_{\ghat} \hat{f} (\omega)
  \overline{\hat{f} (\omega \alpha)} \overline{\hat{g}_p (\omega) } \hat{g}_p (\omega \alpha) \,
  d\mu_{\ghat} (\omega) \, d\mu_{P_j}(p) > B \norm{f}^2,
  \]
  which contradicts (\ref{eq:bessel-cond-implies}).
\end{proof}

\begin{remark}
  \label{rem:E-can-be-empty} 
  In case $\oline{E}$ intersects the closure of $A:= B \cap K \cap N$ in the proof of
  Proposition~\ref{pr:bessel-implies-calderon-bound}, one needs to approximate the function $f$ with
  functions from $\cD$ as defined in (\ref{eq:def-D}). As we will use such arguments several times
  in the remainder of this paper, let us consider how to do such a modification in this specific
  case. Define $E_A=\oline{E}\cap \oline{A}$ and
  \[
F_n =\setpropbig{\omega \in A}{\inf\setprop{d(\omega,a)}{a\in E_A}
    <\tfrac{1}{n}}, \quad \text{for each $n \in \N$}.
\] 
Define $\hat{f}_n=\mathds{1}_{A \setminus F_n}\in \mathcal D$.  Since
$F_{n+1} \subset F_{n}$ and $\mu_{\ghat}(F_1)<\infty$, we have
\begin{align*}
  \normsmall{\hat{f}-\hat{f}_n} = \mu_{\ghat}(F_n) \to \mu_{\ghat}(\cap_{n \in \N} F_n) = \mu_{\ghat}(E_A) = 0 \qquad \text{as $n \to \infty$},
\end{align*}
where $\hat{f} = \mathds{1}_{B \cap K \cap N}$. Finally, we use $\hat{f}_n$ in place of
$\hat{f}$ in the final argument of the proof above, and let $n \to \infty$.
\end{remark}

\subsection{Characterization results for dual and Parseval frames}
\label{sec:dual-frames}
\label{sec:parseval-frames}

We are ready to prove the first of our main results, Theorem~\ref{th:2001e}. Under the technical
dual $\alpha$-LIC assumption we characterize dual generalized translation invariant frames in terms
of $t_{\alpha}$-equations. We stress that these GTI systems are dual frames \emph{with respect to}
$\setpropsmall{L^2(M_j,\mu_j)}{j \in J}$ defined in the previous section. Recall that we assume a
Haar measure on $G$ to be given, and that we equip every $\LG_{\! j}^{\perp}\subset \ghat$ with the
counting measure.

\begin{theorem} \label{th:2001e} Suppose that \gti and \gti[h] are Bessel families satisfying the
  dual $\alpha$-LIC. Then the following statements are equivalent:
\begin{enumerate}[(i)]
 \item \gti and \gti[h] are dual frames for $L^2(G)$,
 \item for each $\alpha \in \bigcup_{j\in J} \LG_{\! j}^{\perp}$ we have
\begin{equation}
 \label{eq:1702d} t_{\alpha}(\omega) := \sum_{j\in J \, : \, \alpha\in\LG_{\! j}^{\perp}} \int_{P_j}
 \overline{\hat{g}_p(\omega)} \hat{h}_p(\omega\alpha ) \, d\mu_{P_j}(p) =
\delta_{\alpha,1} \quad \almoste\ \omega \in \ghat. 
\end{equation}
\end{enumerate}
\end{theorem}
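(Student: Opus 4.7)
My plan is a Fourier-side computation followed by a density/test-function argument. By Lemma~\ref{lem:dual-frames-on-dense-subset}(\ref{enum:dense-frame-dual}) and the Bessel assumption on both systems, the dual-frame identity (i) is equivalent to
\[
\innerprod{f}{f} = \sum_{j\in J}\int_{P_j}\int_{\LG_{\!j}}\innerprod{f}{T_\gamma g_p}\innerprod{T_\gamma h_p}{f}\,d\mu_{\LG_{\!j}}(\gamma)\,d\mu_{P_j}(p)\quad\text{for every } f\in\cD.
\]
I apply Lemma~\ref{le:1902a} with $H=\LG_{\!j}$, $f_1=f_2=f$, $\varphi=g_p$, $\psi=h_p$. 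Since $\LG_{\!j}^\perp$ carries the counting measure, the lemma rewrites the $\LG_{\!j}$-integral as $\sum_{\alpha\in\LG_{\!j}^\perp}\int_{\ghat}\hat f(\omega)\overline{\hat f(\omega\alpha)}\overline{\hat g_p(\omega)}\hat h_p(\omega\alpha)\,d\mu_{\ghat}(\omega)$. The dual $\alpha$-LIC supplies the absolute integrability needed to apply Fubini-Tonelli over $J\times P_j\times\LG_{\!j}^\perp\times\ghat$ and to regroup the sum by the value of $\alpha\in\bigcup_{j}\LG_{\!j}^\perp$. Combined with Plancherel on the left, the identity becomes
\[
\int_{\ghat}|\hat f(\omega)|^2\,d\mu_{\ghat}(\omega) = \int_{\ghat}\hat f(\omega)\sum_{\alpha\in\bigcup_{j}\LG_{\!j}^\perp}\overline{\hat f(\omega\alpha)}\,t_\alpha(\omega)\,d\mu_{\ghat}(\omega)\qquad(f\in\cD).
\]

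The direction (ii)$\Rightarrow$(i) is immediate: substituting $t_\alpha=\delta_{\alpha,1}$ collapses the right-hand sum to its $\alpha=1$ term, which equals $\int|\hat f|^2\,d\mu_{\ghat}$. For (i)$\Rightarrow$(ii) I extract the a.e.\ pointwise values of $t_\alpha$ from the displayed scalar identity by inserting suitable test functions. Fix $\alpha_0\in\bigcup_{j}\LG_{\!j}^\perp$ and $\omega_0$ outside an appropriate null set in $\ghat$, and take $\hat f=\mathds{1}_V+c\,\mathds{1}_{\alpha_0^{-1}V}$, where $V$ is a shrinking Borel neighborhood of $\omega_0$ (modified as in Remark~\ref{rem:E-can-be-empty} to avoid $\overline{E}$) and $c\in\C$ is a free parameter (with $c=0$ when $\alpha_0=1$). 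Inserting this $\hat f$, varying $c$ -- equivalently splitting real and imaginary parts -- and applying Lebesgue's differentiation theorem on the LCA group $\ghat$ to both sides yields $t_1(\omega_0)=1$ and $t_{\alpha_0}(\omega_0)=0$ for $\alpha_0\neq 1$ at a.e.\ $\omega_0$.

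The main obstacle is that $\bigcup_{j\in J}\LG_{\!j}^\perp$ need not be uniformly discrete -- it may be dense in $\ghat$, as with $\bigcup_{j\in\Z}2^{-j}\Z$ in the dyadic wavelet example. Thus for any neighborhood $V$ of $\omega_0$, infinitely many $\alpha\notin\{1,\alpha_0^{\pm1}\}$ may still satisfy $V\cap\alpha^{-1}V\neq\emptyset$ and contribute unwanted off-diagonal terms. The remedy, mirroring the proof of Proposition~\ref{pr:bessel-implies-calderon-bound}, is a finite truncation: for each $M\in\N$ the union $\bigcup_{|j|\le M}\LG_{\!j}^\perp$ is a finite union of discrete subgroups and hence has a positive separation $\delta_M>0$ between $1$ and its nearest nonidentity element. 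Choosing $V$ of diameter below $\delta_M/2$ eliminates every unwanted $\alpha$ from the truncated sum, while the dual $\alpha$-LIC controls the tail $|j|>M$ as an error that vanishes when $M\to\infty$ before $V$ is shrunk to $\omega_0$. Once the analysis is localized to finitely many $\alpha$'s, Lebesgue differentiation furnishes the pointwise identities $t_\alpha(\omega_0)=\delta_{\alpha,1}$ at a.e.\ $\omega_0$, completing the argument.
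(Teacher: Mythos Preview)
Your Fourier-side computation and the direction (ii)$\Rightarrow$(i) are correct and match the paper. The gap is in (i)$\Rightarrow$(ii), specifically in the coupling of the two limits. You need $\operatorname{diam}(V)<\delta_M/2$ in order for the truncated sum over $\bigcup_{|j|\le M}\LG_{\!j}^\perp$ to collapse to $\{1,\alpha_0^{\pm1}\}$; but $\delta_M$ may tend to $0$ as $M\to\infty$, so you cannot send $M\to\infty$ \emph{first} while keeping $V$ fixed. If instead you fix $M$ and shrink $V$, the tail bound furnished by the dual $\alpha$-LIC is a constant $T_M$ depending only on a fixed compact set, not on $V$; after dividing by $\mu_{\ghat}(V)$ for Lebesgue differentiation, the error $T_M/\mu_{\ghat}(V)$ blows up. The argument in Proposition~\ref{pr:bessel-implies-calderon-bound} that you are mirroring does not face this problem because there only the $\alpha=1$ term is needed and no division by $\mu_{\ghat}(V)$ occurs.

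The paper avoids this difficulty by a different device. Instead of localising in $\omega$, it introduces an auxiliary variable $x\in G$ and considers $w_f(x):=\sum_j\int_{P_j}\int_{\LG_{\!j}}\innerprod{T_xf}{T_\gamma g_p}\innerprod{T_\gamma h_p}{T_xf}\,d\mu_{\LG_{\!j}}\,d\mu_{P_j}$. Your Fourier computation, applied to $T_xf$, shows that $w_f(x)=\sum_{\alpha}\alpha(x)\,\hat w(\alpha)$ with $\hat w(\alpha)=\int_{\ghat}\hat f(\omega)\overline{\hat f(\omega\alpha)}\,t_\alpha(\omega)\,d\mu_{\ghat}(\omega)$, an absolutely convergent generalized Fourier series; in particular $w_f$ is almost periodic. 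If (i) holds then $w_f(x)=\|T_xf\|^2=\|f\|^2$ for every $x$, so the uniqueness theorem for almost periodic functions forces $\hat w(\alpha)=\delta_{\alpha,1}\|f\|^2$ for \emph{every} $\alpha$ simultaneously, with no need to separate $\alpha$'s by shrinking neighbourhoods. From $\hat w(\alpha)=0$ for all $f\in\cD$ one then reads off $t_\alpha=0$ a.e.\ via the bounded-operator identity $\innerprod{\hat f}{M_{\overline{t_\alpha}}T_{\alpha^{-1}}\hat f}=0$ and polarization; the case $\alpha=1$ is immediate. This almost-periodic trick is the missing idea in your proposal.
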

\begin{proof}
  Let us first show that the $t_{\alpha}$-equations are well-defined.  Take $B$ to be a common
  Bessel bound for the two GTI families. By two applications of the Cauchy-Schwarz inequality and
  Proposition \ref{pr:bessel-implies-calderon-bound}, we find that
  \begin{align*}
    \sum_{j\in J \, : \, \alpha \in \Gamma_{\! j}^{\perp}} & \int_{P_j} \vert \hat{g}_p (\omega)\vert \vert \hat{h}_p(\omega\alpha)\vert \, d\mu_{P_j}(p) \le \sum_{j\in J} \int_{P_j}
    \vert \hat{g}_p (\omega) \vert \vert \hat{h}_p(\omega\alpha) \vert \, d\mu_{P_j}(p) \\ & \le \sum_{j\in J} \Big(
    \int_{P_j} \vert \hat{g}_p(\omega)\vert^2 \, d\mu_{P_j}(p)\Big)^{1/2}\Big( \int_{P_j} \vert \hat{h}_p(\omega\alpha) \vert^2 \, d\mu_{P_j}(p)\Big)^{1/2} \\ & \le \Big( \sum_{j\in J}  \int_{P_j} \vert \hat{g}_p(\omega) \vert^2 \, d\mu_{P_j}(p)\Big)^{1/2}\Big( \sum_{j\in J} \int_{P_j} \vert \hat{h}_p(\omega\alpha) \vert^2 \, d\mu_{P_j}(p)\Big)^{1/2}
      \le B,  
  \end{align*}
 for $\almoste \ \omega \in \ghat$. This shows that the $t_{\alpha}$-equations are well-defined and converge absolutely.

  For $f\in\mathcal D$, define the function
  \begin{equation}
  w_f:G\to \C, \ w_f(x) := \sum_{j\in J} \int_{P_j}\int_{\LG_{\! j}} \langle T_x f,T_{\gamma}
  g_p\rangle \langle T_{\gamma} h_p, T_xf \rangle \, d\mu_{\Gamma_{\!  j}}(\gamma) \, d\mu_{P_j}(p).\label{eq:wf-definition}
  \end{equation}
  By Lemma \ref{le:1902a} and the calculation $\widehat{T_xf}(\omega)
  \overline{\widehat{T_xf}(\omega\alpha)} = \alpha(x) \hat{f}(\omega) \overline{\hat{f}
    (\omega\alpha)} $, we have
  \begin{align*}
    w_f(x) = \sum_{j\in J} \int_{P_j} \int_{\ghat} \sum_{\alpha\in \LG_{\! j}^{\perp}} \alpha(x)
    \hat{f}(\omega) \overline{\hat{f}(\omega\alpha)} \overline{\hat{g}_p(\omega)}
    \hat{h}_p(\omega\alpha) \, d\mu_{\ghat}(\omega) \, d\mu_{P_j}(p).
  \end{align*}
  Let $\varphi_{\alpha,j}(p,\omega)$ denote the innermost summand in the right hand side expression
  above. By our standing hypothesis~\eqref{eq:Hypo3}, the function $\varphi_{\alpha,j}$ is
  $(\Sigma_{P_j} \otimes B_G)$-measurable for each $\alpha$. Applying  Beppo Levi's theorem  to the dual $\alpha$ local
  integrability condition yields that the function $\sum_\alpha \varphi_{\alpha,j}$ belongs to 
  $L^1(P_j \times \ghat)$ for each $j\in J$. An application of Fubini's theorem now gives:
  \begin{align*}
    w_f(x) = \sum_{j\in J} \int_{\ghat} \int_{P_j} \sum_{\alpha\in \bigcup\limits_{j \in J} \LG_{\!
        j}^{\perp}} \mathds{1}_{\LG_{\! j}^{\perp}}(\alpha) \varphi_{\alpha,j} (p,\omega) \,
    d\mu_{P_j}(p) \, d\mu_{\ghat}(\omega).
  \end{align*}
 Lebesgue's dominated convergence theorem then yields:
  \begin{align*}
    w_f(x) = \sum_{j\in J} \sum_{\alpha\in \bigcup\limits_{j\in J} \LG_{\! j}^{\perp}} \alpha(x)
    \int_{\ghat} \int_{P_j} \mathds{1}_{\LG_{\! j}^{\perp}}(\alpha) \hat{f}(\omega)
    \overline{\hat{f}(\omega\alpha)} \overline{\hat{g}_p(\omega)} \hat{h}_p(\omega\alpha) \,
    d\mu_{P_j}(p) \, d\mu_{\ghat}(\omega).
  \end{align*}
  By the dual $\alpha$ local integrability condition the summand belongs to $\ell^1(J\times
  \cup_{j\in J} \Gamma_{\! j}^{\perp})$ and we can therefore interchange the order of
  summations. Further, by Lebesgue's bounded convergence theorem, we can interchange the sum over $j
  \in J$ and the integral over $\supp{\hat{f}}\subset \ghat$. Hence,
\begin{align*}
  w_f(x) = \sum_{\alpha\in \bigcup\limits_{j\in J} \LG_{\! j}^{\perp}} \alpha(x) \int_{\ghat}
  \hat{f}(\omega) \overline{\hat{f}(\omega\alpha) } \sum_{j\in J \, : \, \alpha\in\LG_{\! j}^{\perp}
  } \int_{P_j} \overline{\hat{g}_p(\omega)} \hat{h}_p(\omega\alpha) \, d\mu_{P_j}(p) \,
  d\mu_{\ghat}(\omega).
  \end{align*}
  Finally, we arrive at:
  \begin{equation}
    w_f(x) = \sum_{\alpha\in
      \bigcup\limits_{j \in J} \LG_{\! j}^{\perp}} \alpha(x) \hat w(\alpha),  \quad \text{where} \quad 
    \hat w(\alpha) :=
    \int_{\ghat} \hat{f}(\omega) \overline{\hat{f}(\omega\alpha) }  t_{\alpha}(\omega) \,
    d\mu_{\ghat}(\omega).
    \label{eq:2502a}
  \end{equation}
  From the previous calculations and the dual $\alpha$-LIC, it follows that the convergence in
  (\ref{eq:2502a}) is absolute.  By the Weierstrass M-test, we see that $w_f$ is the uniform limit
  of a generalized Fourier series and thus an almost periodic, continuous function.

  We start by showing the implication (ii)$\Rightarrow$(i). 
  Inserting \eqref{eq:1702d} into \eqref{eq:2502a} for $x=0$ yields
  \begin{align*}
    w_f(0) & = \sum_{j\in J} \int_{P_j}\int_{\Gamma_{\!  j}} \langle f,T_{\gamma} g_p\rangle \langle T_{\gamma} h_p, f \rangle \, d\mu_{\Gamma_{\!  j}}(\gamma) \, d\mu_{P_j}(p) \\
    & = \sum_{\alpha\in \bigcup\limits_{j \in J} \LG_{\! j}^{\perp}} \alpha(0) \int_{\ghat}
    \hat{f}(\omega) \overline{\hat{f}(\omega\alpha) } \delta_{\alpha,1} \, d\mu_{\ghat}(\omega) =
    \langle f , f \rangle,
  \end{align*}
  and (i) follows by Lemma~\ref{lem:dual-frames-on-dense-subset}\eqref{enum:dense-frame-dual}. 

  For the converse implication (i)$\Rightarrow$(ii), we have
  \[
  w_f(x) = \sum_{j \in J} \int_{P_j}\int_{\Gamma_{\! j}} \langle T_x f,T_{\gamma} g_p\rangle \langle T_{\gamma}
  h_p, T_xf \rangle \, d\mu_{\Gamma_{ j}}(\gamma) \, d\mu_{P_j}(p) = \Vert f \Vert^2
  \]
  for each $f \in \cD$.  Consider now the function $z(x) := w_f(x) - \Vert f \Vert^2$.  We have
  shown that $w_f$ is continuous and by construction $z$ is identical to the zero function.
  Additionally, since $w_f$ equals an absolute convergent, generalized Fourier series, also $z$ can
  be expressed as an absolute convergent generalized Fourier series $\displaystyle z(x) =
  \sum_{\alpha\in\bigcup_{j \in J} \LG_{\! j}^{\perp}} \alpha(x) \hat{z}(\alpha)$, with
  \[
  \hat{z}(\alpha) \ = \ \begin{cases} \ \ \displaystyle \int_{\ghat} \absbig{\hat{f}(\omega)}^2
    t_{1}(\omega) \,
    d\mu_{\ghat}(\omega)  - \norm{f}^2 & \text{for} \ \alpha=1, \\[10pt]
    \ \ \displaystyle \int_{\ghat} \hat{f}(\omega) \overline{\hat{f}(\omega\alpha)} 
    t_{\alpha}(\omega) \, d\mu_{\ghat}(\omega) & \text{for} \ \alpha\in\bigcup_{j \in J}
    \LG_{\! j}^{\perp} \setminus\{1\}. \end{cases} 
  \] 
  By the uniqueness theorem for generalized Fourier series \cite[Theorem 7.12]{MR0481915}, the
  function $z(x)$ is identical to zero if, and only if, $\hat{z} (\alpha)= 0$ for all $\alpha \in
  \cup_{j \in J} \LG_{\! j}^{\perp}$.

  In case $\alpha=1$ we have $\int_{\ghat} \abssmall{\hat{f}(\omega)}^2\bigl(t_{1}(\omega)-1\bigr)
  d\mu_{\ghat}(\omega)=0$ for $f \in \cD$. Hence, since $\cD$ is dense in $L^2(G)$, we conclude that
  $t_{1}(\omega) = 1$ for $\almoste\ \omega\in\ghat$. For $\alpha\in\cup_{j \in J}
  \LG_{\! j}^{\perp} \setminus\{1\}$, we have
  \begin{equation}
    \label{eq:2502b} \int_{\ghat} \hat{f}(\omega) \overline{\hat{f}(\omega\alpha)}
    t_{\alpha}(\omega) \, d\mu_{\ghat}(\omega) = 0.
  \end{equation}
  Define the multiplication operator $M_{\overline{t_{\alpha} }}:L^2(\ghat)\to L^2(\ghat)$ by $
  M_{\overline{t_{\alpha} }}\hat{f}(\omega) = \overline{t_{\alpha}(\omega)} \hat{f}(\omega)$. This
  linear operator is bounded since by Proposition \ref{pr:bessel-implies-calderon-bound}
  $t_{\alpha}(\omega)\in L^\infty(\ghat)$. We can now rewrite the left hand side of \eqref{eq:2502b}
  as an inner-product:
  \[ 
  \langle \hat{f}, M_{\overline{t_{\alpha} }} T_{\alpha^{-1}} \hat{f} \rangle_{L^2(\ghat)} =0,
  \] 
  where $f \in\cD$.  Since $\mathcal D$ is dense in the complex Hilbert space $L^2(G)$, this implies
  that $M_{\overline{t_{\alpha} }} T_{\alpha^{-1}} = 0$. After multiplication with $T_\alpha$ from
  the right, we have $M_{\overline{t_{\alpha} }} = 0$ and therefore $t_{\alpha} = 0$.
\end{proof}

From Theorem~\ref{th:2001e} we easily obtain the corresponding characterization for tight frames. We
state it for Parseval frames only as it is just a matter of scaling.

\begin{theorem} \label{th:parseval-char-LCA} Suppose that the generalized translation invariant
  system \gti sa\-tis\-fi\-es the $\alpha$ local integrability condition. Then the following
  assertions are equivalent:
\begin{enumerate}[(i)]
 \item $\gti$ is a Parseval frame for $L^2(G)$,
 \item for each $\alpha \in \bigcup_{j \in J} \LG_{\! j}^{\perp}$ we have
\begin{equation*}
t_{\alpha} := \sum_{j\in J \, : \, \alpha \in \Gamma_{\! j}^{\perp}} \int_{P_j}
 \overline{\hat{g}_p(\omega)} \hat{g}_p(\omega\alpha ) \, d\mu_{P_j}(p)=
\delta_{\alpha,1} \quad \almoste\ \omega \in \ghat. 
\end{equation*}
\end{enumerate}
\end{theorem}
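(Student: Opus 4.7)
The plan is to deduce Theorem~\ref{th:parseval-char-LCA} from Theorem~\ref{th:2001e} by specializing to $h_p=g_p$. The two directions, however, require slightly different care because Theorem~\ref{th:2001e} hypothesizes that both GTI systems are Bessel, whereas in the present statement the Bessel property is not assumed a priori — only the $\alpha$-LIC. The key is that Lemma~\ref{lem:dual-frames-on-dense-subset}\eqref{enum:dense-frame-Parseval} bypasses the Bessel hypothesis for Parseval frames.

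For the implication (i)$\Rightarrow$(ii), I would argue as follows. If \gti is a Parseval frame, then it trivially satisfies the Bessel condition with bound $B=1$, and it is its own dual since the Parseval equation $\langle f,f\rangle=\sum_{j\in J}\int_{P_j}\int_{\Gamma_{\!j}}|\langle f,T_\gamma g_p\rangle|^2\,d\mu_{\Gamma_{\!j}}(\gamma)\,d\mu_{P_j}(p)$ is the duality relation of Lemma~\ref{lem:dual-frames-on-dense-subset}\eqref{enum:dense-frame-dual} with $h_p=g_p$. The $\alpha$-LIC is precisely the dual $\alpha$-LIC in that same case. Hence Theorem~\ref{th:2001e} applies and yields the $t_\alpha$-equations $t_\alpha=\delta_{\alpha,1}$.

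For the converse (ii)$\Rightarrow$(i), I cannot directly invoke Theorem~\ref{th:2001e} because we lack a standing Bessel assumption. Instead, I replay the calculation of the almost periodic function
\[
w_f(x)=\sum_{j\in J}\int_{P_j}\int_{\Gamma_{\!j}}\langle T_xf,T_\gamma g_p\rangle\langle T_\gamma g_p,T_xf\rangle\,d\mu_{\Gamma_{\!j}}(\gamma)\,d\mu_{P_j}(p)
\]
carried out in the proof of Theorem~\ref{th:2001e}: the $\alpha$-LIC (with $h_p=g_p$) is exactly what justifies the successive applications of Beppo Levi, Fubini, dominated convergence, and the subsequent rearrangement into the generalized Fourier series $w_f(x)=\sum_{\alpha\in\cup_j\Gamma_{\!j}^\perp}\alpha(x)\int_{\ghat}\hat f(\omega)\overline{\hat f(\omega\alpha)}t_\alpha(\omega)\,d\mu_{\ghat}(\omega)$. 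Substituting $t_\alpha=\delta_{\alpha,1}$ and evaluating at $x=0$ collapses the series to the single term $\int_{\ghat}|\hat f(\omega)|^2\,d\mu_{\ghat}(\omega)=\|f\|^2$, so that
\[
\sum_{j\in J}\int_{P_j}\int_{\Gamma_{\!j}}|\langle f,T_\gamma g_p\rangle|^2\,d\mu_{\Gamma_{\!j}}(\gamma)\,d\mu_{P_j}(p)=\|f\|^2\quad\text{for every }f\in\cD.
\]
Finally, since the GTI family is weakly measurable (as noted right after the standing hypotheses in Section~\ref{sec:gsi}) and the product measure $\mu_{M_j}=\mu_{P_j}\otimes\mu_{\Gamma_{\!j}}$ is $\sigma$-finite, Lemma~\ref{lem:dual-frames-on-dense-subset}\eqref{enum:dense-frame-Parseval} extends this identity from $\cD$ to all of $L^2(G)$, proving that \gti is a Parseval frame.

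The only real subtlety is in (ii)$\Rightarrow$(i): one must resist the temptation to just cite Theorem~\ref{th:2001e}, since the absence of an a priori Bessel assumption forces us to use the Parseval-specific part of Lemma~\ref{lem:dual-frames-on-dense-subset}. Once that is noted, everything else is a direct specialization, and the theorem follows.
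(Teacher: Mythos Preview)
Your proposal is correct and follows essentially the same route as the paper: for (i)$\Rightarrow$(ii) you observe that a Parseval frame is Bessel and self-dual so Theorem~\ref{th:2001e} applies; for (ii)$\Rightarrow$(i) you revalidate the Fourier-series expansion of $w_f$ (which only uses the $\alpha$-LIC, not the Bessel bound), evaluate at $x=0$ to get the Parseval identity on $\cD$, and then invoke Lemma~\ref{lem:dual-frames-on-dense-subset}\eqref{enum:dense-frame-Parseval} to pass to all of $L^2(G)$. Your explicit acknowledgment that Theorem~\ref{th:2001e} cannot be cited directly in the converse direction is exactly the point the paper handles by saying ``Formula~\eqref{eq:2502a} is still valid''.
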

\begin{proof} 
  We first remark that the integrals in (ii) indeed converge absolutely. This follows from two applications of the
  Cauchy-Schwarz' inequality (as in the proof of Theorem~\ref{th:2001e}), which gives:
  \[
 \sum_{j\in J \, : \, \alpha \in \Gamma_{\! j}^{\perp}} \int_{ P_j } \abs{\hat{g}_p
    (\omega)} \abs{\hat{g}_p(\omega\alpha)}  d\mu_{P_j}(p) \le 1.
  \]
  
  In view of Theorem \ref{th:2001e} we only have to argue that the assumption on the Bessel family
  can be omitted. If we assume (i), then clearly \gti is a Bessel family and (ii) follows from Theorem
  \ref{th:2001e}.

  Suppose that (ii) holds. Formula~(\ref{eq:2502a}) is still valid, where $w_f$ is defined as in~(\ref{eq:wf-definition}) with $h_p=g_p$.
  Setting $x=0$ in (\ref{eq:2502a}) yields 
  \[
  \Vert f \Vert^2 = \sum_{j\in J} \int_{P_j} \int_{\LG_{\! j}} \abs{\langle f, T_{\gamma} g_p\rangle }^2  d\mu_{\Gamma_{\! j}}(\gamma) \,
d\mu_{P_j}(p) \quad \text{for all $f \in \cD$.}
  \] 
  Finally, we
  conclude by Lemma~\ref{lem:dual-frames-on-dense-subset}\eqref{enum:dense-frame-Parseval} that \gti is a Parseval frame for $L^2(G)$.
\end{proof}

By virtue of Lemma \ref{thm:unitary-equi-duals}, we have the following extension of Theorem~\ref{th:2001e} and
  \ref{th:parseval-char-LCA}.
  \begin{corollary} \label{cor:extension} The characterization results in Theorem~\ref{th:2001e} and
    \ref{th:parseval-char-LCA} extend to systems that are unitarily equivalent to generalized
    translation invariant systems.
\end{corollary}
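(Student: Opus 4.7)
The plan is to reduce the statement to the two characterization theorems already proven by invoking Lemma~\ref{thm:unitary-equi-duals} as the bridge between the unitarily equivalent family and its GTI representative. Concretely, assume $\cup_{j\in J}\{f_{j,k}\}_{k\in M_j}$ and $\cup_{j\in J}\{\tilde f_{j,k}\}_{k\in M_j}$ in a separable Hilbert space $\cH$ are unitarily equivalent to the GTI systems \gti[g] and \gti[h] in $L^2(G)$ via a common unitary $U\colon L^2(G)\to\cH$, point isomorphisms $\iota_j\colon M_j \to P_j\times\LG_{\!j}$, and unimodular measurable phases $c_{j,k}$. Assume the GTI representatives satisfy the Bessel condition and the dual $\alpha$-LIC (for the extension of Theorem~\ref{th:2001e}), respectively the $\alpha$-LIC (for the extension of Theorem~\ref{th:parseval-char-LCA}).

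First I would observe that the Bessel property transfers cleanly under unitary equivalence, since for any $f\in\cH$,
\[
\sum_{j\in J}\int_{M_j} \abs{\innerprods{f}{f_{j,k}}}^2 \, d\mu_{M_j}(k) = \sum_{j\in J}\int_{P_j\times\LG_{\!j}} \abs{\innerprods{U^\ast f}{T_\gamma g_p}}^2 \, d(\mu_{P_j}\otimes\mu_{\LG_{\!j}})(p,\gamma),
\]
using $|c_{j,k}|=1$ and that $\iota_j$ is measure-preserving, so that any Bessel bound for \gti[g] carries over with the same constant via $\norm{U^\ast f}=\norm{f}$. The analogous identity holds for $\tilde f_{j,k}$. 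Hence the hypotheses of Theorem~\ref{th:2001e} or Theorem~\ref{th:parseval-char-LCA} are conditions on the GTI representative alone, and nothing additional need be assumed on the abstract family in $\cH$.

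Next I would apply Lemma~\ref{thm:unitary-equi-duals}, in the g-frame form noted in the paper, to conclude that $\cup_j\{f_{j,k}\}_{k\in M_j}$ and $\cup_j\{\tilde f_{j,k}\}_{k\in M_j}$ are dual frames in $\cH$ if, and only if, \gti[g] and \gti[h] are dual frames in $L^2(G)$. An application of Theorem~\ref{th:2001e} on the GTI side then yields the $t_{\alpha}$-equations~\eqref{eq:1702d} as the desired characterization for the unitarily equivalent family. The Parseval statement follows by specializing to $\tilde f_{j,k}=f_{j,k}$ and $\tilde g_p=g_p$ and invoking Theorem~\ref{th:parseval-char-LCA} instead.

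The only non-routine point -- and the most likely obstacle -- is rigorously justifying the use of Lemma~\ref{thm:unitary-equi-duals} in the g-frame setting with disjoint unions indexed by $J$, since the lemma was stated for single-index families. As remarked just after the proof of the lemma, this extension is immediate: the duality identity in the proof of Lemma~\ref{thm:unitary-equi-duals} is performed index-by-index on each $(M_j,\Sigma_j,\mu_j)$ and then summed over $j\in J$, with summability guaranteed by the dual frame identity itself. No further analysis is required, so the corollary follows.
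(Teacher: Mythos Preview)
Your proposal is correct and follows exactly the approach the paper intends: the paper provides no formal proof environment for this corollary at all, simply prefacing it with ``By virtue of Lemma~\ref{thm:unitary-equi-duals}\dots'', so your write-up is a faithful (and more detailed) elaboration of the same idea. Your remark about the g-frame extension of Lemma~\ref{thm:unitary-equi-duals} matches the paper's own comment that the lemma ``is also easily transferred to this new setup.''
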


\subsection{On sufficient conditions and the local integrability conditions}
\label{sec:suff-cond-local}
Let us now turn to sufficient conditions for a generalized translation invariant system to be a
Bessel family or a frame. Proposition \ref{pr:suff-cond-for-A-and-B-bound} is a generalization of
the results in, e.g., \cite{MR2420867} and \cite{OleSaySon2014}, which state the corresponding
result for GSI systems in the euclidean space and locally compact abelian groups, respectively. The
result is as follows.
\begin{proposition} 
  \label{pr:suff-cond-for-A-and-B-bound} Consider the generalized translation invariant system \gti.
\begin{enumerate}[(i)]
\item If
\begin{equation} \label{eq:CC-condition} B:= \esssup_{\omega\in \ghat} \sum_{j\in J} \int_{P_j} \sum_{\alpha\in
\LG_{\! j}^{\perp}}  \absbig{ \hat{g}_p(\omega) \hat{g}_p(\omega\alpha)} \, d\mu_{P_j}(p) < \infty,
\end{equation}
then \gti is a Bessel family with bound
$B$. 
\item Furthermore, if also
\[ A:= \essinf_{\omega\in\ghat} \Big( \sum_{j\in J} \int_{P_j} \abs{\hat
g_p(\omega)}^2  d\mu_{P_j}(p) - \sum_{j\in J} \int_{P_j} \sum_{\alpha\in
\LG_{\! j}^{\perp}\setminus\{0\}}  \abs{\hat{g}_p(\omega) \hat{g}_p(\omega
\alpha) }  d\mu_{P_j}(p) \Big) > 0, \]
then \gti is a frame for $L^2(G)$ with bounds $A$ and $B$.
\end{enumerate}
\end{proposition}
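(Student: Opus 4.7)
The starting point is Lemma~\ref{le:1902a} applied with $f_1=f_2=f\in\mathcal D$ and $\varphi=\psi=g_p$, which rewrites $\int_{\LG_{\!j}}\abssmall{\langle f,T_\gamma g_p\rangle}^2\,d\mu_{\LG_{\!j}}(\gamma)$ as a sum over $\alpha\in\LG_{\!j}^{\perp}$ of an integral involving $\hat f(\omega)\overline{\hat f(\omega\alpha)}\,\overline{\hat g_p(\omega)}\hat g_p(\omega\alpha)$. Summing over $j$ and integrating over $P_j$, with Tonelli justified by the absolutely convergent majorant supplied by \eqref{eq:CC-condition}, yields an explicit identity for $\sum_{j}\int_{P_j}\int_{\LG_{\!j}}\abssmall{\langle f,T_\gamma g_p\rangle}^2\,d\mu_{\LG_{\!j}}(\gamma)\,d\mu_{P_j}(p)$ on the dense subspace $\mathcal D$.

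For part (i), I pass to absolute values and apply the AM-GM inequality $\abssmall{\hat f(\omega)\hat f(\omega\alpha)}\le\tfrac12\bigl(\abssmall{\hat f(\omega)}^2+\abssmall{\hat f(\omega\alpha)}^2\bigr)$ pointwise. The first term, after interchanging the order of summation and integration, is bounded by $\tfrac{B}{2}\norm{f}^2$ via Plancherel and the hypothesis \eqref{eq:CC-condition}. For the second term I make the change of variables $\omega\mapsto\omega\alpha^{-1}$, exploiting translation invariance of the Haar measure on $\ghat$ and the fact that $\alpha\mapsto\alpha^{-1}$ is a bijection of $\LG_{\!j}^{\perp}$; this returns the expression to the same form as the first term and contributes another $\tfrac{B}{2}\norm{f}^2$. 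The Bessel bound is thus established on $\mathcal D$, and it extends to all of $L^2(G)$: for $f_n\to f$ in $L^2(G)$ with $f_n\in\mathcal D$ the inner products $\langle f_n,T_\gamma g_p\rangle$ converge pointwise in $(p,\gamma)$, so Fatou's lemma applied to the countable sum over $j\in J$ yields the same upper bound for $f$.

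For part (ii), I split off the $\alpha=1$ contribution from the same identity, writing the frame sum as
\[
\int_{\ghat}\abssmall{\hat f(\omega)}^2 \sum_{j\in J}\int_{P_j}\abssmall{\hat g_p(\omega)}^2\,d\mu_{P_j}(p)\,d\mu_{\ghat}(\omega)\;+\;R(f),
\]
where $R(f)$ collects all terms with $\alpha\in\LG_{\!j}^{\perp}\setminus\{1\}$. Repeating the AM-GM and change-of-variables argument from (i), now with the diagonal excluded, bounds $\abs{R(f)}$ above by $\int_{\ghat}\abssmall{\hat f(\omega)}^2\sum_{j}\int_{P_j}\sum_{\alpha\ne 1}\abssmall{\hat g_p(\omega)\hat g_p(\omega\alpha)}\,d\mu_{P_j}(p)\,d\mu_{\ghat}(\omega)$. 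Subtracting then produces the lower bound $A\norm{f}^2$ on $\mathcal D$. Since the Bessel property proved in (i) makes $f\mapsto \sum_j\int_{P_j}\int_{\LG_{\!j}}\abssmall{\langle f,T_\gamma g_p\rangle}^2$ a continuous function of $f\in L^2(G)$, the lower frame inequality passes to $L^2(G)$ by density.

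The main obstacle is purely bookkeeping: correctly unfolding Lemma~\ref{le:1902a}, managing the absolute-value symmetrization, and reindexing $\LG_{\!j}^{\perp}$ under $\alpha\mapsto\alpha^{-1}$ so that the off-diagonal contributions reassemble into the Calder\'on-type majorant appearing in \eqref{eq:CC-condition}. No machinery beyond Lemma~\ref{le:1902a}, Plancherel, Tonelli/Fatou, and the invariance of the Haar measure on $\ghat$ is needed.
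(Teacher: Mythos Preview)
Your proof is correct and is precisely the standard argument the paper defers to when it writes ``With a few adaptations the result follows from the corresponding proofs in \cite{OleSaySon2014} and \cite{MR2420867}.'' The paper gives no further details, so there is nothing to compare beyond noting that your Lemma~\ref{le:1902a} expansion followed by the AM--GM symmetrization and the reindexing $\alpha\mapsto\alpha^{-1}$ on $\LG_{\!j}^{\perp}$ is exactly the classical route taken in those references, adapted to the GTI setting.
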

\begin{proof}
With a few adaptations the result follows from the corresponding proofs in \cite{OleSaySon2014} and \cite{MR2420867}. 
\end{proof}
We refer to \eqref{eq:CC-condition} as the absolute CC-condition, or for short, CC-condition \cite{MR1814424}. 
Proposition \ref{pr:suff-cond-for-A-and-B-bound} is useful in applications as a mean to verify that
a given family indeed is Bessel, or even a frame. Moreover, in relation to the characterizing
results in Theorem \ref{th:2001e} and \ref{th:parseval-char-LCA}, the
CC-condition~(\ref{eq:CC-condition}) is sufficient for the $\alpha$-LIC to
hold. In contrast, we remark that, by Example~\ref{ex:0402e} in
Section~\ref{sec:exampl-local-integr}, the CC-condition does not imply the LIC.
\begin{lemma} \label{le:CC-implies-alphaLIC} If \gti and \gti[h] satisfy
\[\esssup_{\omega\in\ghat} \sum_{j\in J} \int_{P_j} \sum_{\alpha\in \Gamma_{\! j}^{\perp}} \big\vert \hat g_p(\omega) \hat h_p(\omega \alpha) \big\vert \, d\mu_{P_j}(p) < \infty \phantom{,}\]
and
\[\esssup_{\omega\in\ghat} \sum_{j\in J} \int_{P_j} \sum_{\alpha\in \Gamma_{\! j}^{\perp}} \big\vert \hat g_p(\omega \alpha) \hat h_p(\omega ) \big\vert \, d\mu_{P_j}(p) < \infty,\]
then the dual $\alpha$ local integrability condition is satisfied. Furthermore, 
if \gti sa\-tis\-fies the CC-condition~(\ref{eq:CC-condition}), then the $\alpha$ local integrability condition is satisfied.
\end{lemma}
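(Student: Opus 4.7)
My plan is to reduce the dual $\alpha$-LIC via Lemma~\ref{le:ASC-compact-sets-and-functions} to showing finiteness of a certain integral over compact subsets of $\ghat \setminus E$, and then to verify this finiteness directly by Tonelli's theorem and the given essential supremum bound. The second assertion should then follow from the first by specializing to $h_p = g_p$.

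In more detail, I would first invoke Lemma~\ref{le:ASC-compact-sets-and-functions}, which reformulates the dual $\alpha$-LIC for \gti and \gti[h] as the condition that
\[
  \sum_{j \in J} \int_{P_j} \sum_{\alpha \in \Gamma_{\!j}^{\perp}} \int_{K \cap \alpha^{-1} K} \abssmall{\hat{g}_p(\omega)\hat{h}_p(\omega\alpha)}\, d\mu_{\ghat}(\omega)\, d\mu_{P_j}(p) < \infty
\]
for every compact $K \subseteq \ghat \setminus E$. The integrand is nonnegative and, by the standing hypothesis~\eqref{eq:Hypo3}, jointly measurable in $(p,\omega)$. Tonelli's theorem then lets me pull the $\omega$-integration outside all the sums and $p$-integrations; enlarging the domain of integration from $K \cap \alpha^{-1}K$ to $K$ yields the upper bound
\[
  \int_K \Big( \sum_{j \in J} \int_{P_j} \sum_{\alpha \in \Gamma_{\!j}^{\perp}} \abssmall{\hat{g}_p(\omega)\hat{h}_p(\omega\alpha)}\, d\mu_{P_j}(p) \Big)\, d\mu_{\ghat}(\omega).
\]
By the first hypothesis the parenthesized quantity is essentially bounded by some finite $B$, and $\mu_{\ghat}(K) < \infty$ since $K$ is compact and $\mu_{\ghat}$ is a Radon measure; hence the expression is at most $B\,\mu_{\ghat}(K) < \infty$, completing the first part. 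The second hypothesis is not strictly required for this direct argument but can be used in an entirely symmetric way via the change of variable $\omega \mapsto \omega \alpha^{-1}$ together with the reindexing $\alpha \mapsto \alpha^{-1}$ on the annihilator subgroup $\Gamma_{\!j}^{\perp}$.

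For the CC-condition implication, I would simply specialize the first part to $h_p = g_p$: under this substitution both hypotheses collapse to the CC-condition~\eqref{eq:CC-condition} verbatim, so the $\alpha$-LIC for \gti follows at once. I do not anticipate a real obstacle; the one step warranting mild care is the justification of Tonelli's theorem, which is supplied by the standing hypotheses together with the countability of $J$ and of each $\Gamma_{\!j}^{\perp}$.
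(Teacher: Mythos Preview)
Your argument is correct, but it follows a different route from the paper's. The paper works directly with the defining expression~\eqref{eq:asc} of the dual $\alpha$-LIC: it applies the Cauchy--Schwarz inequality with respect to the measure $\absbig{\hat g_p(\omega)\hat h_p(\omega\alpha)}\,d\mu_{\ghat}(\omega)\,d\mu_{P_j}(p)$ to split the factor $\abssmall{\hat f(\omega)}\cdot\abssmall{\hat f(\omega\alpha)}$, producing one bracket controlled by the first hypothesis and another (after the substitution $\omega\mapsto\omega\alpha^{-1}$ and reindexing $\alpha\mapsto\alpha^{-1}$ in $\Gamma_{\!j}^{\perp}$) controlled by the second. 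Both essential-supremum bounds are therefore genuinely used.

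Your approach instead passes through Lemma~\ref{le:ASC-compact-sets-and-functions}, enlarges $K\cap\alpha^{-1}K$ to $K$, and invokes Tonelli; as you correctly observe, this establishes the dual $\alpha$-LIC using only the \emph{first} of the two hypotheses. In that sense your argument is both more elementary (no Cauchy--Schwarz) and slightly sharper. What the paper's Cauchy--Schwarz route buys is a self-contained argument that does not rely on the compact-set reformulation and treats the two hypotheses symmetrically; what your route buys is the observation that either hypothesis alone already suffices. Your derivation of the second assertion by specializing $h_p=g_p$ matches the paper's intent.
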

\begin{proof}
By applications of Cauchy-Schwarz' inequality, we find
\begin{align*}
& \sum_{j\in J} \int_{P_j} \sum_{\alpha\in \Gamma_{\! j}^{\perp}} \int_{\ghat} \vert\hat f(\omega) \hat f(\omega\alpha) \hat g_p(\omega) \hat h_p(\omega \alpha) \vert \, d\mu_{\ghat}(\omega) \, d\mu_{P_j}(p) \\
& \le \bigg[ \sum_{j\in J} \int_{P_j}  \sum_{\alpha\in \Gamma_{\! j}^{\perp}} \int_{\ghat} \vert\hat f(\omega) \vert^2 \vert \hat g_p(\omega) \hat h_p(\omega \alpha) \vert \, d\mu_{\ghat}(\omega) \, d\mu_{P_j}(p) \bigg]^{1/2} \\
& \qquad \qquad \qquad \qquad  \times \ \bigg[ \sum_{j\in J} \int_{P_j} \sum_{\alpha\in \Gamma_{\! j}^{\perp}} \int_{\ghat} \vert\hat f(\omega\alpha) \vert^2 \vert \hat g_p(\omega) \hat h_p(\omega \alpha) \vert \, d\mu_{\ghat}(\omega) \, d\mu_{P_j}(p) \bigg]^{1/2} \\
& = \bigg[ \int_{\ghat} \vert\hat f(\omega) \vert^2 \sum_{j\in J} \int_{P_j} \sum_{\alpha\in \Gamma_{\! j}^{\perp}} \vert \hat g_p(\omega) \hat h_p(\omega \alpha) \vert \, d\mu_{P_j}(p) \, d\mu_{\ghat}(\omega) \bigg]^{1/2} \\
& \qquad \qquad \qquad \qquad \times \ \bigg[ \int_{\ghat} \vert\hat f(\omega) \vert^2 \sum_{j\in J} \int_{P_j} \sum_{\alpha\in \Gamma_{\! j}^{\perp}} \vert \hat g_p(\omega\alpha) \hat h_p(\omega) \vert \, d\mu_{P_j}(p) \, d\mu_{\ghat}(\omega) \bigg]^{1/2} < \infty. 
\end{align*}
\end{proof}

Finally, we show that the LIC implies the (dual) $\alpha$-LIC. The precise statement is as follows.
\begin{lemma} \label{le:LIC} 
If both \gti and \gti[h] satisfy the local integrability condition \eqref{eq:LIC},
then \gti and \gti[h] satisfy the \emph{dual} $\alpha$ local integrability condition. In particular, if \gti satisfies the local integrability condition, then it also satisfies the $\alpha$ local integrability condition. 
\end{lemma}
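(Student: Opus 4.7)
The plan is to reduce the statement to the compact-set reformulations of both integrability conditions and then use a pointwise AM--GM estimate that splits the cross-term $\absbig{\hat g_p(\omega)\hat h_p(\omega\alpha)}$ into a pure $|\hat g_p|^2$-term and a pure $|\hat h_p|^2$-term. By Lemma~\ref{le:ASC-compact-sets-and-functions}, verifying the dual $\alpha$-LIC for \gti and \gti[h] is equivalent to verifying, for each compact $K\subseteq\ghat\setminus E$, that
\[
\sum_{j\in J}\int_{P_j}\sum_{\alpha\in \LG_{\! j}^{\perp}}\int_{K\cap\alpha^{-1}K}\absbig{\hat{g}_p(\omega)\hat{h}_p(\omega\alpha)}\,d\mu_{\ghat}(\omega)\,d\mu_{P_j}(p)<\infty.
\]
By the analogous reformulation~(\ref{eq:LIC-with-K}), the LIC hypotheses for \gti and \gti[h] give exactly the corresponding bounds with $\absbig{\hat{g}_p(\omega)}^2$ and $\absbig{\hat{h}_p(\omega)}^2$ in place of the cross-term, on the same set $K\cap\alpha^{-1}K$.

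The central step is the pointwise AM--GM estimate
\[
\absbig{\hat{g}_p(\omega)\hat{h}_p(\omega\alpha)}\le \tfrac{1}{2}\Bigl(\absbig{\hat{g}_p(\omega)}^2+\absbig{\hat{h}_p(\omega\alpha)}^2\Bigr),
\]
which, after interchanging sums and integrals (all integrands are non-negative, so Tonelli applies without issue), splits the target into two halves. The first half is precisely the compact-set LIC for \gti evaluated on $K$, and is therefore finite. For the second half, I perform the change of variables $\omega'=\omega\alpha$ in the innermost integral: translation invariance of $\mu_{\ghat}$, together with the identity $(K\cap\alpha^{-1}K)\alpha=K\alpha\cap K$ and the commutativity $K\alpha=\alpha K$ available in the abelian group $\ghat$, yields
\[
\int_{K\cap\alpha^{-1}K}\absbig{\hat{h}_p(\omega\alpha)}^2\,d\mu_{\ghat}(\omega)=\int_{K\cap \alpha K}\absbig{\hat{h}_p(\omega)}^2\,d\mu_{\ghat}(\omega).
\]
Finally, since $\LG_{\! j}^{\perp}$ is a group, the bijection $\alpha\mapsto\alpha^{-1}$ rewrites the sum $\sum_{\alpha\in\LG_{\!j}^{\perp}}\int_{K\cap\alpha K}|\hat{h}_p|^2$ as $\sum_{\alpha\in\LG_{\!j}^{\perp}}\int_{K\cap\alpha^{-1}K}|\hat{h}_p|^2$, which is exactly the compact-set LIC for \gti[h] on $K$, hence finite as well.

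Combining the two halves yields the dual $\alpha$-LIC, and the special case $g_p=h_p$ is then the $\alpha$-LIC itself, giving the second assertion. The only subtlety is the bookkeeping involved in the change of variables and the re-indexing $\alpha\mapsto\alpha^{-1}$, which is where care is needed to match the compact set $K\cap\alpha^{-1}K$ appearing in the reformulation~(\ref{eq:LIC-with-K}); once aligned, the proof amounts to a direct combination of AM--GM with the two LIC hypotheses.
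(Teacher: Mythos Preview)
Your proof is correct and shares the same core idea as the paper's proof: the AM--GM inequality $2\abs{cd}\le\abs{c}^2+\abs{d}^2$ is used to split the cross-term into a pure $\abs{\hat g_p}^2$-contribution and a pure $\abs{\hat h_p}^2$-contribution, after which a change of variables $\omega\mapsto\omega\alpha$ and a re-indexing $\alpha\mapsto\alpha^{-1}$ identify the two halves with the LIC for \gti and \gti[h], respectively.

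The route differs slightly. The paper works directly with the $\hat f$-weighted definition~\eqref{eq:asc} and therefore first applies the Cauchy--Schwarz inequality on the $\ghat$-integral to separate $\hat f(\omega)\hat f(\omega\alpha)$ before invoking AM--GM on the resulting product of square roots. You instead pass to the compact-set reformulations (Lemma~\ref{le:ASC-compact-sets-and-functions} and~\eqref{eq:LIC-with-K}) at the outset, which removes the $\hat f$-factors and lets you apply AM--GM pointwise on $K\cap\alpha^{-1}K$ without any Cauchy--Schwarz step. Your approach is marginally more elementary and makes the role of the translation-invariance of $\mu_{\ghat}$ and the group symmetry $\alpha\mapsto\alpha^{-1}$ in $\LG_{\!j}^{\perp}$ very explicit; the paper's version has the advantage of staying closer to the original definitions and of not invoking the equivalence in Lemma~\ref{le:ASC-compact-sets-and-functions}. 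Either way, the substance is the same.
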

\begin{proof}
By use of Cauchy-Schwarz' inequality and $2\abs{cd}\le\abs{c}^2+\abs{d}^2$, we have
\begin{align*}
  & \sum_{j\in J} \int_{P_j} \sum_{\alpha\in \Gamma_{\!  j}^{\perp}} \int_{\ghat} \absbig{ \hat{f}(\omega) \hat  f(\omega\alpha)\hat{g}_p(\omega)\hat{h}_p(\omega\alpha) } \, d\mu_{\ghat}(\omega) \, d\mu_{P_j}(p) \\
  & \le \sum_{j\in J} \int_{P_j} \sum_{\alpha\in \Gamma_{\!  j}^{\perp}} \bigg( \int_{\alpha^{-1} \supp \hat{f}} \absbig{ \hat{f}(\omega) \hat{h}_p(\omega\alpha) }^2  d\mu_{\ghat}(\omega) \bigg)^{1/2} \bigg( \int_{\supp \hat{f}} \absbig{ \hat  f(\omega\alpha)\hat{g}_p(\omega)}^2  d\mu_{\ghat}(\omega) \bigg)^{1/2}   d\mu_{P_j}(p) \\
  & = \sum_{j\in J} \int_{P_j} \sum_{\alpha\in \Gamma_{\!  j}^{\perp}} \bigg( \int_{\supp \hat{f}} \absbig{ \hat{f}(\omega\alpha^{-1}) \hat{h}_p(\omega)}^2  d\mu_{\ghat}(\omega) \bigg)^{1/2} \bigg( \int_{\supp \hat{f}} \absbig{ \hat  f(\omega\alpha)\hat{g}_p(\omega)}^2  d\mu_{\ghat}(\omega) \bigg)^{1/2}   d\mu_{P_j}(p)\\
  & \le \tfrac{1}{2} \sum_{j\in J} \int_{P_j} \sum_{\alpha\in \Gamma_{\!  j}^{\perp}} \int_{\supp
    \hat{f}} \absbig{ \hat{f}(\omega\alpha^{-1}) \hat{h}_p(\omega) }^2 \,
  d\mu_{\ghat}(\omega)  \, d\mu_{P_j}(p) \\
  & \quad \quad \ + \tfrac{1}{2} \sum_{j\in J} \int_{P_j} \sum_{\alpha\in \Gamma_{\!  j}^{\perp}}
  \int_{\supp \hat{f}} \absbig{ \hat{f}(\omega\alpha) \hat{g}_p(\omega) }^2 \,
  d\mu_{\ghat}(\omega)  \, d\mu_{P_j}(p) < \infty,
\end{align*}
and the statements follow.
\end{proof}

The relationships between the various conditions considered above are summarized in the diagram
below. To simplify the presentation we do not consider dual frames. An arrow means that the
assumption at the tail of the arrow implies the assumption at the head. A crossed out arrow means
that one can find a counter example for that implication; clearly, implications to the left in the
top line are also not true in general.
\[
\begin{tikzcd}
\text{CC} \arrow[Rightarrow,notarrow]{d} \arrow[Rightarrow]{r} \arrow[Rightarrow]{dr} &  \text{Bessel}  \arrow[Rightarrow]{r} & \text{Calder\'on integral} < B  \\
\text{LIC} \arrow[Rightarrow]{r} \arrow[Leftarrow,notarrow,out=-30,in=-150]{r}  &   \alpha\text{-LIC} \arrow[Rightarrow]{r} \arrow[Leftarrow,notarrow,out=30,in=150]{r} & \left(\text{$t_\alpha$-eqns. $\Leftrightarrow$ Parseval}\right)
\end{tikzcd}
\]
The crossed out arrows are shown by Example~\ref{ex:0402e} and Example~\ref{ex:talpha-not-satisfied} in the next section. 

\subsection{Two examples on the role of the local integrability conditions}
\label{sec:exampl-local-integr}

In this section we consider two key examples. Both examples take place in $\ell^2(\Z)$; however,
they can be extended to $L^2(\R)$, see \cite{MR2066821}. The first example, Example \ref{ex:0402e},
shows that for a GTI system the $\alpha$ local integrability condition is strictly weaker than the
local integrability condition.

\begin{example} 
  \label{ex:0402e}
  Let $G=\Z$, $N\in\N$, $N\ge 2$ and consider the co-compact subgroups $ \LG_{\! j} = N^j\Z,
  j\in\N.$ Note that $\ghat$ can be identified with the half-open unit interval $\itvco{0}{1}$ under
  addition modulo one. To each $\LG_{\! j}$ we associate $N^j$ functions $g_{j,p}$, for
  $p=0,1,\ldots,N^j-1$. Each function $g_{j,p}$ is defined by its Fourier transform
  \[
  \hat{g}_{j,p} = (N-1)^{1/2}N^{-j/2} \, \mathds{1}_{[p/N^j,(p+1)/N^j)}.
  \]
  The factor $(N-1)^{1/2}$ is for normalization purposes and does not play a role in the calculations. The annihilator of each $\Gamma_{\! j}$ is given by $\LG_{\!j}^{\perp}= N^{-j} \Z \cap [0,1)$. Note that the number of elements in $\LG_{\! j}^{\perp}$ is $N^j$. We equip both $G$ and $\Gamma_{\! j}^{\perp}$ with the counting measure, this implies that the measure on $\Gamma_{\! j}$ is the counting measure multiplied by $N^j$. For the generalized translation invariant system $\cup_{j\in \mathbb N} \{T_{\gamma}g_{j,p}\}_{\gamma\in \Gamma_{\! j},p=0,1,\ldots,N^{j}-1}$ we show the following: (i) the LIC is violated, (ii) the $\alpha$-LIC holds, (iii) the system is a Parseval frame for $\ell^2(\Z)$. It then follows from Theorem~\ref{th:parseval-char-LCA} that the $t_{\alpha}$-equations are satisfied. \\
  Ad (i). In order for the LIC to hold we need
  \[ \sum_{j=1}^{\infty} \sum_{p=0}^{N^j-1} \sum_{\alpha\in \LG_{\! j}^{\perp}} \int_{K \cap
    (K-\alpha)} \abs{ \hat{g}_{j,p}(\omega)}^2 \, d\omega \ < \ \infty\] for all compact $K\subseteq
  [0,1)$, see
  Lemma~\ref{le:ASC-compact-sets-and-functions}. 
  In particular for $K=\ghat$, we find
\begin{align*}
 & \quad \ \sum_{j=1}^{\infty} \sum_{p=0}^{N^j-1} \sum_{\alpha\in \LG_{\! j}^{\perp}}
\int_0^1\abs{\hat{g}_{j,p}(\omega)}^2
\, d\omega  = (N-1) \sum_{j=1}^{\infty} \sum_{p=0}^{N^j-1} \sum_{\alpha\in \LG_{\! j}^{\perp}}
 N^{-2j} \\ & = (N-1) \sum_{j=1}^{\infty} \sum_{p=0}^{N^j-1} N^{-j} = (N-1) \sum_{j=1}^{\infty} 1 = \infty.
\end{align*}
Therefore, the local integrability condition is not satisfied. \\
\noindent Ad (ii). By Lemma \ref{le:ASC-compact-sets-and-functions} it suffices to show that  
\[ \sum_{j=1}^{\infty} \sum_{p=0}^{N^j-1} \sum_{\alpha\in\LG_{\! j}^{\perp}} \int\limits_{\ghat \cap (\ghat-\alpha)} \absbig{\hat{g}_{j,p}(\omega)  \hat{g}_{j,p}(\omega+\alpha)} \, d\omega  < \infty. \]
Due to the support of $\hat{g}_{j,p}$ we have $\abssmall{ \overline{\hat{g}_{j,p}(\omega)}  \hat{g}_{j,p}(\omega+\alpha)}=0$ for $\alpha\in \LG_{\! j}^{\perp} \setminus\{ 0\}$. We thus find that
\begin{align*}
  & \quad \ \sum_{j=1}^{\infty} \sum_{p=0}^{N^j-1} \sum_{\alpha\in\LG_{\! j}^{\perp}} \int_0^1 \abs{\hat{g}_{j,p}(\omega)  \hat{g}_{j,p}(\omega+\alpha)} d\omega = \sum_{j=1}^{\infty} \sum_{p=0}^{N^j-1} \int_0^1 \abs{\hat{g}_{j,p}(\omega)}^2 \, d\omega \\
  & = (N-1) \sum_{j=1}^{\infty} \sum_{p=0}^{N^j-1} N^{-2j} = (N-1) \sum_{j=1}^{\infty} N^{-j} = 1.
\end{align*}
\noindent Ad (iii). Note that $\sum_{p=0}^{N^j-1} \abssmall{\hat{g}_{j,p}(\omega)}^2=(N-1)N^{-j}\, \mathds{1}_{[0,1)}(\omega)$ for $\omega\in[0,1)$ and for all $j\in\N$. Using the frame bound estimates from Proposition \ref{pr:suff-cond-for-A-and-B-bound}, we have 
\begin{align*}
 B & = \esssup_{\omega\in [0,1)} \sum_{j=1}^{\infty} \sum_{p=0}^{N^j-1} \sum_{\alpha\in
\LG_{\! j}^{\perp}}  \abs{\hat{g}_{j,p}(\omega) \hat{g}_{j,p}(\omega+\alpha)} \\
& = \esssup_{\omega\in [0,1)} \sum_{j=1}^{\infty} \sum_{p=0}^{N^j-1}   \abs{\hat{g}_{j,p}(\omega)}^2  = \esssup_{\omega\in [0,1)} (N-1) \sum_{j=1}^{\infty} N^{-j} \mathds{1}_{[0,1)}(\omega) = 1.
\end{align*}
In the same way, for the lower frame bound, we find
\[ 
A = \essinf_{\omega\in[0,1)} \Big( \sum_{j=1}^{\infty} \sum_{p=0}^{N^j-1} \abs{\hat
g_{j,p}(\omega)}^2 - \sum_{j=1}^{\infty} \sum_{p=0}^{N^j-1} \sum_{\alpha\in
\LG_{\! j}^{\perp}\setminus\{0\}}  \abs{\hat{g}_{j,p}(\omega) \hat{g}_{j,p}(\omega +
\alpha)}\Big) = 1. 
\]
These calculations also show that $\cup_{j\in\N} \{T_{\gamma} g_{j,p}\}_{\gamma\in \LG_{\! j},p=0,1,\ldots, N^j-1}$ is actually a union over $j \in\N$ of tight frames $\{T_{\gamma} g_{j,p}\}_{\gamma\in \LG_{\! j},p=0,1,\ldots, N^j-1}$ each with frame bound $N^{-j}$. Furthermore, we see that the CC-condition is satisfied, even though the LIC fails. Hence, the CC-condition does not imply LIC (however, by Lemma~\ref{le:CC-implies-alphaLIC} it does imply the $\alpha$-LIC). 
\end{example}

The following example is inspired by similar constructions in \cite{MR2066821} and \cite{MR2283810}. It shows two points. Firstly, the $\alpha$ local integrability condition cannot be removed in Theorem~\ref{th:parseval-char-LCA}. Secondly, 
it is possible for a GTI Parseval frame to satisfy the $t_\alpha$-equations even though the $\alpha$ local integrability condition fails. We show these observations in the reversed order.

\begin{example} \label{ex:talpha-not-satisfied}
  Let $G=\Z$ and for each $m \in \Z$ and $k \in \N$, let $[m]_k$ denote the residue class of $m$
  modulo $k$. Then, for $\tau_j=2^{j-1}-1$, $j \in \N$,
  \[
  \Z = \bigcup_{j \in \N} [\tau_j]_{2^j} = [0]_{2} \cup [1]_{4}\cup [3]_{8}\cup [7]_{16}\cup [15]_{32}
  \dots ,
  \]
  where the union is disjoint. Now set $g_j=N^{-j/2} \mathds{1}_{\tau_j}$ and $\LG_{\! j}=N^j\Z$ for $N=2$. The GTI system $\cup_{j\in\N} \set{T_\gamma g_j }_{\gamma \in \LG_{\! j}}$ is essentially a
  reordering of the standard orthonormal basis $\set{e_k}_{k \in \Z}$ for $\ell^2(\Z)$. The factor $N^{-j/2}$ in the definition of $g_j$ is due to the fact that we equip $\Gamma_{\! j}^{\perp}$ with the counting measure. This implies that the measure on $\Gamma_{\! j}$ becomes $N^j$ times the counting measure. One can now show that this GTI system does
  not satisfy the $\alpha$-LIC. However, the system does indeed satisfy the $t_{\alpha}$-equations. For $\alpha=0$:
  \[
  \sum_{j=1}^{\infty} \abs{\hat{g}_j(\omega)}^2=\sum_{j=1}^\infty 2^{-j} \abs{e^{2\pi i \tau_j
      \omega}}^2 = \frac{1}{2-1} = 1,
  \]
  and for $\alpha=k/2^{j^*} \in 2^{-j^*} \Z = \LG_{\! j^*}^\perp$, where $k$ is odd,
\begin{multline*}
  \sum_{j\in J \, : \, \alpha \in \Gamma_{\!j}^{\perp} } \hat{g}_j (\omega)
  \overline{\hat{g}_j (\omega + \alpha)} = \sum_{j=j^*}^\infty 2^{-j} e^{-2\pi i \frac{k}
    {2^{j^*}}(2^{j-1}-1)} = e^{2\pi i k2^{-j^*}} \sum_{j=j^*}^\infty 2^{-j} e^{-2\pi i \frac{2^{j-1}}{2^{j^*}}}
  \\ = e^{2 \pi i k2^{-j^*}} \bigg( -2^{-j^*} + \sum_{j=j^*+1}^\infty 2^{-j} \bigg) = 0.
\end{multline*}

If one uses $N\ge 3,N\in\N$ in place of $N=2$, then the $\alpha$-LIC is still not
satisfied. However, even though for suitably chosen $\tau_j$ (the formula is more complicated than
for $N=2$, see \cite{MR2066821}) $\cup_{j\in \N} \{T_{\gamma} N^{-j/2}
\mathds{1}_{\tau_j}\}_{\gamma\in N^j \Z}$ is still essentially a reordering of the standard
orthonormal basis, every $t_{\alpha}$-equation is false. The case $\alpha=0$ gives
$t_\alpha=\frac{1}{N-1}\neq 1$, while the cases $\alpha\neq 0$ give $t_\alpha \neq 0$. We stress
that these examples show the existence of generalized translation invariant Parseval frames for
$\ell^2(\Z)$ which do not satisfy the $t_{\alpha}$-equations.
\end{example}

\subsection{Characterization results for special groups}
\label{sec:special-syst}

Under special circumstances the local integrability condition will be satisfied automatically. In
this section we will see that this is indeed the case for TI systems, \ie $\Gamma_{\! j}=\Gamma$ for
all $j\in J$, and for GTI systems on compact abelian groups $G$. For brevity, we will only state the
corresponding characterization results for dual frames, but remark here that the results hold
equally for Parseval frames, in which case, the Bessel family assumption can be omitted.

Let us begin with a lemma concerning general GTI systems for LCA groups showing that the LIC holds
if the annihilators of $\Gamma_{\! j}$ possess a sufficient amount of separation.
\begin{lemma} \label{le:GSI-to-LIC} 
If \gti has a uniformly bounded Calder\'on integral and if there
  exists a constant $C>0$ such that for all compact $K\subseteq\ghat$
\begin{equation*}
\sum_{\alpha\in \bigcup\limits_{j\in J} \Gamma_{\! j}^{\perp}} \mu_{\ghat}(K\cap\alpha^{-1}K) \le C, 
\end{equation*} 
then \gti satisfies the local integrability condition. 
\end{lemma}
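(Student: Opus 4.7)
The plan is to verify the local integrability condition in its equivalent form \eqref{eq:LIC-with-K}, namely, for each compact $K\subseteq \ghat\setminus E$, to bound
\[
S(K) := \sum_{j\in J}\int_{P_j}\sum_{\alpha\in\Gamma_{\!j}^{\perp}}\int_{K\cap\alpha^{-1}K}\absbig{\hat{g}_p(\omega)}^2\,d\mu_{\ghat}(\omega)\,d\mu_{P_j}(p).
\]
The strategy is to reindex so that the summation over $\alpha$ is taken over the common set $A:=\bigcup_{j\in J}\Gamma_{\!j}^{\perp}$, then apply Tonelli to pull the $\omega$-integration outside, and finally use the Calder\'on bound pointwise before invoking the separation hypothesis on the annihilators.

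First, I would set $J_\alpha := \setpropsmall{j\in J}{\alpha\in\Gamma_{\!j}^{\perp}}$ and observe that summing over pairs $(j,\alpha)$ with $j\in J$, $\alpha\in\Gamma_{\!j}^{\perp}$ is the same as summing over pairs $(\alpha,j)$ with $\alpha\in A$, $j\in J_\alpha$. Since the integrand is non-negative, Tonelli's theorem applies and yields
\[
S(K) = \sum_{\alpha\in A}\int_{K\cap\alpha^{-1}K}\bigg(\sum_{j\in J_\alpha}\int_{P_j}\absbig{\hat{g}_p(\omega)}^2\,d\mu_{P_j}(p)\bigg)\,d\mu_{\ghat}(\omega).
\]
Dropping the restriction $j\in J_\alpha$ in favor of $j\in J$ only enlarges the non-negative inner quantity, so the bracketed expression is pointwise bounded by the Calder\'on integral $\sum_{j\in J}\int_{P_j}\abssmall{\hat{g}_p(\omega)}^2 d\mu_{P_j}(p)$, which by hypothesis is essentially bounded by some constant $B'>0$.

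Combining these observations gives
\[
S(K) \le B' \sum_{\alpha\in A}\mu_{\ghat}(K\cap\alpha^{-1}K) \le B' C < \infty,
\]
where the last inequality is exactly the separation assumption on $\bigcup_{j\in J}\Gamma_{\!j}^{\perp}$. By Lemma~\ref{le:ASC-compact-sets-and-functions} (applied with $h_p=g_p$ and the LIC instead of the $\alpha$-LIC), this verifies the LIC.

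I do not anticipate a genuine obstacle here: the entire argument rests on non-negativity (so Tonelli is free), the pointwise Calder\'on bound, and the given compact-set covering estimate. The only mild subtlety is to notice that one must reindex over the \emph{union} $A=\bigcup_{j\in J}\Gamma_{\!j}^{\perp}$, rather than over each $\Gamma_{\!j}^{\perp}$ separately, in order to bring the separation hypothesis to bear.
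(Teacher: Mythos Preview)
Your proof is correct and follows essentially the same route as the paper: reindex the double sum so that $\alpha$ ranges over the union $\bigcup_{j\in J}\Gamma_{\!j}^{\perp}$, apply Tonelli, bound the inner quantity by the Calder\'on integral, and then invoke the separation hypothesis. If anything, your version is slightly more careful in explicitly introducing $J_\alpha$ before enlarging to all of $J$.
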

\begin{proof}
  By assumption there exists a constant $B>0$ such that $\sum_{j\in J}\int_{P_j}
  \abs{\hat{g}_p(\omega)}^2 \, d\mu_{P_j}(p) < B$ for $\almoste\ \omega\in\ghat$, and we therefore have
  \begin{align*}
& \sum_{j\in J} \int_{P_j} \sum_{\alpha \in \Gamma_{\! j}^{\perp}} \int_{K \cap \alpha^{-1} K} |\hat g_p(\omega)|^2 \, d\mu_{\ghat}(\omega) \, d\mu_{P_j}(p) \\
& = \sum_{\alpha\in \bigcup\limits_{j\in J} \Gamma_{\! j}^{\perp}} \int_{K\cap \alpha^{-1} K} \sum_{j\in J} \int_{P_j} |\hat g_p(\omega)|^2 \, d\mu_{\ghat}(\omega) \, d\mu_{P_j}(p) \le B C < \infty.
\end{align*}
\end{proof}

Now, let us consider the case where all subgroups $\Gamma_{\! j}$ coincide. In other words, we
consider \emph{translation invariant} systems. Note that this setting includes the continuous
wavelet and Gabor transform as well as the shift invariant systems considered in
\cite{MR1916862,MR2283810}.
\begin{theorem} \label{th:TI-systems} 
Let $\Gamma$ be a co-compact subgroup in $G$. Suppose that $\cup_{j\in J}
  \{T_{\gamma}g_p\}_{\gamma\in \Gamma, p\in P_j}$ and $\cup_{j\in J} \{T_{\gamma}h_p\}_{\gamma\in \Gamma, p\in
    P_j}$ are Bessel families. Then the following statements are equivalent:
\begin{enumerate}[(i)]
 \item $\cup_{j\in J} \{T_{\gamma}g_p\}_{\gamma\in \Gamma, p\in P_j}$ and $\cup_{j\in J} \{T_{\gamma}h_p\}
 _{\gamma\in \Gamma, p\in P_j}$ are dual frames for $L^2(G)$, 
 \item For each $\alpha \in \Gamma^{\perp}$ we have
   \begin{equation}
 t_{\alpha}(\omega) := \sum_{j\in J} \int_{P_j}
 \overline{\hat{g}_p(\omega)} \hat{h}_p(\omega\alpha ) \, d\mu_{P_j}(p) =
\delta_{\alpha,1} \quad \text{a.e.} \ \omega \in \ghat.\label{eq:t_alpha-SI}
\end{equation}
\end{enumerate}
\end{theorem}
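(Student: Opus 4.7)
The plan is to deduce this theorem directly from Theorem~\ref{th:2001e} by verifying that in the translation invariant setting the dual $\alpha$-LIC is automatic from the Bessel hypotheses. Observe first that since $\Gamma_{\!j}=\Gamma$ for every $j\in J$, the union $\bigcup_{j\in J}\Gamma_{\!j}^{\perp}$ collapses to the single discrete subgroup $\Gamma^{\perp}$ of $\ghat$ (discrete because $\Gamma$ is co-compact). In particular, for any $\alpha\in \ghat$ the condition $\alpha\in \Gamma_{\!j}^{\perp}$ either holds for every $j\in J$ or for no $j\in J$, so the $t_\alpha$-equations of Theorem~\ref{th:2001e} restricted to $\alpha\in\Gamma^{\perp}$ agree verbatim with~\eqref{eq:t_alpha-SI}.

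Next, I would verify the local integrability condition for the two systems. By Proposition~\ref{pr:bessel-implies-calderon-bound}, the Bessel hypothesis on $\cup_{j\in J}\{T_\gamma g_p\}_{\gamma\in\Gamma,p\in P_j}$ gives a uniform bound $B$ on its Calder\'on integral $\omega\mapsto\sum_{j\in J}\int_{P_j}|\hat g_p(\omega)|^2 d\mu_{P_j}(p)$, and similarly for the system built from $h_p$. To apply Lemma~\ref{le:GSI-to-LIC} it remains to check that for every compact $K\subset\ghat$,
\[
 \sum_{\alpha\in\Gamma^{\perp}}\mu_{\ghat}(K\cap \alpha^{-1}K)<\infty.
\]
This is immediate: the summand is nonzero only when $\alpha\in K K^{-1}$, a compact set, and the intersection of a compact set with the discrete group $\Gamma^{\perp}$ is finite; hence the sum has finitely many terms, each bounded by $\mu_{\ghat}(K)$. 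Consequently both GTI systems satisfy the LIC~\eqref{eq:LIC}.

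Applying Lemma~\ref{le:LIC} to the pair of systems then yields the dual $\alpha$-LIC, so Theorem~\ref{th:2001e} applies and delivers the stated equivalence (i)$\Leftrightarrow$(ii). There is no serious obstacle in this argument; the only point that requires a moment's thought is that the uniform bound on the Calder\'on integrals, combined with discreteness of the single annihilator $\Gamma^{\perp}$, suffices to sidestep any delicate local integrability issue that could arise when the $\Gamma_{\!j}^{\perp}$ vary with $j$.
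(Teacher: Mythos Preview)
Your proposal is correct and follows essentially the same route as the paper: use Proposition~\ref{pr:bessel-implies-calderon-bound} to bound the Calder\'on integral, then observe that for compact $K$ only finitely many $\alpha\in\Gamma^\perp$ can lie in the compact set $KK^{-1}$, so Lemma~\ref{le:GSI-to-LIC} gives the LIC, Lemma~\ref{le:LIC} upgrades this to the dual $\alpha$-LIC, and Theorem~\ref{th:2001e} finishes. The paper's justification of the finiteness step is phrased via the translation-invariant metric and a sequential compactness argument rather than your ``closed discrete $\cap$ compact is finite'' observation, but the content is identical.
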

\begin{proof}
  Since $\Gamma^{\perp}$ is a discrete subgroup in $\ghat$ and since the metric on $\ghat$ is
  translation invariant, there exists a $\delta>0$ so that the distance between two distinct points
  from $\Gamma^{\perp}$ is larger than $\delta$. Thus, for any compact $K \subset \ghat$, the set
  $\Gamma^\perp \cap (K^{-1}K)$ has finite cardinality because, if not, then $\Gamma^\perp \cap
  (K^{-1}K)$ would contain a sequence (take one without repetitions) with no convergent subsequence
  which contradicts the compactness of $K$. Since $\{\alpha\in\Gamma^{\perp} \, : \, K\alpha\cap
  K\ne \emptyset\}$ is a subset of $\Gamma^\perp \cap (K^{-1}K)$, it is also of finite cardinality.
  From this together with the Bessel assumption and Proposition
  \ref{pr:bessel-implies-calderon-bound} we conclude that the assumptions of Lemma
  \ref{le:GSI-to-LIC} are satisfied and hence the LIC holds. By Lemma \ref{le:LIC} the dual
  $\alpha$-LIC is satisfied and the result now readily follows from Theorem \ref{th:2001e}.
\end{proof}

For TI systems with translation along the entire group $\Gamma=G$ there is only one
$t_\alpha$-equation in (\ref{eq:t_alpha-SI}) since $G^\perp=\set{1}$. To be precise:
\begin{lemma} \label{th:TI-systems-along-G} Suppose that $\Gamma=G$. Then assertion (ii) in Theorem~\ref{th:TI-systems} reduces to  
\[ \sum_{j\in J} \int_{P_j}
 \overline{\hat{g}_p(\omega)} \hat{h}_p(\omega) \, d\mu_{P_j}(p) =
1 \quad \text{a.e.} \ \omega \in \ghat. 
\]
\end{lemma}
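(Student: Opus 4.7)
The plan is to observe that when $\Gamma = G$, the annihilator collapses to the trivial subgroup of $\ghat$, so there is exactly one character $\alpha$ to consider in the family of $t_\alpha$-equations, namely the trivial character $\alpha = 1$. The bulk of the work is just unpacking the definition of the annihilator.

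First I would recall that by definition $\Gamma^\perp = \{\omega \in \ghat : \omega(x) = 1 \text{ for all } x \in \Gamma\}$, so setting $\Gamma = G$ gives $G^\perp = \{\omega \in \ghat : \omega(x) = 1 \text{ for all } x \in G\}$. Since the characters of $G$ separate points (and in particular the only character that is identically $1$ on $G$ is the identity element of $\ghat$), we conclude $G^\perp = \{1\}$. Alternatively, one may invoke the standard duality $\widehat{G/\Gamma} \cong \Gamma^\perp$ with $\Gamma = G$: then $G/\Gamma$ is the trivial group, whose dual is trivial, giving the same conclusion.

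Consequently, the union $\bigcup_{j \in J}\LG_{\!j}^\perp$ appearing in the hypothesis of Theorem~\ref{th:TI-systems}(ii) is just $\{1\}$, so the family of $t_\alpha$-equations~(\ref{eq:t_alpha-SI}) reduces to the single equation corresponding to $\alpha = 1$. Since $\delta_{1,1} = 1$ and the character $\alpha = 1$ acts trivially so that $\omega \alpha = \omega$, equation~(\ref{eq:t_alpha-SI}) becomes exactly
\[
\sum_{j\in J} \int_{P_j} \overline{\hat{g}_p(\omega)}\hat{h}_p(\omega)\, d\mu_{P_j}(p) = 1 \quad \text{a.e. } \omega \in \ghat,
\]
as claimed. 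There is no serious obstacle here; the only subtlety is to justify $G^\perp = \{1\}$, which either follows from the point-separation property of the characters on an LCA group or from Pontryagin duality applied to the trivial quotient $G/G$.
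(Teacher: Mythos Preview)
Your proof is correct and matches the paper's approach: the paper simply notes that $G^\perp=\{1\}$ so there is only one $t_\alpha$-equation, and states the lemma without further argument. Your write-up is, if anything, more detailed than what the paper provides.
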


Let us now turn to the familiar setting of \cite{MR1916862,MR2283810}, where $\Gamma$ is a uniform
lattice, i.e., a discrete, co-compact subgroup. Then there is a compact fundamental domain $F\subset
G$ for $\Gamma$, such that $G=F\Gamma$, and moreover for any $x\in G$ we have $x = \varphi \gamma$,
where $\varphi\in F,\gamma\in \Gamma$ are unique.  For a uniform lattice we introduce the
\emph{lattice size} $s(\Gamma):=\mu_G(F)$, which is, in fact, independent of the choice of $F$.

\begin{corollary} \label{th:SI-uniform-lattice-dual} Let $\Gamma$ be a uniform lattice in $G$. Suppose that the two generalized translation invariant systems
  $\cup_{j\in J} \{T_{\gamma}g_p\}_{\gamma\in \Gamma, p\in P_j}$ and $\cup_{j\in J} \{T_{\gamma}h_p\}_{\gamma\in
    \Gamma, p\in P_j}$ are Bessel families. Then the following statements are equivalent:
\begin{enumerate}[(i)]
 \item $\cup_{j\in J} \{T_{\gamma}g_p\}_{\gamma\in \Gamma, p\in P_j}$ and $\cup_{j\in J}\{T_{\gamma}h_p\}
 _{\gamma\in \Gamma, p\in P_j}$ are dual frames for $L^2(G)$, \ie 
 \begin{equation}
  \langle f_1,f_2\rangle = \sum_{j\in J} \int_{P_j} s(\Gamma) \sum_{\gamma\in\Gamma} \langle f_1,T_{\gamma}g_p \rangle \langle T_{\gamma}h_p,f_2\rangle  \, d\mu_{P_j}(p), \quad \text{for all } f_1,f_2\in L^2(G).\label{eq:reprod-form-SI-dual-frames}
\end{equation}
 \item For each $\alpha \in \Gamma^{\perp}$ we have $t_\alpha(\omega)=\delta_{\alpha,1}$ for a.e. $\omega \in \ghat$, where $t_\alpha$ is defined in \eqref{eq:t_alpha-SI}
\end{enumerate}
\end{corollary}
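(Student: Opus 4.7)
The plan is to deduce this corollary directly from Theorem~\ref{th:TI-systems}; essentially all that is needed is to identify the Haar measure on the uniform lattice $\Gamma$ under our standing normalization convention, and then rewrite the integral over $\Gamma$ as a weighted sum.

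First, I would observe that the $t_\alpha$-equations in condition (ii) coincide verbatim with those in Theorem~\ref{th:TI-systems}(ii), and the Bessel assumption is the same. Since a uniform lattice is, by definition, a closed co-compact subgroup, Theorem~\ref{th:TI-systems} applies and gives an equivalence between (ii) and the statement that the two systems form dual frames with respect to the product measure space $(P_j \times \Gamma, \mu_{P_j} \otimes \mu_\Gamma)$ in the sense of Section~\ref{sec:frame-theory}. It therefore suffices to check that, with our conventions, $\mu_\Gamma$ equals $s(\Gamma)$ times the counting measure on $\Gamma$, for then the reproducing formula
\[
\innerprod{f_1}{f_2} = \sum_{j\in J}\int_{P_j} \int_{\Gamma} \innerprod{f_1}{T_\gamma g_p}\innerprod{T_\gamma h_p}{f_2}\, d\mu_\Gamma(\gamma)\, d\mu_{P_j}(p)
\]
coincides with~(\ref{eq:reprod-form-SI-dual-frames}).

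The identification of $\mu_\Gamma$ is the one genuine computation. By assumption, $\mu_G$ is fixed and $\Gamma^{\perp}$ carries the counting measure; by Remark~\ref{rem:1802f}, all remaining measures are uniquely determined. Since $\Gamma$ is a uniform lattice, it is discrete, and so the Haar measure on $\Gamma$ is of the form $c\, \mu_c$ for some constant $c>0$, where $\mu_c$ denotes the counting measure. Applying Weil's formula~(\ref{eq:1802a}) to $f = \mathds{1}_F$, where $F$ is a fundamental domain, gives
\[
s(\Gamma) = \mu_G(F) = \int_{G/\Gamma}\int_\Gamma \mathds{1}_F(x+\gamma)\, d\mu_\Gamma(\gamma)\, d\mu_{G/\Gamma}(\dot x) = c\, \mu_{G/\Gamma}(G/\Gamma).
\]
Since $\widehat{G/\Gamma}\cong \Gamma^{\perp}$ with counting measure, the Plancherel theorem (applied, for instance, to the constant function $1$ on the compact group $G/\Gamma$) forces $\mu_{G/\Gamma}(G/\Gamma)=1$. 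Thus $c = s(\Gamma)$, as claimed.

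Combining these two steps, the equivalence of (i) and (ii) follows directly from Theorem~\ref{th:TI-systems}, with the reproducing formula taking the form~(\ref{eq:reprod-form-SI-dual-frames}). The one subtlety is the normalization computation for $\mu_\Gamma$; this is not deep but has to be carried out consistently with the convention made at the start of Section~\ref{sec:gener-transl-invar}, so I would be careful to invoke Remark~\ref{rem:1802f} explicitly to justify that the normalization is well-defined.
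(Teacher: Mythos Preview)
Your proposal is correct and follows exactly the approach implicit in the paper: the corollary is stated without a separate proof because it is an immediate specialization of Theorem~\ref{th:TI-systems} to the case where $\Gamma$ is discrete, together with the measure normalization $\mu_\Gamma = s(\Gamma)\,\mu_c$ dictated by the conventions of Section~\ref{sec:gener-transl-invar}. The explicit verification of $\mu_\Gamma = s(\Gamma)\,\mu_c$ via Weil's formula and the Plancherel normalization on $G/\Gamma$ is precisely what the paper leaves to the reader, and your computation is sound.
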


\begin{remark} 
  In the same way, we can state the characterization results for \emph{generalized} shift-invariant
  systems. In this case we have countable many uniform lattices $\Gamma_{\! j}$, so we replace
  $s(\Gamma)$ in Corollary \ref{th:SI-uniform-lattice-dual} with $s(\Gamma_{\! j})$, sum over
  $\setpropsmall{j \in J}{\alpha \in \Gamma_{\! j}^\perp}$ in (\ref{eq:reprod-form-SI-dual-frames}),
  and add the dual $\alpha$ local integrability condition as assumption. We obtain a statement
  equivalent to the main characterization result in \cite{MR2283810}. In contrast to the result in
  \cite{MR2283810}, the lattice size $s(\Gamma)$ is contained in the reproducing formula rather than
  in the $t_{\alpha}$-equations.
\end{remark}

For compact abelian groups all generalized translation invariant systems satisfy the local
integrability condition. The characterization result is as follows.

\begin{theorem} \label{th:compact-G-dual} Let $G$ be a compact abelian group. Suppose that \gti and
  \gti[h] are Bessel families. Then the following statements are equivalent:
\begin{enumerate}[(i)]
 \item \gti and \gti[h] are dual frames for $L^2(G)$,
 \item for each $\alpha \in \bigcup\limits_{j\in J}\Gamma_{\! j}^{\perp}$ we have
\[ t_{\alpha}(\omega) := \sum_{j\in J \, : \, \alpha\in \LG_{\! j}^{\perp}} \int_{P_j}
 \overline{\hat{g}_p(\omega)} \hat{h}_p(\omega\alpha ) \, d\mu_{P_j}(p) =
\delta_{\alpha,1} \quad \text{a.e.} \ \omega \in \ghat. 
\]
\end{enumerate}
\end{theorem}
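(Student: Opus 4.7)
The plan is to reduce Theorem \ref{th:compact-G-dual} to Theorem \ref{th:2001e}: since both \gti and \gti[h] are assumed to be Bessel families, the only additional hypothesis needed is the dual $\alpha$-LIC. By Lemma \ref{le:LIC}, it suffices to verify the ordinary local integrability condition separately for each of the two systems, after which Theorem \ref{th:2001e} supplies the equivalence stated in (i) and (ii).

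To establish the LIC, I would apply Lemma \ref{le:GSI-to-LIC}. This reduces the task to two sub-claims: (a) the Calder\'on integral $\omega\mapsto \sum_{j\in J}\int_{P_j}\abs{\hat g_p(\omega)}^2\, d\mu_{P_j}(p)$ is essentially bounded, and (b) for every compact $K\subseteq \ghat$, the quantity $\sum_{\alpha\in \cup_j \Gamma_{\!j}^{\perp}} \mu_{\ghat}(K\cap\alpha^{-1}K)$ is finite. Claim (a) is immediate: the Bessel hypothesis together with Proposition~\ref{pr:bessel-implies-calderon-bound} gives the bound.

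Claim (b) is where we use compactness of $G$. Since $G$ is compact, its dual $\ghat$ is discrete, and therefore every compact subset $K\subseteq \ghat$ is finite. The set $K^{-1}K$ is then also finite, and the condition $K\cap\alpha^{-1}K\neq\emptyset$ forces $\alpha\in K^{-1}K$; in particular only finitely many $\alpha\in \cup_j\Gamma_{\!j}^{\perp}$ contribute to the sum. For each such $\alpha$, $\mu_{\ghat}(K\cap\alpha^{-1}K)\le \mu_{\ghat}(K)<\infty$, since the Haar measure on a discrete group is a scalar multiple of the counting measure. Thus $\sum_{\alpha} \mu_{\ghat}(K\cap\alpha^{-1}K)\le \abs{K^{-1}K}\,\mu_{\ghat}(K)<\infty$, proving (b).

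With (a) and (b) verified, Lemma~\ref{le:GSI-to-LIC} yields the LIC for both \gti and \gti[h], Lemma~\ref{le:LIC} then gives the dual $\alpha$-LIC, and Theorem~\ref{th:2001e} completes the argument. There is no real obstacle here; the whole content of the theorem is that discreteness of $\ghat$ makes the local summability conditions automatic, so the main conceptual point to articulate clearly is simply the reduction: compact $G$ $\Rightarrow$ discrete $\ghat$ $\Rightarrow$ compacts are finite $\Rightarrow$ the geometric condition of Lemma~\ref{le:GSI-to-LIC} is trivially met.
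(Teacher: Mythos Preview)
Your strategy matches the paper's: both verify the LIC directly from compactness of $G$ (hence discreteness of $\ghat$ and finiteness of compacts $K$) together with the Calder\'on bound from Proposition~\ref{pr:bessel-implies-calderon-bound}, then pass to the dual $\alpha$-LIC via Lemma~\ref{le:LIC} and conclude with Theorem~\ref{th:2001e}. The paper does the LIC estimate by a direct computation, bounding the LIC sum by $(\#K)^2\max_{\omega\in K}\sum_{j}\int_{P_j}\abs{\hat g_p(\omega)}^2\,d\mu_{P_j}(p)$, rather than routing through Lemma~\ref{le:GSI-to-LIC}.

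One small technical caveat: Lemma~\ref{le:GSI-to-LIC} as stated asks for a \emph{single} constant $C$ valid for \emph{all} compact $K$, whereas your bound $\abs{K^{-1}K}\,\mu_{\ghat}(K)$ depends on $K$. This is not a real obstacle---the proof of Lemma~\ref{le:GSI-to-LIC} only uses the bound for the fixed $K$ at hand, so a $K$-dependent constant suffices for the LIC---but you should either note this or, as the paper does, simply write out the one-line estimate directly instead of invoking the lemma.
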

\begin{proof}
  Because $G$ is compact, the dual group $\ghat$ is discrete. All compact $K\subset \ghat$ are
  therefore finite. Let $\# K$ denote the number of elements in $K$. From the LIC we then find
\begin{align*}
\sum_{j\in J} \int_{P_j} \sum_{\alpha\in\Gamma_{\! j}^{\perp}} \sum_{\omega\in K\cap \alpha^{-1} K} \abs{\hat{g}_p(\omega)}^2 \, d\mu_{P_j}(p) & \le \sum_{j\in J} \int_{P_j} \# K \sum_{\omega\in K} \abs{\hat{g}_p(\omega)}^2 \, d\mu_{P_j}(p) \\ & \le (\# K)^2 \max_{\omega\in K} \, \sum_{j\in J} \int_{P_j} \abs{\hat{g}_p(\omega)}^2 \, d\mu_{P_j}.
\end{align*}
By the Bessel assumption and Proposition \ref{pr:bessel-implies-calderon-bound}, the Calder\'on
integral is bounded. The far right hand side in the above calculation is therefore finite, and the
LIC is satisfied. The result now follows from Theorem \ref{th:2001e} and Lemma \ref{le:LIC}.
\end{proof}

Finally, let us turn to discrete groups $G$. In this case, the local integrability condition is
\emph{not} automatically satisfied (as we saw in the examples in the previous section), but it has a
simple reformulation:
\begin{lemma} \label{le:discreteLIC} Suppose $G$ is a discrete abelian group. Then the following statements are equivalent:
\begin{enumerate}[(i)]
\item The system \gti satisfies the local integrability condition,
\item $\displaystyle \sum_{j\in J} \int_{P_j} \mu_c(\Gamma_{\! j}^{\perp}) \, \Vert g_p \Vert_{L^2(G)}^2 \, d\mu_{P_j}(p) < \infty$, where $\mu_c$ denotes the counting measure.
\end{enumerate}  
\end{lemma}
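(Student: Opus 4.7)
The plan is to apply Lemma~\ref{le:ASC-compact-sets-and-functions} (specialized to $h_p=g_p$) to recast the LIC as the assertion that
\[
I(K):=\sum_{j\in J}\int_{P_j}\sum_{\alpha\in\Gamma_{\!j}^{\perp}}\int_{K\cap\alpha^{-1}K}|\hat g_p(\omega)|^2\,d\mu_\ghat(\omega)\,d\mu_{P_j}(p)<\infty
\]
holds for every compact $K\subset\ghat\setminus E$. The key structural feature is that $G$ discrete forces $\ghat$ to be compact, hence $\mu_\ghat(\ghat)<\infty$, and Plancherel's theorem yields $\int_\ghat|\hat g_p|^2\,d\mu_\ghat=\|g_p\|_{L^2(G)}^2$. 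Everything else reduces to elementary measure theory.

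For (ii)$\Rightarrow$(i), I would trivially bound $\int_{K\cap\alpha^{-1}K}|\hat g_p|^2\,d\mu_\ghat\le \|g_p\|_{L^2(G)}^2$ for any compact $K\subset\ghat$ and any $\alpha\in\Gamma_{\!j}^{\perp}$ via Plancherel, then sum over $\alpha\in\Gamma_{\!j}^{\perp}$ to pick up the factor $\mu_c(\Gamma_{\!j}^{\perp})$, whereupon hypothesis (ii) bounds the remaining iterated sum and integral.

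For (i)$\Rightarrow$(ii), I would exhaust $\ghat\setminus\overline E$ by an increasing sequence of compact sets $K_n$---available by inner regularity of $\mu_\ghat$, since $\mu_\ghat(\overline E)=0$ and $\ghat$ is compact---with $\mu_\ghat(K_n)\nearrow\mu_\ghat(\ghat)$. For each $\alpha$ the elementary inclusion--exclusion bound gives $\mu_\ghat(K_n\cap\alpha^{-1}K_n)\ge 2\mu_\ghat(K_n)-\mu_\ghat(\ghat)\to\mu_\ghat(\ghat)$, so three cascading applications of the monotone convergence theorem---first on the inner integral, then on the sum over $\alpha\in\Gamma_{\!j}^\perp$, and then jointly on the iterated integral over $P_j$ and the sum over $j$---transform $I(K_n)$ in the limit into the quantity $\sum_{j\in J}\int_{P_j}\mu_c(\Gamma_{\!j}^{\perp})\|g_p\|_{L^2(G)}^2\,d\mu_{P_j}(p)$ appearing in (ii).

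The main obstacle is to ensure this limit is actually finite, since the LIC supplies the finiteness of $I(K_n)$ for each individual $n$ but not uniformly in $n$. In the standard case $E=\emptyset$ one may take $K=\ghat$ directly and the implication is immediate from $I(\ghat)<\infty$; in the general case one plugs $\hat f_n=\mathds{1}_{K_n}\in\cD$ into the LIC~(\ref{eq:LIC}) at each step, which reproduces $I(K_n)$ and, combined with the monotone convergence argument above, completes the passage to~(ii).
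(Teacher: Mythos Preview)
Your approach coincides with the paper's on the core argument. For (ii)$\Rightarrow$(i) the paper uses exactly your trivial bound $\int_{K\cap\alpha^{-1}K}|\hat g_p|^2\,d\mu_{\ghat}\le\|g_p\|_{L^2(G)}^2$ and sums over $\alpha$. For (i)$\Rightarrow$(ii) the paper simply takes $K=\ghat$ (compact because $G$ is discrete) in~\eqref{eq:LIC-with-K} and reads off the sum in~(ii) directly, which is precisely your ``standard case $E=\emptyset$.''

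Your additional treatment of general $E$ goes beyond what the paper does---the paper writes $K=\ghat$ without comment---but the exhaustion argument has a gap. Monotone convergence indeed gives $I(K_n)\nearrow S$ with $S$ the quantity in~(ii), yet knowing $I(K_n)<\infty$ for each $n$ separately does not force $S<\infty$; that would require a bound uniform in $n$, which the LIC applied to each $\hat f_n=\mathds 1_{K_n}$ does not supply. Your final sentence only restates that each $I(K_n)$ is finite and does not furnish the missing uniform control. So for $E\neq\emptyset$ the implication (i)$\Rightarrow$(ii) remains open in your write-up, just as it is (tacitly) in the paper's proof.
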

\begin{proof} Note that if $G$ is discrete, then $\ghat$ is compact. Hence the discrete groups $\Gamma_{\! j}^{\perp}$ are also compact and therefore finite. By this observation we can easily show the result. If (i) holds, then 
\[ \sum_{j\in J} \int_{P_j} \mu_c(\Gamma_{\! j}^{\perp}) \, \Vert g_p \Vert_{L^2(G)}^2 \, d\mu_{P_j}(p) \le \sum_{j\in J}\int_{P_j} \int_{\ghat} \mu_c( \Gamma_{\! j}^{\perp}) \, \vert\hat{g}_p(\omega) \vert^2 \, d\mu_{\ghat}(\omega) \, d\mu_{P_j}(p).\]
By \eqref{eq:LIC-with-K} with $K=\ghat$ the right hand side is finite, and (ii) follows. If (ii) holds, then
\[ \sum_{j\in J} \int_{P_j} \sum_{\alpha\in \LG_{\! j}^{\perp}}
\int_{K\cap\alpha^{-1}K} \vert \hat{g}_p(\omega) \vert^2  d\mu_{\ghat}
(\omega)  d\mu_{P_j}(p) \le \sum_{j\in J} \int_{P_j} \mu_c( \LG_{\! j}^{\perp})  
\int_{\ghat} \vert \hat{g}_p(\omega) \vert^2  d\mu_{\ghat}
(\omega)  d\mu_{P_j}(p) < \infty. \]
\end{proof}

\section{Applications and discussions of the characterization results}
\label{sec:applications}

In this section we study applications of Theorem~\ref{th:2001e} leading to new characterization
results. Moreover, we will easily recover known results as special cases of our theory. We consider
Gabor and wavelet-like systems for general locally compact abelian groups as well as for specific
locally compact abelian groups, \eg $\R^n, \Z^n, \Z_n$. We also give an example of characterization
results for the locally compact abelian group of $p$-adic numbers, where the theory of generalized
\emph{shift} invariant systems is not applicable. 

We will focus on verifying the local integrability conditions and on the deriving the characterizing
equations, but not on the related question of how to construct generators satisfying these
equations.  The recent work of Christensen and Goh \cite{OleSaySon2014} takes this more constructive
approach for generalized \emph{shift} invariant systems on locally compact abelian groups. Under
certain assumptions, they explicitly construct dual GSI frames using variants of
$t_\alpha$-equations, which are proved to be sufficient.

\subsection{Gabor systems}
\label{sec:gabor-systems}
A \emph{Gabor system} in $L^2(G)$ with generator $g\in L^2(G)$ is a family of functions of the form
\[\set{E_{\gamma} T_{\lambda} g}_{\gamma \in \LG, \lambda \in \LL}, \text{where $\LG\subseteq \ghat$ and
$\LL\subseteq
G$}.\] 
Note that a Gabor system $\set{E_{\gamma} T_{\lambda} g}_{\gamma \in \LG, \lambda \in \LL}$ is not a
generalized translation invariant system because $E_{\gamma} T_{\lambda} g = T_{\lambda} \big(
\gamma(\lambda) E_{\gamma} g \big)$ cannot be written as $T_{\gamma}{g_{j,p}}$ for $j \in J$ and $p \in P_j$
for any $\{g_{j,p}\}$.  However, by use of Lemma~\ref{thm:unitary-equi-duals}, we
can establish the following two possibilities to relate Gabor and translation invariant systems.

Firstly, by Lemma~\ref{thm:unitary-equi-duals} with $\iota=\mathrm{id}$, $U=\mathcal{F}$ and
$c_{\gamma,\lambda}=1$, we see that the Gabor system $\set{E_{\gamma} T_{\lambda} g}_{\gamma \in
  \LG, \lambda \in \LL}$ is a frame if, and only if, the translation invariant system $\{ T_{\gamma}
\mathcal F^{-1} T_{\lambda} g \}_{\gamma \in \LG, \lambda \in \LL}$ is a frame. By this observation
all results for translation invariant systems naturally carry over to Gabor systems. In order to apply the theory
established in this paper, we need $\Gamma$ to be a closed, co-compact subgroup of $\ghat$ and
$\Lambda$ to be equipped with a measure $\mu_{\Lambda}$ satisfying the standing
hypotheses~\eqref{eq:Hypo1}--\eqref{eq:Hypo3}. This approach together with Theorem \ref{th:2001e}
yield $t_{\alpha}$-equations in the \emph{time} domain $G$: for each $\alpha \in \LG^{\perp}$ we have
\[
  \int_{\Lambda}
 \overline{g(x-\lambda)} h(x-\lambda+\alpha ) \, d\mu_{\Lambda}(\lambda) =
\delta_{\alpha,0} \quad \almoste\ x \in G. 
\]

Secondly, by Lemma~\ref{thm:unitary-equi-duals} with $\iota=\mathrm{id}$, $U=\mathrm{id}$ and
$c_{\gamma,\lambda}=\gamma(\lambda)$, we see that the Gabor system $\set{E_{\gamma} T_{\lambda}
  g}_{\gamma \in \LG, \lambda \in \LL}$ is a frame if, and only if, the translation invariant system
$\set{T_{\lambda} E_{\gamma} g}_{\gamma\in\LG,\lambda\in \LL}$ is a frame. This time we need $\Lambda$ to be a closed,
co-compact subgroup of $G$ and $\Gamma$ to be equipped with a measure satisfying standing
hypotheses~\eqref{eq:Hypo1}--\eqref{eq:Hypo3}.  In contrast to the first approach, Theorem
\ref{th:2001e} now yields $t_{\alpha}$-equations in the \emph{frequency} domain $\ghat$: for each $\beta \in \Lambda^{\perp}$ we have
\[
  \int_{\Gamma}
 \overline{\hat{g}(\omega\gamma)} \hat{h}(\omega\gamma\beta ) \, d\mu_{\Gamma}(\gamma) =
\delta_{\beta,1} \quad \almoste\ \omega \in \ghat. 
\]

Gabor systems play a major role in time-frequency analysis \cite{gr01} and it is common to require
similar properties on $\Gamma$ and $\Lambda$. In the following theorem we characterize dual Gabor
frames, where we combine both of the above approaches and require that $\Lambda$ and $\Gamma$ are
closed, co-compact subgroups.  If we consider Parseval frames, then the Bessel assumption in Theorem
\ref{th:dual-Gabor} can be omitted.
\begin{theorem} 
\label{th:dual-Gabor} 
Let $\Lambda$ and $\Gamma$ be closed, co-compact subgroups of $G$ and $\ghat$ respectively and equip
$\Lambda^{\perp}$ and $\Gamma^{\perp}$ with the counting measure. Suppose that the two systems
$\{E_{\gamma}T_{\lambda}g\}_{\gamma\in\Gamma, \lambda\in\Lambda}$ and
$\{E_{\gamma}T_{\lambda}h\}_{\gamma\in\Gamma, \lambda\in\Lambda}$ are Bessel families. Then the
following statements are equivalent:
\begin{enumerate}[(i)]
 \item $\{E_{\gamma}T_{\lambda}g\}_{\gamma\in\Gamma, \lambda\in\Lambda}$ and $\{E_{\gamma}T_{\lambda}h\}_{\gamma\in\Gamma, \lambda\in\Lambda}$ are dual frames for $L^2(G)$,
 \item for each $\alpha \in \LG^{\perp}$ we have
\[
  \int_{\Lambda}
 \overline{g(x-\lambda)} h(x-\lambda+\alpha ) \, d\mu_{\Lambda}(\lambda) =
\delta_{\alpha,0} \quad \almoste\ x \in G, 
\]
 \item for each $\beta \in \Lambda^{\perp}$ we have
\[
  \int_{\Gamma}
 \overline{\hat{g}(\omega\gamma)} \hat{h}(\omega\gamma\beta ) \, d\mu_{\Gamma}(\gamma) =
\delta_{\beta,1} \quad \almoste\ \omega \in \ghat. 
\]
\end{enumerate}
\end{theorem}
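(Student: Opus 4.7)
My plan is to reduce each of the equivalences (i)$\Leftrightarrow$(iii) and (i)$\Leftrightarrow$(ii) to Theorem~\ref{th:TI-systems} by exhibiting two different unitary equivalences between the Gabor system and a translation invariant system, exactly as outlined in the paragraph preceding the statement. The key observation is that the commutator relations in Section~\ref{sec:LCA} let us rewrite the Gabor system in two useful ways.

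For (i)$\Leftrightarrow$(iii), I would use $E_{\gamma}T_{\lambda} = \gamma(\lambda)\,T_{\lambda}E_{\gamma}$. Setting $U=\mathrm{id}$, $\iota=\mathrm{id}$, and $c_{\gamma,\lambda}=\gamma(\lambda)$ in Lemma~\ref{thm:unitary-equi-duals} shows that $\{E_{\gamma}T_{\lambda}g\}$ and $\{E_{\gamma}T_{\lambda}h\}$ are dual frames in $L^{2}(G)$ if and only if the TI systems $\{T_{\lambda}E_{\gamma}g\}_{\gamma\in\Gamma,\lambda\in\Lambda}$ and $\{T_{\lambda}E_{\gamma}h\}_{\gamma\in\Gamma,\lambda\in\Lambda}$ are. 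Since $\Lambda$ is closed and co-compact in $G$, Theorem~\ref{th:TI-systems} applies (taking $J$ a singleton and $P_{j}=\Gamma$ equipped with $\mu_{\Gamma}$), yielding $t_{\alpha}$-equations for $\alpha\in\Lambda^{\perp}$. Using $\widehat{E_{\gamma}g}(\omega)=\hat{g}(\omega\gamma^{-1})$ and the change of variable $\gamma\mapsto\gamma^{-1}$ in the Haar integral over $\Gamma$, these equations rewrite precisely as (iii).

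For (i)$\Leftrightarrow$(ii), I would instead use $\mathcal{F}E_{\gamma}=T_{\gamma}\mathcal{F}$, which gives $\mathcal{F}(E_{\gamma}T_{\lambda}g)=T_{\gamma}(\mathcal{F}T_{\lambda}g)$. With $U=\mathcal{F}^{-1}\colon L^{2}(\ghat)\to L^{2}(G)$ and $c_{\gamma,\lambda}=1$, Lemma~\ref{thm:unitary-equi-duals} identifies the Gabor systems with the TI systems $\{T_{\gamma}\mathcal{F}(T_{\lambda}g)\}$ and $\{T_{\gamma}\mathcal{F}(T_{\lambda}h)\}$ in $L^{2}(\ghat)$, now translated along the closed, co-compact subgroup $\Gamma\subset\ghat$. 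Applying Theorem~\ref{th:TI-systems} in $L^{2}(\ghat)$ produces $t_{\alpha}$-equations indexed by $\alpha\in\Gamma^{\perp}\subset\widehat{\ghat}\cong G$. Unwinding the double Fourier transform (Pontryagin duality identifies $\widehat{\ghat}$ with $G$) and performing the change of variable $\lambda\mapsto\lambda-\alpha$ in the integral over $\Lambda$ converts these equations into the time-domain form (ii). Throughout, the Bessel hypothesis transfers by unitary equivalence (another consequence of Lemma~\ref{thm:unitary-equi-duals}), so the assumptions of Theorem~\ref{th:TI-systems} are met in both directions.

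The main obstacle, I expect, is bookkeeping rather than substance: carefully tracking identifications between $\ghat$ and $\widehat{\ghat}\cong G$, between multiplicative notation on $\ghat$ and additive notation on $G$, and between the various dual Haar measures compatible with Weil's formula (Remark~\ref{rem:1802f}), so that the counting measures chosen on $\Lambda^{\perp}$ and $\Gamma^{\perp}$ produce equations (ii) and (iii) with no extraneous constants. Once these identifications are handled, the equivalences follow directly from Lemma~\ref{thm:unitary-equi-duals} and Theorem~\ref{th:TI-systems}.
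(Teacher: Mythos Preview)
Your approach is exactly the one the paper takes: the proof in the paper simply cites Remark~\ref{rem:stand-hypo-special-cases} (to verify the standing hypotheses on the parameter spaces, which you should also mention) and then appeals to Theorem~\ref{th:TI-systems} together with Lemma~\ref{thm:unitary-equi-duals} and the two unitary equivalences you describe. One small correction in your bookkeeping for (ii): the change of variable $\lambda\mapsto\lambda-\alpha$ is not legitimate, since $\alpha\in\Gamma^{\perp}\subset G$ need not lie in $\Lambda$; instead, after unwinding Pontryagin duality you obtain $\int_{\Lambda}\overline{g(x-\lambda)}\,h(x-\lambda-\alpha)\,d\mu_{\Lambda}(\lambda)=\delta_{\alpha,0}$ directly, and re-indexing $\alpha\mapsto-\alpha$ over the group $\Gamma^{\perp}$ gives the stated form.
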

\begin{proof} By Remark~\ref{rem:stand-hypo-special-cases} the standing hypotheses are satisfied by the Gabor system. The result now follows from Theorem~\ref{th:TI-systems} together with Lemma \ref{thm:unitary-equi-duals} and the comments preceding Theorem~\ref{th:dual-Gabor}.
\end{proof}

From Theorem \ref{th:dual-Gabor} we can derive numerous results about Gabor systems.  We begin with
an example concerning the inversion of the short-time Fourier transform.
\begin{example} Let $g,h\in L^2(G)$ and consider $\{E_{\gamma}T_{\lambda}g\}_{\gamma\in\ghat,\lambda\in G}$ and $\{E_{\gamma}T_{\lambda}h\}_{\gamma\in\ghat,\lambda\in G}$. We equip $G$ and $\ghat$ with their respective Haar measures $\mu_{G}$ and $\mu_{\ghat}$. 
For $f\in L^2(G)$ we calculate
\begin{equation} \label{eq:0203d} \langle f, E_{\gamma}T_{\lambda} g \rangle = \int_{G} f(x) \overline{g(x-\lambda)}  \gamma(x) \, d\mu_{G}(x) = \mathcal F(f(\cdot)\overline{g(\cdot-\lambda)})(\gamma).
\end{equation}
With equation \eqref{eq:0203d} and since $\Vert f \Vert = \Vert \mathcal F f \Vert$, we find
\begin{align*}
 \int_G \int_{\ghat} \abs{\langle f, E_{\gamma}T_{\lambda} g \rangle }^2 \, d\mu_{\ghat}(\gamma)\, d\mu_{G}(\lambda) & = \int_G \int_{\ghat} \absbig{\mathcal F(f(\cdot)\overline{g(\cdot-\lambda)})(\gamma)}^2 \, d\mu_{\ghat}(\gamma)\, d\mu_{G}(\lambda) \\ & = \int_G \int_{G} \absbig{f(x)\overline{g(x-\lambda)}}^2 \, d\mu_{G}(x)\, d\mu_{G}(\lambda) \\
 & = \int_G \absbig{f(x)}^2  \int_{G} \absbig{ g(x-\lambda)}^2 \, d\mu_{G}(\lambda)\, d\mu_{G}(x) = \Vert f \Vert^2  \Vert g \Vert^2. 
\end{align*}
The same calculation holds for the Gabor system generated by $h$. We conclude that both Gabor systems are Bessel families. By Theorem \ref{th:dual-Gabor} the two Gabor systems 
$\{E_{\gamma}T_{\lambda}g\}_{\gamma\in\ghat,\lambda\in G}$ and $\{E_{\gamma}T_{\lambda}h\}_{\gamma\in\ghat,\lambda\in G}$ are dual frames for $L^2(G)$ if, and only if, for a.e. $x\in G$
\[ \int_G \overline{g(x-\lambda)}  h(x-\lambda) \, d\mu_G(\lambda) = \int_G \overline{g(\lambda)} h(\lambda) \, d\mu_G(\lambda) = \overline{\langle g, h\rangle } = 1, \]
that is, $\langle g, h\rangle = 1.$
This result is the well-known inversion formula for the short-time Fourier transform \cite{MR1601095,gr01}. 
\end{example}

\begin{example}
  Let $G=\Gamma=\R^n, \Lambda=\Z^n$ and $g\in L^2(\R^n)$. We equip $G$ and $\Gamma$ with the
  Lebesgue measure and $\Lambda$ with the counting measure.  Then
  \[
  \langle f_1,f_2\rangle = \int_{\R^n} \sum_{\lambda\in\Z^n} \langle f_1, E_{\gamma}T_{\lambda} g
  \rangle \langle E_{\gamma}T_{\lambda}g,f_2\rangle \, d\gamma, \quad \text{for all } f_1,f_2\in L^2(\R^n)
  \]
  if, and only if, \[\sum_{\lambda \in \Z^n} \abs{g(x-\lambda)}^2=1, \ \text{a.e. } x\in \R^n.\]
  Equivalently in the frequency domain, for all $\beta \in \Z^n$
  \[
  \int_{\R^n} \overline{\hat{g} (\omega+\gamma)}\hat{g}(\omega+\gamma+\beta) \, d\gamma =
  \delta_{\beta,0} \quad \text{a.e. } \omega\in\R^n.
  \] 
  From the time domain characterization, it is clear that the square-root of any uniform B-splines can be used to
  construct such functions $g$.
  The Gabor system with $\Lambda=\R^n$ and $\Gamma=\Z^n$ has similar characterizing equations, see \cite[Example 2.1(b)]{MR2066831}.
\end{example}

\begin{example} 
\label{ex:gabor-L2}
  Let $g,h\in L^2(\R)$ and $a,b>0$ be given. Take $\Lambda = a\Z$ and $\Gamma=b\Z$. We equip $\R$
  with the Lebesgue measure and $\Lambda^{\perp}\cong\tfrac{1}{a}\Z$,
  $\Gamma^{\perp}\cong\tfrac{1}{b} \Z$ with the counting measure. From this follows that the
  measure on $\Lambda$ and $\Gamma$ is the counting measure multiplied with $a$ and $b$
  respectively. Theorem
  \ref{th:dual-Gabor} now yields the following characterizing equation for dual Gabor systems in
  $L^2(\R)$: If $\{E_{\gamma}T_{\lambda}g\}_{\gamma\in\Gamma, \lambda\in\Lambda}$ and
  $\{E_{\gamma}T_{\lambda}h\}_{\gamma\in\Gamma, \lambda\in\Lambda}$ are Bessel sequences, then
  \[
  f = ab \sum_{\lambda\in a\Z} \sum_{\gamma\in b\Z} \langle f, E_{\gamma}T_{\lambda} g\rangle
  E_{\gamma}T_{\lambda} h, \quad \text{for all } f\in L^2(\R)
  \]
  if, and only if, for all $\alpha \in \tfrac{1}{b} \Z$
  \[
  \sum_{\lambda\in a\Z} \overline{g(x-\lambda)}  h(x-\lambda+\alpha) = \tfrac{1}{a}
  \delta_{\alpha,0} \quad \text{for a.e. } x\in [0,a].
  \]
  This result is equivalent to the characterization result by
  Janssen~\cite{MR1601115}. Higher dimensional versions can be derived similarly; see Ron and Shen~\cite{MR1460623} for alternative proofs. 
\end{example}

One can easily deduce characterization results for Gabor systems in $\ell^2(\Z^d)$ following the approach of the
preceding example. We refer to the work of Janssen~\cite{MR1601119} and Lopez and
Han~\cite{MR3091773} for direct proofs. Finally, we mention the following characterization for
finite and discrete Gabor frames.
\begin{example}
\label{ex:gabor-finite}
  Let $g,h\in \C^d$ and $a,b,d,N,M\in\N$ be such that $aN=bM=d$. Then
  \[ f = \sum_{m=0}^{M-1} \sum_{n=0}^{N-1} \langle f, E_{mb}T_{na}g \rangle E_{mb}T_{na}h, \quad \text{for all } f\in\C^d \]
  if, and only if,
 \[ 
 \sum_{k=0}^{N-1} \overline{g(x\!-\!nM\!-\!ka)}h(x\!-\!ka) = \frac{1}{M} \delta_{n,0} , \quad \forall
 x\!\in\!\{0,1,\ldots,a\!-\!1\}, n\in\{0,1,\ldots,b\!-\!1\}.
 \]
 This result appears first in\cite{ISI:A1995RF41200008} and has been rediscovered in, e.g., \cite{MR3019770}. 
\end{example}

\subsection{Wavelet and shearlet systems}
\label{sec:wavelets-and-shearlets}

Following \cite{BowRos2014}, we let $\epick{G}$ denote the semigroup of continuous group homomorphisms $\epickdila$ of $G$ onto $G$ with compact kernel. This semigroup can be viewed as an extension of the group of topological automorphisms on $G$; we define the extended \emph{modular function} $\Delta$ in $\epick{G}$ as in \cite[Section 6]{BowRos2014}. The isometric dilation operator $D_\epickdila: L^2(G) \to L^2(G)$ is then defined by
\[ D_\epickdila f(x) = \Delta{(\epickdila)}^{-1/2}f(\epickdila (x)).\]

 Let $\mathcal{A}$ be  a subset of $\epick{G}$, let $\Gamma$ be a co-compact subgroup of $G$, and let $\Psi$ be a subset of $L^2(G)$. The wavelet system generated by $\Psi$ is:
\[
  \mathrm{W}(\Psi,\mathcal{A},\Gamma):= \setprop{D_\epickdila T_\gamma \psi}{\epickdila \in \mathcal{A},\gamma \in \Gamma,\psi \in \Psi}.
\]
Depending on the choice of $\mathcal{A}$ and the structure of $\epick{G}$, it might be desirable to extend the wavelet system with translates of ``scaling'' functions, that is, $\setprop{T_\gamma \phi}{\gamma \in \Gamma, \phi \in \Phi}$ for some $\Phi \subset L^2(G)$. We denote this extension to a ``non-homogeneous'' wavelet system by $\mathrm{W}_h(\Psi,\Phi,\mathcal{A},\Gamma)$. If $\epick{G}$ only contains trivial group homomorphisms, \eg as in the case of $G=\Z$, it is possible to define the dilation operator on the dual group $\ghat$ via the Fourier transform.

The two wavelet systems introduced above offer a very general setup that include most of the usual wavelet-type systems in $L^2(\R^n)$, \eg  discrete and continuous wavelet and shearlet systems~\cite{Daubechies:1992:TLW:130655,MR2896273} as well as composite wavelet systems.

\begin{example}
  \label{ex:entire-group-wavelets}
 Let us consider the general setup as above, where we make the specific choice $\Gamma=G$ and $\Psi=\set{\psi_j}_{j \in J}$ for some index set $J \subset \Z$. For $\epickdila \in \mathcal{A}$ and $\gamma \in \Gamma=G$, we have
\[ D_\epickdila T_\gamma \psi_j (x)= \Delta(\epickdila)^{-1/2} \psi_j(\epickdila(x)-\gamma))=T_{\bar{\gamma}}D_\epickdila \psi_j(x) \]
for some $\bar{\gamma} \in \epickdila^{-1}\Gamma$ so that $\epickdila(\bar{\gamma})=\gamma$. It follows that $\mathrm{W}(\Psi,\mathcal{A},\Gamma)$ is a (generalized) translation invariant system for $\Gamma_j=G$ with $j\in J$ and $g_{j,p}=g_{j,\epickdila}=D_\epickdila \psi_j$ for $(j,p)=(j,\epickdila) \in J \times \mathcal{A}$. For simplicity we equip each measure space $P_j=\mathcal{A}$, $j \in J$, with the same measure; as usual we require that this measure $\mu_{\mathcal{A}}$ satisfies our standing hypotheses. Further, we define the adjoint of $\epickdila$ by $\hat{\epickdila}(\omega)=\omega \circ \epickdila$ for $\omega \in \ghat$. Using results from \cite{BowRos2014}, it follows that $\hat{\epickdila}$ is an isomorphism from $\ghat$ onto $(\ker {\epickdila})^\perp$ and that 
\[ \widehat{D_\epickdila f}(\omega) =
\begin{cases}
  \Delta(\epickdila)^{1/2} \hat{f}(\hat{\epickdila}^{-1}(\omega)) & \omega \in {(\ker \epickdila)}^\perp, \\
 0 & \text{otherwise}.
\end{cases}
\]

As translation invariant systems always satisfy the local integrability condition, we immediately have that  $\mathrm{W}(\Psi,\mathcal{A},G)$ is a Parseval frame, that is,
\[ f = \sum_{j\in J} \int_{\mathcal{A}} \int_{G} \innerprod{f}{D_\epickdila T_\gamma \psi_j} D_\epickdila T_\gamma \psi_j \, d\mu_{G}(\gamma) \, d\mu_{\mathcal{A}}(\epickdila) \quad \text{for all } f \in L^2(G), \]
if, and only if, for a.e. $\omega \in \ghat$, 
\begin{equation}
 t_0 =\sum_{j\in J} \int_{\mathcal{A}} \absbig{\widehat{D_\epickdila \psi_j}(\omega)}^2 d\mu_{\mathcal{A}}(\epickdila)= \sum_{j\in J}\int_{\{\epickdila \in \mathcal{A} \, : \, \omega \in (\ker \epickdila)^\perp\}} \Delta(\epickdila) \absbig{\hat \psi_j(\hat{\epickdila}^{-1}(\omega))}^2 d\mu_{\mathcal{A}}(\epickdila) = 1 .\label{eq:gen-Calderon} 
\end{equation}
In particular, it follows that $\mathrm{W}(\Psi,\mathcal{A},G)$ cannot be a Parseval frame for $L^2(G)$ regardless of the measure $\mu_\mathcal{A}$ if $\ghat \setminus \cup_{\epickdila \in \mathcal{A}} (\ker \epickdila)^\perp$ has non-zero measure.

The Calder\'on admissibility condition~(\ref{eq:t-alpha-cont-wavelet-intro}) is a special case of
(\ref{eq:gen-Calderon}).  To see this, take $G=\R$ and consider the dilation group
$\mathcal{A}=\setprop{x \mapsto a^{-1} x}{a \in\R \setminus\set{0}}$ with measure
$\mu_{\mathcal{A}}$ defined on the Borel algebra on $\R \setminus\set{0}$ by
$d\mu_{\mathcal{A}}(a)=da/a^2$, where $da=d\lambda(a)$ denotes the Lebesgue measure. Higher
dimensional versions of Calder\'on's admissibility condition are obtained similarly, see also
\cite{MR2652610,MR1881293}.
\end{example}

\begin{example}
  \label{ex:wavelets-in-Rn}
  We consider wavelet systems in $L^2(\R^n)$ with discrete dilations and semi-continuous
  translations. Let $A \in \textrm{GL}(n,\R)$ be a matrix whose eigenvalues are strictly larger than
  one in modulus, set $\mathcal{A}=\setprop{x\mapsto A^jx}{j \in \Z}$, and let $\Gamma$ be a
  co-compact subgroup of $\R^n$. The wavelet system generated by $\Psi=\{\psi_\ell\}_{\ell=1}^L
  \subset L^2(G)$ is given by
  \[ \mathrm{W}(\Psi,A,\Gamma):=\setprop{D_{A^j}T_\gamma
    \psi_\ell=\abs{\det{A}}^{-j/2}\psi_\ell(A^{-j}\cdot-\gamma)}{\ell=1,\dots,L,j \in \Z, \gamma \in
    \Gamma}.\] Any co-compact subgroup of $\R^n$ is of the form $\Gamma=P(\Z^k \times \R^{n-k})$ for
  some $k \in \set{0,1,\dots,n}$ and $P \in \mathrm{GL}(n,\R)$. Since
  $\mathrm{W}(\{\psi\},A,\Gamma)$ is unitarily equivalent to
  $\mathrm{W}(\{D_{P^{-1}}\psi\},P^{-1}AP,\Z^k \times \R^{n-k})$ we can without loss of generality
  assume that $P=I_n$, \ie $\Gamma=\Z^k \times \R^{n-k}$.

  Clearly, $\mathrm{W}(\Psi,A,\Gamma)$ is a generalized translation invariant system for
  $\Gamma_j=A^{j}\Gamma$ with $j\in J:=\Z$ and $g_{j,\ell}=D_{A^j} \psi_\ell$, where
  $P_j=\set{1,\dots,L}$. To get rid of a scaling factor in the representation formula, we will use
  $\mu_{P_j}=\frac{1}{\abs{\det{A}}^j} \mu_c$ as measure on $P_j=\set{1,\dots,L}$, where $\mu_c$
  denotes the counting measure. The standing assumptions are clearly satisfied. Moreover, the local
  integrability condition is known to be equivalent to local integrability on $\R^n\setminus \{0\}$
  of the Calder\'on sum \cite[Proposition 2.7]{MR2746669} and can, therefore, be omitted from the
  characterization results.  It follows that two Bessel families $\mathrm{W}(\Psi,A,\Gamma)$ and
  $\mathrm{W}(\Phi,A,\Gamma)$ are dual frames if, and only if, with $B=A^T$,
  \[
  t_\alpha(\omega) = \sum_{l=1}^L \sum_{\sumstyle{j\in\Z}{\alpha \in B^j(\Z^k \times \{0\}^{n-k})}}
  \hat{\psi}_l(B^{-j}\omega) \overline{\hat{\phi}_l(B^{-j}(\omega+\alpha))} =\delta_{\alpha,0}
  \quad\text{for \almoste\ }\omega \in \R^n,
 \] 
 for all $\alpha \in \Z^k \times \{0\}^{n-k}$. For $k=n$ this result was obtained in
 \cite{MR1891728}, extending the work of Gripenberg~\cite{MR1338828} and Wang~\cite{MR2692675}.
\end{example}

\begin{example}
  \label{sec:shearlets}
  Let us finally consider the cone-adapted shearlet systems. For brevity we
  restrict our findings to the non-homogeneous, continuous shearlet
  transform in dimension two. Let
  \[ A_a =
  \begin{pmatrix}
    a & 0 \\ 0 & a^{1/2}
  \end{pmatrix} \quad \text{and} \quad S_s =
  \begin{pmatrix}
    1 & s \\ 0 & 1
  \end{pmatrix}
  \]
  for $a\neq 0$ and $s \in \R$.  For $\psi \in L^2(\R^2)$ define 
  \[ \psi_{ast}(x):=a^{-3/4} \psi(A_a^{\, -1} S_s^{\, -1}(x-t)  ) =T_t
  D_{S_s A_a} \psi.  \] The cone-adapted continuous shearlet system
  $\mathrm{S}_h(\phi,\psi,\tilde{\psi})$ is then defined as the
  collection:
  \begin{multline*}
    \mathrm{S}_h(\phi,\psi_1,\psi_2) = \setpropbig{T_t \phi}{t \in \R^2}
    \cup \setprop{T_t D_{S_s A_a} \psi_1}{a \in \itvoc{0}{1},
      \abs{s}\le 1+a^{1/2},t \in \R^2} \\ \cup \setprop{T_t
      D_{\tilde{S}_s \tilde{A}_a} \psi_2}{a \in \itvoc{0}{1},
      \abs{s}\le 1+a^{1/2},t \in \R^2},
  \end{multline*}
  where $\tilde{S}_s=S_s^{\,T}$ and
  $\tilde{A}_a=\diag{a^{1/2},a}$. This is a special case of the system $\mathrm{W}_h$ introduced above. More importantly, this is a GTI system. To see
  this claim, take $J =\set{0,1}$ and $\Gamma=\Gamma_{\! j}=\R^2$ for
  $j \in J$. Define $P_0=\set{0}$ and let $\mu_{P_0}$ be the counting
  measure on $P_0$. Define
  \[ P_1 = \setprop{(a,s)\in \R^2}{a \in \itvoc{0}{1}, \abs{s}\le
    1+a^{1/2}},\] and let $\mu_{P_1}$ be some measure on $P_1$ so that
  our standing hypotheses are satisfied. The generators are
  $g_{0,p}=g_{0,0}=\phi$ for $p=0\in P_0$ and
  $g_{1,p}=g_{1,(a,s)}=D_{\tilde{S}_s \tilde{A}_a} \psi$ for
  $p=(a,s)\in P_1$. This proves our claim. By
  Theorem~\ref{th:TI-systems} and Lemma~\ref{th:TI-systems-along-G} we
  immediately have that, if $\mathrm{S}_h(\phi,\psi_1,\psi_2)$ and
  $\mathrm{S}_h(\phi,\tilde \psi_1,\tilde\psi_2)$ are Bessel families,
  then they are dual frames if, and only if,
  \begin{multline}
    \overline{\hat\phi(\omega)}\hat\phi(\omega) + \int_{P_1}a^{3/2}
    \overline{\hat{\psi}_1(A_aS_s^T\omega)}
    \hat{\tilde{\psi}}_1(A_aS_s^T\omega) \, d\mu_{P_1}(a,s)\\ +
    \int_{P_1}a^{3/2} \overline{\hat{\psi}_2(\tilde A_a \tilde
      S_s^T\omega)} \hat{\tilde{\psi}}_2(\tilde A_a \tilde
    S_s^T\omega) \, d\mu_{P_1}(a,s) = 1 \qquad \text{for a.e.\ $\omega
      \in \R^2$}.
    \label{eq:shearlet}
  \end{multline}
  A standard choice for the measure $\mu_{P_1}$ in~(\ref{eq:shearlet})
  is $d\mu_{P_1}(a,s)=\frac{dads}{a^3}$, which comes from the
  left-invariant Haar measure on the shearlet group. The above
  characterization result generalizes results from \cite{MR2471937,MR2803946,MR2861594}.
\end{example}

\subsection{Other examples}
\begin{example} 
\label{ex:p-adic}
In this example we consider the additive group of $p$-adic integers $\mathbb{I}_p$. To introduce
this group, we first consider the $p$-adic numbers $\Q_p$. Here $p$ is some fixed prime-number. The
$p$-adic numbers are the completion of the rationals $\Q$ under the $p$-adic norm, defined as
follows. Every non-zero rational $x$ can be uniquely factored into $x=\tfrac{r}{s} p^n$, where
$r,s,n\in\Z$ and $p$ does not divide $r$ nor $s$. We then define the $p$-adic norm of $x$ as $\Vert
x \Vert_p =p^{-n}$, additionally $\Vert 0 \Vert_p := 0$. The $p$-adic numbers $\Q_p$ are the
completion of $\Q$ under $\Vert\!\cdot\!\Vert_p$. It can be shown that all $p$-adic numbers $x$ can
be written uniquely as
\begin{equation} 
  \label{eq:p-adic-numbers} 
  x = \sum_{j=k}^{\infty} x_jp^j,
\end{equation}
where $x_k\in \{0,1,\dots, p-1\}$ and $k\in\Z$, $x_k\ne 0$. The set of all numbers $x\in\Q_p$
for which $x_j=0$ for $j<0$ in \eqref{eq:p-adic-numbers} are the $p$-adic
integer $\mathbb{I}_p$. Equivalently, $\mathbb{I}_p = \{ x\in\Q_p \, : \, \Vert x \Vert_p \le 1\}$. In fact,
$\mathbb{I}_p$ is a compact, closed and open subgroup of $\Q_p$.  Its dual group $\widehat{\mathbb{I}}_p$ can be
identified with the Pr\"ufer $p$-group $\Z(p^{\infty})$, which consists of the union of the
$p^n$-roots of unity for all $n\in\N$. That is,
\begin{equation*} 
  \widehat{ \mathbb{I}_p} \cong \Z(p^{\infty}) := \{ e^{2\pi i m/p^n} \, : \, n\in\N,m\in\{0,1,\ldots, p^n-1\}\, \} \subset \C.
\end{equation*}
We equip $\Z(p^{\infty})$ with the discrete topology and multiplication as group operation.  For
more information on $p$-adic numbers and their dual group we refer to, \eg \cite[\S 10, \S 25]{MR0156915}. For
$n\in\N$ consider now the subgroups $\Gamma_{\! n}^{\perp} = \{ e^{2\pi i m/p^n} \, : \,
m=0,1,\ldots,p^n-1\} \subset \Z(p^{\infty})$.  Note that all $\Gamma_{\! n}^{\perp}$ are finite groups
of order $p^n$ and generated by $e^{2\pi i /p^n}$. Moreover, all $\Gamma_{\! n}^{\perp}$ are nested so that
\[ 
1 \subset \Gamma_{\! 1}^{\perp} \subset \Gamma_{\! 2}^{\perp} \subset \, \cdots \, \subset
\Z(p^{\infty}). 
\] 
Let now $\{g_n\}_{n\in\N} \subset L^2(\mathbb{I}_p)$. By Theorem~\ref{th:compact-G-dual} the
generalized translation invariant system $\{ T_{\gamma} g_n\}_{\gamma\in\Gamma_{\! n},n\in\N}$ is a
Parseval frame for $L^2(\mathbb{I}_p)$ if, and only if, for each $\alpha\in \bigcup_{n\in\N}
\Gamma_{\! n}^{\perp} = \Z(p^{\infty})$
\[ 
\sum_{k=n^*}^{\infty} \overline{ \hat{g}_n(\omega) } \hat{g}_n(\omega\alpha) = \delta_{\alpha,1}
\quad \text{for all } \omega\in \Z(p^{\infty}), 
\]
where $n^*\in\N$ is the smallest natural number such that $\alpha\in \Gamma_{\! n^*}^{\perp}$.
Because we consider a GTI system with countably many generators, the standing hypotheses are
trivially satisfied, see Section \ref{sec:gsi}.

Returning to the $p$-adic numbers $\Q_p$, we note that the only co-compact subgroup of $\Q_p$ is
$\Q_p$ itself \cite{BowRos2014}. Therefore any GTI system in $L^2(\Q_p)$ is, in fact, a translation
invariant system of the form $\cup_{j \in J}\set{T_\gamma g_{p}}_{\gamma \in \Q_p, p \in P_j}$. The
equations characterizing the dual frame property of such systems are immediate from
Theorem~\ref{th:TI-systems} and Lemma~\ref{th:TI-systems-along-G}.

Finally, in the product group $\Q_p \times \mathbb{I}_p$ there are no discrete, co-compact
subgroups~\cite{BowRos2014}, and thus no generalized shift invariant systems for $L^2(\Q_p \times
\mathbb{I}_p)$ can be constructed. However, any subgroup of the form $\Q_p \times \Gamma_{\! n}$,
where $\Gamma_{\! n}$ is a co-compact subgroup of $\mathbb{I}_p$ as before, is a co-compact subgroup
in $\Q_p\times\mathbb{I}_p$, indicating that a large number of generalized translation invariant
systems do exist in $L^2(\Q_p\times \mathbb{I}_p)$.
\end{example}

In order to apply Theorem \ref{th:2001e} to a given GTI system, one needs to verify that the (dual)
$\alpha$-LIC or the stronger LIC holds. By Theorem \ref{th:TI-systems} we get this for free for
translation invariant systems. For regular wavelet systems as in Example~\ref{ex:wavelets-in-Rn} the
LIC has an easy characterization~\cite[Proposition 2.7]{MR2746669}. For certain irregular wavelet
systems over the real line a detailed analysis of the LIC has been carried out in \cite{MR2211335}
using Beurling densities. However, for general GTI systems there is no simple interpretation of the
local integrability conditions.

\section*{Acknowledgments}
We thank O.\ Christensen for giving access to an early version of \cite{OleSaySon2014} and K.\ Ross
for providing an example of an LCA group with no uniform lattices, but with proper co-compact
subgroups. 

\def\cprime{$'$} \def\cprime{$'$} \def\cprime{$'$}
  \def\uarc#1{\ifmmode{\lineskiplimit=0pt\oalign{$#1$\crcr
  \hidewidth\setbox0=\hbox{\lower1ex\hbox{{\rm\char"15}}}\dp0=0pt
  \box0\hidewidth}}\else{\lineskiplimit=0pt\oalign{#1\crcr
  \hidewidth\setbox0=\hbox{\lower1ex\hbox{{\rm\char"15}}}\dp0=0pt
  \box0\hidewidth}}\relax\fi} \def\cprime{$'$} \def\cprime{$'$}
  \def\cprime{$'$} \def\cprime{$'$} \def\cprime{$'$} \def\cprime{$'$}

\end{document}